\documentclass[a4paper,10pt]{article}
\usepackage[utf8]{inputenc}
\usepackage{amsmath}
\usepackage{amsfonts}
\usepackage{amssymb}
\usepackage{cite}
\usepackage{enumerate}
\usepackage{fancybox}
\usepackage{fullpage}

\RequirePackage{hyperref}
\PassOptionsToPackage{pdftex,bookmarksopen,bookmarksnumbered}{hyperref}
\voffset=-0.5in
  %
\usepackage{hyperref}
\usepackage{fullpage}

%
%
\usepackage{amsthm} 
\newtheorem{thm}{Theorem}[section]

\newtheorem{lemma}[thm]{Lemma}
\newtheorem{proposition}[thm]{Proposition}
\newtheorem{cor}[thm]{Corollary}
\newtheorem{corollary}[thm]{Corollary}
\newtheorem{propn}[thm]{Proposition}
\newtheorem{defn}[thm]{Definition}
\newtheorem{definition}[thm]{Definition}
\newtheorem{eg}[thm]{Example}

\newtheorem{remark}[thm]{Remark}

\renewcommand{\Bbb}{\mathbb{B}}

\newcommand{\Ebb}{\mathbb{E}}

\newcommand{\Nbb}{\mathbb{N}}

\newcommand{\Rbb}{\mathbb{R}}

\newcommand{\Xbb}{\mathbb{X}}
\newcommand{\Ybb}{\mathbb{Y}}


\newcommand{\xhat}{{\widehat{x}}}



\newcommand{\ubar}{{\overline{u}}}
\newcommand{\vbar}{{\overline{v}}}
\newcommand{\wbar}{{\overline{w}}}
\newcommand{\xbar}{{\overline{x}}}
\newcommand{\ybar}{{\overline{y}}}


\newcommand{\xtilde}{{\widetilde{x}}}

\newcommand{\varepsilontilde}{{\widetilde{\varepsilon}}}



%

\newcommand{\Euclid}{\mathbb{E}}

\newcommand{\paren}[1]{\left(#1\right)}

\renewcommand{\equiv}{:=}

\newcommand{\Ball}{{\Bbb}}

\newcommand{\Rtw}{{\Rbb^2}}

\newcommand{\set}[2]{\left\{#1\,\left|\,#2\right.\right\}}
\newcommand{\map}[3]{#1:\,#2\rightarrow #3\,}
\newcommand{\mmap}[3]{#1:\,#2\rightrightarrows #3\,}

\newcommand{\ip}[2]{\left\langle #1,~ #2\right\rangle}

\newcommand{\norm}[1]{\left\|#1\right\|}

\newcommand{\und}{\quad\mbox{and}\quad}



\DeclareMathOperator{\Id}{Id}

\DeclareMathOperator{\argmin}{argmin\,}

\DeclareMathOperator{\dist}{dist}

\DeclareMathOperator{\cone}{{cone}}

\DeclareMathOperator{\gph}{gph}


\DeclareMathOperator{\Fix}{\mathsf{Fix}\,}
\DeclareMathOperator{\epi}{{\rm epi}}

\newcommand{\ncone}[1]{{N}_{#1}}

\newcommand{\fncone}[1]{{\widehat{N}}_{#1}}
\newcommand{\pncone}[1]{N^{\text{\rm prox}}_{#1}} 



\usepackage[shortlabels]{enumitem}
\newcommand {\R} {\mathbb R}

\newcommand {\B} {\mathbb B}

\newcommand{\bx}{\overline x}

\newcommand {\bd} {{\rm bd}\,}

\newcommand{\sr}{{\rm sr}[A,B](\bx)}

\newcommand{\srr}{{\rm sr}'[A,B](\bx)}
\newcommand{\srrt}{{\rm sr}'[A,B](\xtilde)}

\newcommand{\TO}[1]{\stackrel{#1}{\to}}
\newcommand {\Limsup} {\mathop{{\rm Lim\,sup}\,}}

\title{Necessary conditions for linear convergence of iterated expansive, set-valued mappings with application to alternating projections}
\author{
D. Russell Luke\thanks{Institut f\"ur Numerische und Angewandte Mathematik,
Universit\"at G\"ottingen,
37083 G\"ottingen, Germany. DRL was supported in part by German Israeli Foundation Grant G-1253-304.6 and
Deutsche Forschungsgemeinschaft Research Training Grant 2088 TP-B5.
E-mail:  \texttt{r.luke@math.uni-goettingen.de}}
,~
Marc Teboulle\thanks{School of Mathematical Sciences, Tel Aviv University,
Tel Aviv 69978, Israel. MT was supported by German Israeli Foundation Grant G-1253-304.6 and Israel Science Foundation ISF Grant 1844-16.
E-mail:  \texttt{teboulle@post.tau.ac.il.}}
~and 
Nguyen H. Thao\thanks{Delft Center for Systems and Control, Delft University of Technology, 2628CD Delft, The Netherlands.
Department of Mathematics, Teacher College, Cantho University,
Cantho, Vietnam. NHT was supported by German Israeli Foundation Grant G-1253-304.6.
E-mail:  \texttt{h.t.nguyen-3@tudelft.nl, h.nguyen@math.uni-goettingen.de, nhthao@ctu.edu.vn}}
}
\date{April 28, 2017}
\begin{document}

\maketitle

\begin{abstract}
We present necessary conditions for monotonicity, in
one form or another, of  fixed point iterations  of mappings that violate the usual nonexpansive property.
We show that most reasonable notions of linear-type
monotonicity of fixed point sequences imply {\em metric subregularity}.
This is specialized to the alternating projections iteration where
the metric subregularity property takes on a distinct geometric characterization of sets at
points of intersection called {\em subtransversality}.  Our more general results for fixed point iterations are specialized to establish the necessity
of subtransversality for consistent feasibility with a number of reasonable types of sequential monotonicity, under
varying degrees of assumptions on the regularity of the sets.
\end{abstract}

{\small \noindent {\bfseries 2010 Mathematics Subject
Classification:} {Primary 49J53, 65K10
Secondary 49K40, 49M05, 49M27, 65K05, 90C26.\\
}}

\noindent {\bfseries Keywords:}
Almost averaged mappings, averaged operators, calmness, cyclic projections,
elemental regularity, feasibility, fixed points, fixed point iteration,
metric regularity,  metric subregularity, nonconvex, nonexpansive,
subtransversality, transversality

\section{Introduction}
In recent years there has been a lot of progress in determining ever weaker conditions to guarantee
local linear convergence of elementary fixed point algorithms, with particular attention given
to the method of alternating projections and the Douglas-Rachford iteration
\cite{LewisMalick08, LewLukMal09, DruIofLew15, HesLuk13, BauLukePhanWang13b, NolRon16, Phan16,
LukNguTam17, Giselsson15}.
These works beg the question:  what are necessary conditions for linear convergence?  We
shed some light on this question for iterations generated by not necessarily nonexpansive fixed point mappings and show how our theory
specializes for the alternating projections iteration in nonconvex and convex settings.

Our work builds upon the terminology and theoretical framework established in \cite{LukNguTam17}.
As much as possible, we have tried to make the present analysis self-contained, but it is not
possible to reproduce all the results taken from \cite{LukNguTam17}.
After introducing basic notation and definitions in Section \ref{s:notation},  we clarify first what
we mean by linear convergence, since there are many ways in which
a sequence can behave linearly with respect to the set of fixed points.  We introduce a generalization of
Fej\'er monotonicity, namely {\em linear monotonicity} (Definition \ref{d:mu_Mon}) which is central
to our development.   We also introduce
another generalization, {\em linearly extendible sequences} in
Definition \ref{Uni_Lin_Con_to_point}, that concerns sequences which can be viewed as the subsequence
of some monotone sequence.  This is key to the application to alternating projections studied in Sections
\ref{s:AP_for_NonConvex} and \ref{s:AP_with_Convex}.    In
Section \ref{s:lmfp} we lay the groundwork for the first main result on necessary
conditions for linearly monotone fixed point iterations with respect
to $\Fix T$ (Theorem \ref{t:Nec_for_msr}).  The result states that {\em metric subregularity}
(Definition \ref{d:(str)metric (sub)reg}) is necessary for linearly monotone fixed point iterations.
If in addition the fixed point operator $T$ is {\em almost averaged} at points in $\Fix T$ (Definition \ref{d:pane/paa}), then
metric subregularity is necessary for {\em linear convergence} of the iterates to
a point in $\Fix T$ (Corollary \ref{t:nec_suff}).
Sections \ref{s:AP_for_NonConvex} and \ref{s:AP_with_Convex} are specializations to the case of alternating
projections for {\em consistent feasibility}.  In this setting metric subregularity takes on the more directly geometric interpretation as
{\em subtransversality} of the sets at common points (Definition \ref{d:MAINi}).  Theorem \ref{NonCon_Nec1+} establishes the
necessity of subtransversality for alternating projections iterations to be linearly monotone with respect to common points.
Corollary \ref{Nec_Suf_Sub} then shows that for sets with a certain {\em elemental subregularity} (Definition \ref{d:set regularity})
subtransversality is necessary and sufficient for linear monotonicity of the sequence.  For sequences that are R-linearly
convergent to a fixed point and satisfy a subsequential linear monotonicity property (condition \eqref{k_1}),
Theorem \ref{NonCon_Nec2} shows that subtransversality is also necessary.
Subtransversality is also shown to be necessary
for sequences to have linearly extendible subsequences (Theorem \ref{NonCon_Nec1+_part2}).  These results
correspond to our observation in Proposition \ref{Sub_appear} that
subtransversality has appeared in one form or another in all sufficient conditions for linear monotonicity or convergence of
alternating projections for consistent feasibility that have appeared previously in the literature.
In Section \ref{s:AP_with_Convex}
these results are further specialized to the case of convex feasibility.  We show in Theorems \ref{BasThe_Loc} and
\ref{BasThe_Glo} that metric subregularity of some form is necessary and sufficient for local and global linear convergence
of alternating projections.  Moreover, we show in Proposition \ref{u_R} that R-linear convergence of the sequences in this setting is equivalent to
linear monotonicity of the sequence with respect to points of intersection. For
 Q-linear convergence, we show that
linear extendability is necessary (Proposition \ref{Q_R}).

Based on the results obtained here we conjecture that, for alternating projections applied to {\em inconsistent} feasibility,
subtransversality as extended in \cite[Definition 3.2]{KruLukNgu17} is also necessary for R-linear convergence of the 
iterates to fixed points.

\section{Notation and basic definitions}\label{s:notation}
Throughout our discussion $\Euclid$ is a Euclidean space.
Given a subset $A\subset\Euclid$, $\dist(x,A)$ stands for the distance from a point $x\in \Euclid$ to $A$:
$\dist(x,A) := \inf_{a\in A}\|x-a\|$.
The {\em projector} onto the set $A$, $\mmap{P_{A}}{\Euclid}{A}$, is central to algorithms for feasibility and is defined by
\[
P_{A}x\equiv \underset{a\in A}{\argmin}\norm{a-x}.
\]
A {\em projection} is a selection from the projector.
This exists for any closed set $ A\subset\Euclid$, as can be deduced by the continuity and coercivity of the norm.
Note that the projector is not, in general, single-valued, and indeed uniqueness of the projector defines a type of 
regularity of the set $A$:
local uniqueness characterizes {\em prox-regularity} \cite{PolRocThi00}  while in finite dimensional settings 
global uniqueness characterizes
convexity \cite{Bunt}.

Given a subset $A\subset\mathbb{E}$ and a point $\bx\in  A$, the
\emph{Fr\'echet, proximal and limiting normal cones} to $ A$ at $\bx$ are defined, respectively, as follows:
\begin{gather*}
\fncone{A}(\xbar):= \left\{v\in\Euclid\mid \limsup_{x\TO{A}\xbar,\,x\neq \xbar} \frac{\langle v,x-\xbar \rangle}{\|x-\xbar\|} \leq 0 \right\},
\\
\pncone{A}(\bx):=\cone\left(P_{A}^{-1}(\bx)-\bx\right),
\\
\ncone{A}(\xbar):= \Limsup_{x\TO{A}\xbar} \pncone{A}(x):
=\left\{v=\lim_{k\to\infty}v_k\mid v_k\in \pncone{A}(x_k),\; x_k\TO{A} \xbar \right\}.
\end{gather*}
In the above, $x\TO{ A} \bx$ means that $x\to \bx$ with $x\in  A$.

Our other basic notation is standard; cf. \cite{Mord06,VA,DontRock09}.
The open unit ball
in a Euclidean space
is denoted $\B$.
$\B_\delta(x)$ stands for the open ball with radius $\delta>0$ and center $x$;  $\B_\delta$
  is the open ball of radius $\delta$ centered at the origin.

To quantify convergence of sequences and fixed point iterations, we focus primarily on 
linear convergence, though sublinear convergence
can also be handled in this framework.
Linear convergence, however, can come in many forms.  We list the more common notions next.

\begin{defn}[R- and Q-linear convergence to points, Chapter 9 of \cite{OrtegaRheinboldt70}]\label{d:q-r-lc}
Let $(x_k)_{k\in\Nbb}$ be a sequence in $\Ebb$.
\begin{enumerate}
\item $(x_k)_{k\in\Nbb}$ is said to \emph{converge R-linearly} to $\xtilde$ with rate $c\in [0,1)$ 
if there is a constant $\gamma>0$ such that
\begin{align}\label{LinCon_Seq}
\norm{x_k-\xtilde} \le \gamma c^k\quad \forall k\in \Nbb.
\end{align}
\item $(x_k)_{k\in\Nbb}$ is said to \emph{converge Q-linearly} to $\xtilde$ with rate $c\in [0,1)$ if
\[
\norm{x_{k+1}-\xtilde} \le c\norm{x_{k}-\xtilde}\quad \forall k\in \Nbb.
\]
\end{enumerate}
\end{defn}

By definition, Q-linear convergence implies R-linear convergence with the same rate. Elementary examples show
that the inverse implication does not hold in general.

One of the central concepts in the convergence of sequences is {\em Fej\'er monotonicity}:
a sequence $(x_k)_{k\in\Nbb}$ is \emph{Fej\'er monotone} with respect to a nonempty convex set $\Omega$ if
\[
\|x_{k+1}-x\| \le \|x_k-x\|\quad \forall x\in \Omega,\, \forall k\in \mathbb{N}.
\]
\noindent In the context of convergence analysis of fixed point iterations,
  the following generalization of Fej\'er monotonicity of sequences is central.

\begin{defn}[$\mu$-monotonicity]\label{d:mu_Mon}
Let $(x_k)_{k\in\Nbb}$ be a sequence on $\Ebb$, $\Omega\subset\Ebb$ be nonempty and
$\map{\mu}{\Rbb_+}{\Rbb_+}$ satisfy $\mu(0)=0$ and
\begin{align*}
&\mu(t_1)<\mu(t_2)\leq t_2\; \mbox{ whenever }\; 0\le t_1<t_2.
\end{align*}
\begin{enumerate}[(i)]
 \item $(x_k)_{k\in \Nbb}$ is said to be
 \emph{$\mu$-monotone with respect to $\Omega$}  if
 \begin{equation}\label{e:mu-uniform mon}
 \dist(x_{k+1}, \Omega)\leq \mu\paren{\dist(x_k, \Omega)}\quad  \forall k\in \Nbb .
 \end{equation}
 \item $(x_k)_{k\in \Nbb}$ is said to be
 \emph{linearly monotone with respect to $\Omega$}  if \eqref{e:mu-uniform mon} is
 satisfied for $\mu(t)=c\cdot t$ for all $t\in \Rbb_+$ and some constant $c\in [0,1]$.
\end{enumerate}
\end{defn}
The focus of our study is linear convergence, so only linear monotonicity will come into play
in what follows.  A study of other kinds of convergence, particularly {\em sublinear}, would
employ the full generality of $\mu$-monotonicity.

The next result is clear.

\begin{propn}[Fej\'er monotonicity implies $\mu$-monotonicity]\label{t:Fejer-uniform}
 If the sequence $(x_k)_{k\in \Nbb}$ is Fej\'er monotone with respect to $\Omega\subset\Xbb$
 then it is $\mu$-monotone with respect to $\Omega$ with $\mu=\Id$.
\end{propn}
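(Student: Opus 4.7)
The plan is to derive the distance inequality \eqref{e:mu-uniform mon} with $\mu = \Id$ directly from Fej\'er monotonicity by taking an infimum over $\Omega$ in the pointwise Fej\'er inequality. Before doing that, I would first verify that $\mu = \Id$ is an admissible choice in Definition \ref{d:mu_Mon}: clearly $\Id(0) = 0$, and for any $0 \le t_1 < t_2$ we have $\Id(t_1) = t_1 < t_2 = \Id(t_2) \le t_2$, so all three requirements placed on $\mu$ are satisfied.

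Next, fix an arbitrary $k \in \Nbb$. By the hypothesis of Fej\'er monotonicity with respect to $\Omega$,
\[
\|x_{k+1} - x\| \le \|x_k - x\| \qquad \forall x \in \Omega.
\]
Taking the infimum over $x \in \Omega$ on both sides, and using the elementary fact that $a(x) \le b(x)$ for all $x$ implies $\inf_x a(x) \le \inf_x b(x)$, yields
\[
\dist(x_{k+1}, \Omega) \le \dist(x_k, \Omega) = \Id\bigl(\dist(x_k, \Omega)\bigr),
\]
which is precisely \eqref{e:mu-uniform mon} with $\mu = \Id$. Since $k$ was arbitrary, the sequence is $\mu$-monotone with respect to $\Omega$ with $\mu = \Id$, as claimed.

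There is essentially no obstacle here; the proposition is a direct unpacking of the two definitions, and the only detail worth remarking on is the admissibility check for $\mu = \Id$, since Definition \ref{d:mu_Mon} places a monotonicity-with-strict-inequality requirement on $\mu$ that the identity satisfies but that excludes, for example, constant functions. Note also that convexity of $\Omega$, which is part of the definition of Fej\'er monotonicity quoted in the excerpt, plays no role in the argument — only the pointwise inequality over $\Omega$ is used.
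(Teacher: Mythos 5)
Your proof is correct and is exactly the argument the paper has in mind (the paper states the proposition without proof, calling it ``clear''): one simply takes the infimum over $x\in\Omega$ in the Fej\'er inequality and checks that $\mu=\Id$ satisfies the admissibility conditions of Definition \ref{d:mu_Mon}. Nothing further is needed.
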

The converse is not true, as the next example shows.

\begin{eg}[$\mu$-monotonicity is not Fej\'er monotonicity]
 Let $\Omega\equiv \set{(x,y)\in\Rtw}{y\leq 0}$ and consider the sequence
 $x_k:=\paren{1/2^{k},1/2^{k}}$ for all $k\in \Nbb$.
This sequence is linearly monotone with respect to $\Omega$ with constant $c=1/2$,
but not Fej\'er monotone since $\|x_{k+1}-(2,0)\|>\|x_{k}-(2,0)\|$ for all $k$.
\end{eg}	

The next definition will come into play in Sections \ref{s:AP_for_NonConvex} and
\ref{s:AP_with_Convex}.  It provides a way to
analyze fixed point iterations  which, like our main example of alternating projections,
are compositions of mappings.

The subset $\Lambda\subset \Euclid$ appearing in Definition \ref{Uni_Lin_Con_to_point} and throughout
this work is always assumed to be closed and nonempty.
We use this set to isolate specific elements of the fixed point set
(most often restricted to affine subspaces).  This is more
than just a formal generalization since in some concrete situations
the required assumptions do not hold on $\Ebb$ but they do hold on
relevant subsets.

\begin{definition}[linearly extendible sequences]\label{Uni_Lin_Con_to_point}
A sequence $(x_k)_{k\in\Nbb}$ on $\Lambda\subset\Ebb$ is said to be
\emph{linearly extendible} on $\Lambda$ with frequency $m\ge 1$ ($m\in \Nbb$ is fixed) and rate $c\in [0,1)$ if
there is a sequence $(z_k)_{k\in\Nbb}$ on $\Lambda$
such that $x_k=z_{mk}$ for all $k\in \mathbb{N}$ and the following conditions are satisfied for all $k\in \Nbb$:
\begin{align}\label{LinCon_Seq+'_m}
\|z_{k+2}-z_{k+1}\|\; &\le\; \|z_{k+1}-z_{k}\|,
\\ \label{LinCon_Seq+'_m2}
\|z_{m(k+1)+1}-z_{m(k+1)}\|\; &\le\; c\|z_{mk+1}-z_{mk}\|.
\end{align}

When $\Lambda=\Ebb$, the quantifier ``on $\Lambda$'' is dropped.
\end{definition}
The requirement on the linear extension sequence $(z_k)_{k\in\Nbb}$ means
that the sequence of the distances between its two consecutive iterates
 is uniformly non-increasing and possesses a subsequence of type
$(\|z_{mk+1}-z_{mk}\|)_{k\in \Nbb}$ that converges Q-linearly with a global rate
to zero.

The extension of sequences of fixed point iterations
  $(x_k)_{k\in\Nbb}$ will most often be to the intermediate
points generated by the composite mappings.  In the case of alternating projections this is
$z_{2k}\equiv x_k\in P_AP_Bx_{k-1}$, and $z_{2k+1}\in P_B z_{2k}$. This strategy of analyzing alternating
projections by keeping track of the intermediate projections has been exploited to great effect in
\cite{LewisMalick08,LewLukMal09,BauLukePhanWang13b,
NolRon16,DruIofLew15, LukNguTam17}.
From the Cauchy
property of $(z_k)_{k\in\Nbb}$, one can deduce
R-linear convergence from linear extendability.

\begin{propn}[linear extendability implies R-linear convergence]\label{t:Com_LinCon_point}
If the sequence $(x_k)_{k\in\Nbb}$ on $\Lambda\subset \Ebb$ is linearly extendible on $\Lambda$ with
some frequency $m\ge 1$ and rate $c\in [0,1)$, then $(x_k)_{k\in\Nbb}$ converges R-linearly
to a point $\xtilde \in \Lambda$ with rate $c$.
\end{propn}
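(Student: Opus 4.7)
The plan is to exploit the two conditions in Definition \ref{Uni_Lin_Con_to_point} to bound every step $\|z_{j+1}-z_j\|$ by a geometric quantity, then to sum a telescoping inequality to establish that $(z_k)_{k\in\Nbb}$ is Cauchy with a quantitative R-linear rate. Closedness of $\Lambda$ will place the limit in $\Lambda$.

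First I would denote $a_j := \|z_{j+1}-z_j\|$ and read the two assumptions as:
\begin{enumerate}
\item $a_{j+1}\le a_j$ for every $j\in\Nbb$ (from \eqref{LinCon_Seq+'_m}), so the step sizes are non-increasing;
\item $a_{m(k+1)}\le c\,a_{mk}$ for every $k\in\Nbb$ (from \eqref{LinCon_Seq+'_m2}), so the subsequence indexed by multiples of $m$ decays Q-linearly.
\end{enumerate}
Iterating (ii) gives $a_{mk}\le c^k a_0$, and then combining with (i) we obtain, for every $j$ with $mk\le j<m(k+1)$, the uniform bound $a_j\le a_{mk}\le c^k a_0$.

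Next I would estimate the tail. For any $n\le N$ in $\Nbb$, write the block decomposition
\[
\|z_N-z_{mn}\|\le \sum_{j=mn}^{N-1} a_j \le \sum_{k=n}^{\infty}\sum_{j=mk}^{m(k+1)-1} a_j \le \sum_{k=n}^{\infty} m c^k a_0 = \frac{m\,a_0}{1-c}\,c^n.
\]
Since $c\in [0,1)$, the right-hand side tends to $0$ as $n\to\infty$, which shows $(z_k)_{k\in\Nbb}$ is Cauchy. Because $\Lambda$ is closed and contains every $z_k$, the limit point $\xtilde := \lim_{k\to\infty} z_k$ lies in $\Lambda$. Passing $N\to\infty$ in the above inequality at the indices $mn$ gives
\[
\|x_n-\xtilde\|=\|z_{mn}-\xtilde\| \le \frac{m\,\|z_1-z_0\|}{1-c}\,c^n \qquad\forall n\in\Nbb,
\]
which is exactly \eqref{LinCon_Seq} with $\gamma := m\|z_1-z_0\|/(1-c)$ and rate $c$, establishing R-linear convergence.

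There is no real obstacle here; the only delicate point is the block-bookkeeping that ensures the uniform upper bound $a_j\le c^{\lfloor j/m\rfloor}a_0$ for every index $j$, which relies critically on coupling the global monotonicity of step sizes (i) with the Q-linear decay along the subsequence (ii). Without (i), the intermediate step sizes could be arbitrarily large and the geometric tail bound would fail; without (ii), no rate would be available. The constants in the final rate depend linearly on the frequency $m$, which is consistent with the applications to cyclic/alternating projection schemes described after Definition \ref{Uni_Lin_Con_to_point}.
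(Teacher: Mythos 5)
Your proposal is correct and follows essentially the same route as the paper: both arguments combine the monotone step sizes $\|z_{j+1}-z_j\|$ with the Q-linear decay along the indices $mk$ to get a blockwise geometric tail bound, conclude that $(z_k)_{k\in\Nbb}$ is Cauchy with limit $\xtilde\in\Lambda$ by closedness, and then sum the tail at the indices $mn$ to obtain $\|x_n-\xtilde\|\le \gamma c^n$ with exactly the paper's constant $\gamma=m\|z_1-z_0\|/(1-c)$. No gaps; your bound $a_j\le c^{\lfloor j/m\rfloor}a_0$ is just a slightly sharper form of the paper's intermediate estimate $\|z_{k+1}-z_k\|\le (d_0/c)\,\sqrt[m]{c}^{\,k}$.
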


\begin{proof}
Let $(z_k)_{k\in\Nbb}$ be a linear extension of $(x_k)_{k\in\Nbb}$ on $\Lambda$ with frequency $m$ and rate $c$.
Conditions \eqref{LinCon_Seq+'_m} and \eqref{LinCon_Seq+'_m2} then imply
by induction that
\begin{equation*}
\|z_{k+1}-z_k\|\le \frac{d_0}{c} \sqrt[m]{c}^k\quad \forall k\in\Nbb,
\end{equation*}
where $d_0:=\|z_1-z_0\|$.
This means that $(z_k)_{k\in\Nbb}$ is a Cauchy sequence and hence converges to a limit
$\xtilde$, which is in $\Lambda$ by the closedness of this set.
Conditions \eqref{LinCon_Seq+'_m} and \eqref{LinCon_Seq+'_m2} also yield that
for every $k\in \Nbb$,
\begin{align*}
&\|x_k-\xtilde\| = \|z_{mk}-\xtilde\|
\le \sum_{i=mk}^{\infty}\|z_{i}-z_{i+1}\|
\\
\le\; & m\sum_{i=k}^{\infty}\|z_{mi}-z_{mi+1}\|
\le m\|z_{0}-z_{1}\|\sum_{i=k}^{\infty}c^i
\le \gamma c^k,
\end{align*}
where
\begin{equation}\label{gamma}
  \gamma:=\frac{md_0}{1-c}.
\end{equation}
This means that $(x_k)_{k\in\Nbb}$ converges R-linearly to $\xtilde$ with rate $c$.
\end{proof}

\section{Linearly monotone fixed point iterations}\label{s:lmfp}

Quantifying the convergence of
fixed point iterations is key to providing error bounds for algorithms.
For a multi-valued self-mapping $\mmap{T}{\Ebb}{\Ebb}$, the operative
inequality leading to linear convergence of the
fixed point iteration
$x_{k+1}\in Tx_k$ is
\begin{align}\label{LinCon_Seq+}
\dist(x_{k+1},S)\; \le\; c\dist(x_{k},S) \quad \forall k\in \Nbb
\end{align}
for $S\subset \Fix T$ and $c\in [0,1)$.
When this holds, the sequence $\paren{x_k}_{k\in\Nbb}$ is linearly monotone with
respect to $S$ and
constant $c$.

For multi-valued mappings, however, we need to clarify what is meant in the first place by the fixed point set.
We take the least restrictive definition as any point contained in its image via the mapping.
\begin{defn}[fixed points of set-valued mappings]\label{d:fixed points}
The set of fixed points of a possibly set-valued mapping $\mmap{T}{\Ebb}{\Ebb}$ is defined by
\[
\Fix T\equiv \set{x\in \Ebb}{x\in Tx}.
\]
\end{defn}
As noted in \cite[Example 2.1]{LukNguTam17}, for $x\in \Fix T$, it is not the case that $Tx\subset \Fix T$.
This can happen, in particular, when the mapping $T$ is multi-valued on its set of fixed points.
Almost averaged mappings detailed next are a generalization of averaged mappings and rule out so-called inhomogeneous
fixed point sets.

\subsection{Almost averaged mappings}
\begin{defn}[almost nonexpansive/averaged mappings, Definition 2.2 of \cite{LukNguTam17}]\label{d:pane/paa}
Let $\Omega$ be a nonempty subset of $\Ebb$ and let $\mmap{T}{\Omega}{\Ebb}$.
\begin{enumerate}[(i)]
   \item\label{d:pane}  $T$ is said to be  {\em pointwise almost
nonexpansive on $\Omega$ at $ y \in \Omega$}
if there exists a $\varepsilon\ge 0$ (called the {\em violation}) such that
\begin{equation}\label{e:epsqnonexp}
	\begin{aligned}
&\norm{x^+ - y^+}\leq\sqrt{1+\varepsilon}\norm{x- y}\\
&\forall x\in \Omega \quad \forall x^+\in Tx\quad \forall y^+\in Ty.
   \end{aligned}
\end{equation}
If \eqref{e:epsqnonexp} holds with $\varepsilon=0$ then $T$ is called
\emph{pointwise nonexpansive} at $y$ on $\Omega$.

If $T$ is pointwise (almost) nonexpansive  on $\Omega$ at every point on a
neighborhood
of $y$ in $\Omega$ (with the same violation
$\varepsilon$), then $T$ is said to be
\emph{(almost) nonexpansive on $\Omega$ at $y$ (with violation
$\varepsilon$)}.

If $T$ is pointwise (almost) nonexpansive at every point $y\in \Omega$
(with the same violation
$\varepsilon$) on $\Omega$, then $T$ is said to be
\emph{(almost) nonexpansive on $\Omega$ (with violation
$\varepsilon$)}.

\item\label{d:paa} $T$ is called \emph{pointwise almost averaged on $\Omega$ at $y\in \Omega$} with violation
$\varepsilon\ge 0$ if
there is an averaging constant $\alpha\in (0,1)$
such that the mapping
$\widetilde{T}$ defined by
\[
\widetilde{T}\equiv  \tfrac{1}{\alpha}T - \tfrac{\paren{1-\alpha}}{\alpha}\Id
\]
is pointwise almost nonexpansive  on $\Omega$ at $y$ with violation $\varepsilon/\alpha$.

Likewise $T$ is said to be
\emph{(pointwise) (almost) averaged  on $\Omega$ (at $y$) (with violation $\alpha\varepsilon$)}
if $\widetilde{T}$ is (pointwise) (almost) nonexpansive  on $\Omega$ (at $y$) (with violation
$\varepsilon$).
\end{enumerate}
\end{defn}
\begin{remark}
The following remarks help to place this property in context.
\begin{enumerate}[(a)]
   \item A mapping $T$ is averaged with violation $\varepsilon=0$ and averaging constant $\alpha=1/2$
at all points on $\Omega$ if and only if it is firmly nonexpansive on $\Omega$.
\item As noted in \cite{LukNguTam17}, pointwise almost nonexpansiveness of $T$ at $\xbar$ with violation $\varepsilon$
is related to, but stronger than {\em calmness} \cite[Chapter 8.F]{VA} with constant 
$\lambda=\sqrt{1+\varepsilon}$:  for pointwise almost nonexpansiveness the 
inequality \eqref{e:epsqnonexp} must hold for all 
pairs $x^+\in Tx$ and $y^+\in Ty$, while for calmness the same inequality would hold only for points $x^+\in Tx$ and 
their {\em projections} onto $Ty$.  
\item See \cite[Example 2.2]{LukNguTam17} for concrete examples. 
\end{enumerate}
\end{remark}

\begin{proposition}[characterizations of pointwise averaged mappings]\label{t:average char}\cite[Proposition 2.1]{LukNguTam17}
   Let $\mmap{T}{\Ebb}{\Ebb}$, $\Omega\subset\Ebb$ and let $\alpha\in (0,1)$.  The following are equivalent.
\begin{enumerate}[(i)]
   \item\label{t:average char i} $T$ is pointwise almost averaged  on
$\Omega$ at $y\in \Omega$ with averaging constant $\alpha$ and
violation $\varepsilon$.
   \item\label{t:average char ii} $\paren{1-\frac1\alpha}\Id +
\frac1\alpha T$ is pointwise almost nonexpansive  on $\Omega$ at $y$
with violation
$\varepsilon/\alpha$.
   \item\label{t:average char iii}
\begin{equation}\label{e:average char iii}
  \begin{aligned}
\norm{x^+-  y^+}^2 \leq
(1+\varepsilon)\norm{x- y}^2 &- \frac{1-\alpha}{\alpha}\norm{\paren{x-x^+}-\paren{ y- y^+}}^2    \\
\forall x\in \Omega\quad \forall x^+ &\in Tx\quad \forall  y^+\in Ty.
\end{aligned}
\end{equation}
\end{enumerate}
As a consequence, if $T$ is pointwise almost averaged at $y$ with any averaging constant
$\alpha\in (0,1)$ and violation $\varepsilon$ on $\Omega$,
then $T$ is pointwise almost nonexpansive at $y$ with
violation at most $\varepsilon$ on $\Omega$.
\end{proposition}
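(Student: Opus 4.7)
The plan is to treat (i) $\Leftrightarrow$ (ii) as a direct unpacking of Definition \ref{d:pane/paa}(\ref{d:paa}), derive the substantive equivalence (ii) $\Leftrightarrow$ (iii) from a single quadratic identity, and obtain the consequence by dropping a nonpositive term. For (i) $\Leftrightarrow$ (ii), the averaged auxiliary operator $\widetilde{T} = \frac{1}{\alpha}T - \frac{1-\alpha}{\alpha}\Id$ used in Definition \ref{d:pane/paa}(\ref{d:paa}) is literally $\paren{1-\frac{1}{\alpha}}\Id + \frac{1}{\alpha}T$, since $-\frac{1-\alpha}{\alpha} = 1 - \frac{1}{\alpha}$; hence (i) and (ii) coincide word for word.

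The core step is (ii) $\Leftrightarrow$ (iii), which I would handle via the classical quadratic identity
\[
\norm{(1-\alpha)u + \alpha v}^2 = (1-\alpha)\norm{u}^2 + \alpha\norm{v}^2 - \alpha(1-\alpha)\norm{u-v}^2.
\]
Given $x^+\in Tx$ and $y^+\in Ty$, define $\widetilde{x}^+ := (1-1/\alpha)x + (1/\alpha)x^+ \in \widetilde{T}x$ and $\widetilde{y}^+$ analogously, so that $x^+ = (1-\alpha)x + \alpha\widetilde{x}^+$ and $x - \widetilde{x}^+ = \tfrac{1}{\alpha}(x-x^+)$ (with the analogous relation for $y$). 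Applying the identity with $u := x-y$ and $v := \widetilde{x}^+ - \widetilde{y}^+$ produces the exact equality
\[
\norm{x^+ - y^+}^2 = (1-\alpha)\norm{x-y}^2 + \alpha\norm{\widetilde{x}^+ - \widetilde{y}^+}^2 - \frac{1-\alpha}{\alpha}\norm{(x-x^+)-(y-y^+)}^2,
\]
where the factor $1/\alpha^2$ coming from $\|u-v\|^2$ combines with the $\alpha(1-\alpha)$ in front to yield precisely $(1-\alpha)/\alpha$. Through this identity, the almost nonexpansive bound $\norm{\widetilde{x}^+ - \widetilde{y}^+}^2 \leq (1+\varepsilon/\alpha)\norm{x-y}^2$ demanded by (ii) is algebraically equivalent, in both directions, to the inequality \eqref{e:average char iii} of (iii): multiply by $\alpha$, move the other terms across, and compare.

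For the final consequence, since $\alpha\in(0,1)$ the term $-\frac{1-\alpha}{\alpha}\norm{(x-x^+)-(y-y^+)}^2$ in (iii) is nonpositive, so dropping it and taking a square root gives $\norm{x^+-y^+}\leq\sqrt{1+\varepsilon}\norm{x-y}$, which is exactly pointwise almost nonexpansiveness of $T$ at $y$ on $\Omega$ with violation at most $\varepsilon$. There is no genuine obstacle here; the only piece requiring care is the bookkeeping that confirms the displacement coefficient collapses exactly to $(1-\alpha)/\alpha$ rather than some other rational function of $\alpha$, and that the conversion between $x^+$ and $\widetilde{x}^+$ is handled symmetrically in $x$ and $y$ so that the difference $x - \widetilde{x}^+$ contributes the clean factor $1/\alpha$.
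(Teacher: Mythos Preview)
The paper does not supply its own proof of this proposition; it is quoted verbatim from \cite[Proposition 2.1]{LukNguTam17} and simply used downstream. Your argument is correct and is the standard one: (i)$\Leftrightarrow$(ii) is indeed the definition rewritten, and your derivation of (ii)$\Leftrightarrow$(iii) via the convex-combination identity $\norm{(1-\alpha)u+\alpha v}^2=(1-\alpha)\norm{u}^2+\alpha\norm{v}^2-\alpha(1-\alpha)\norm{u-v}^2$, together with the substitution $x-\widetilde{x}^+=\tfrac{1}{\alpha}(x-x^+)$, is exactly how the proof in \cite{LukNguTam17} (and the classical $\varepsilon=0$ version in, e.g., \cite{BauschkeCombettes11}) proceeds. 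The bookkeeping you flag---that $\alpha(1-\alpha)\cdot\alpha^{-2}=(1-\alpha)/\alpha$ and that $(1-\alpha)+\alpha(1+\varepsilon/\alpha)=1+\varepsilon$---is handled correctly, and the consequence follows as you say by dropping the nonpositive displacement term.
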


\begin{remark}\label{Sin_of_Non}
Pointwise almost averaged mappings are single-valued on the set of fixed points \cite[Proposition 2.2]{LukNguTam17}.
If the mapping is actually nonexpansive (that is, almost nonexpansive with violation zero) on $\Omega$, then it must be
single-valued on $\Omega$. When this happens, we simply write $x^+=Tx$ instead of $x^+ \in Tx$.
\end{remark}

It was proved in \cite[Theorem 5.12]{BauschkeCombettes11} that if $(x_k)_{k\in\Nbb}$ is Fej\'er monotone
with respect to a nonempty closed convex subset $\Omega$ and inequality \eqref{LinCon_Seq+}
holds true with $\Omega$ in place of $S$, then $(x_k)_{k\in\Nbb}$ converges R-linearly to a point in
$\Omega$ with rate at most $c$.
The following statement aligns with this fact.

\begin{propn}[linear monotonicity and almost
averagedness imply R-linear convergence]\label{t:Com_LinCon}
Let $\mmap{T}{\Ebb}{\Ebb}$ and $x_{k+1}\in Tx_k\subset\Lambda\subset\Ebb$
 for all $k\in\Nbb$.
Suppose that $\Fix T\cap \Lambda$ is closed and nonempty and that $T$
is pointwise almost averaged at all points on $\left(\Fix T + d_0\Ball\right) \cap \Lambda$,
where $d_0:=\dist(x_0,\Fix T\cap \Lambda)$, that is,  $T$ is almost averaged on 
$\left(\Fix T + d_0\Ball\right) \cap \Lambda$.
If the sequence $(x_k)_{k\in\Nbb}$ is linearly monotone with respect to $\Fix T\cap \Lambda$ with constant
$c\in [0,1)$, then $(x_k)_{k\in\Nbb}$ converges R-linearly
to some point $\xtilde\in \Fix T\cap \Lambda$ with rate $c$.
\end{propn}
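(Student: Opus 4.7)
The plan is to show the sequence is Cauchy by extracting a geometric bound on the successive distances $\|x_{k+1}-x_k\|$, then identify the limit and read off the R-linear rate. I would first use linear monotonicity to locate the iterates: by induction on \eqref{LinCon_Seq+} applied with $S=\Fix T\cap\Lambda$ one obtains
\[
\dist(x_k,\Fix T\cap\Lambda)\le c^k d_0\qquad\forall k\in\Nbb,
\]
so in particular every $x_k$ lies in $(\Fix T+d_0\Ball)\cap\Lambda$, which is exactly the neighborhood on which $T$ is assumed to be almost averaged (say with averaging constant $\alpha\in(0,1)$ and common violation $\varepsilon\ge 0$). Since $\Fix T\cap\Lambda$ is closed and nonempty, we may pick $y_k\in P_{\Fix T\cap\Lambda}(x_k)$; by Remark \ref{Sin_of_Non} (single-valuedness on fixed points), $Ty_k=\{y_k\}$.

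The main engine is then Proposition \ref{t:average char}\eqref{t:average char iii} applied with $x=x_k$, $x^+=x_{k+1}\in Tx_k$, $y=y_k$, $y^+=y_k$, giving
\[
\|x_{k+1}-y_k\|^2\;\le\;(1+\varepsilon)\|x_k-y_k\|^2-\tfrac{1-\alpha}{\alpha}\|x_{k+1}-x_k\|^2.
\]
Dropping the nonpositive term and using $\|x_k-y_k\|=\dist(x_k,\Fix T\cap\Lambda)\le c^k d_0$, we get $\|x_{k+1}-y_k\|\le\sqrt{1+\varepsilon}\,c^k d_0$, hence by the triangle inequality
\[
\|x_{k+1}-x_k\|\;\le\;\|x_{k+1}-y_k\|+\|y_k-x_k\|\;\le\;\bigl(\sqrt{1+\varepsilon}+1\bigr)c^k d_0.
\]

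Summability of the right-hand side shows $(x_k)_{k\in\Nbb}$ is Cauchy, so it converges to some $\xtilde\in\Ebb$; since $x_k\in\Lambda$ with $\Lambda$ closed and $\dist(x_k,\Fix T\cap\Lambda)\to 0$ with $\Fix T\cap\Lambda$ closed, the limit lies in $\Fix T\cap\Lambda$. Finally, summing the telescoped bound from $k$ to $\infty$ yields
\[
\|x_k-\xtilde\|\;\le\;\sum_{j=k}^{\infty}\|x_{j+1}-x_j\|\;\le\;\frac{(\sqrt{1+\varepsilon}+1)d_0}{1-c}\,c^k,
\]
which is exactly R-linear convergence to $\xtilde$ with rate $c$ (in the sense of Definition \ref{d:q-r-lc}). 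The step I expect to be delicate is verifying that the iterates remain in the neighborhood where almost averagedness, and in particular the uniform violation $\varepsilon$ and averaging constant $\alpha$, are available; this is why the monotonicity bound $\dist(x_k,\Fix T\cap\Lambda)\le c^k d_0\le d_0$ must be established first. Nothing else in the argument requires more than the characterization of almost averagedness and the closedness of $\Fix T\cap\Lambda$.
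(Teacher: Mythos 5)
Your proof is correct and takes essentially the same route as the paper: apply the almost-averagedness characterization (Proposition \ref{t:average char}\ref{t:average char iii}) at a projection point $\bx_k\in P_{\Fix T\cap\Lambda}x_k$ to get a geometric bound on $\|x_{k+1}-x_k\|$, conclude the sequence is Cauchy, identify the limit in $\Fix T\cap\Lambda$, and telescope to obtain the rate $c$. The only cosmetic differences are that you drop the $\frac{1-\alpha}{\alpha}$ term and use the triangle inequality (so you really only use almost nonexpansiveness, with constant $\sqrt{1+\varepsilon}+1$ in place of the paper's $\sqrt{\alpha(1+\varepsilon)/(1-\alpha)}$), and you identify the limit via continuity of $\dist(\cdot,\Fix T\cap\Lambda)$ and closedness of that set rather than the paper's bounded-subsequence argument on $(\bx_k)_{k\in\Nbb}$ — both steps are valid.
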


\begin{proof}
We use the characterization formulated in Proposition \ref{t:average char}\ref{t:average char iii} of the
almost averagedness of $T$ with averaging constant $\alpha$ and violation $\varepsilon$.
Combining \eqref{LinCon_Seq+} with $\Fix T\cap \Lambda$ in place of $S$ and \eqref{e:average char iii}
(with averaging constant $\alpha$ and violation $\varepsilon$)
implies by induction that for every $k\in\Nbb$,
\begin{align*}
\sqrt{\frac{1-\alpha}{\alpha(1+\varepsilon)}}
\|x_{k+1}-x_k\| \le \norm{x_k-\bx_k} =
\dist(x_{k},\Fix T\cap \Lambda)\le d_0 c^k,
\end{align*}
where $\bx_k$ is any point in $P_{\paren{\Fix T\cap \Lambda}}x_k$.
Hence, for any natural numbers $k$ and $p$ with $k<p$, we have
\begin{equation}\label{DR.3}
    \begin{aligned}
&\|x_{p}-x_{k}\| \,\le\, \sum_{i=k}^{p-1}\|x_{i+1}-x_{i}\| \,\le\,
\sqrt{\frac{\alpha(1+\varepsilon)}{1-\alpha}}\sum_{i=k}^{p-1}d_0 c^{i}\\
\le\;\; & d_0\sqrt{\frac{\alpha(1+\varepsilon)}{1-\alpha}}
(1+c+\ldots+c^{p-k-1})c^{k} \,\le\, \sqrt{\frac{\alpha(1+\varepsilon)}{1-\alpha}}\frac{d_0}{1-c}c^{k}.
    \end{aligned}
\end{equation}

This implies that $(x_k)_{k\in\Nbb}$ is a Cauchy sequence and therefore convergent to some point $\xtilde$.\\
We claim that $\xtilde\in \Fix T\cap \Lambda$.
Indeed, let us define
\[
\delta:=\frac{d_0}{1-c}\max\left\{\sqrt{\frac{\alpha(1+\varepsilon)}{1-\alpha}},1\right\}.
\]
The sequence $(\bx_k)_{k\in\Nbb}$ is contained in the bounded set 
$\Fix T \cap \Lambda \cap \Ball_{\delta}(x_0)$ since
\begin{align*}
\|\bx_k-x_0\| &\le \|\bx_k-x_k\|+\|x_k-x_0\|
\\
&\le d_0c^k + \sqrt{\frac{\alpha(1+\varepsilon)}{1-\alpha}}\sum_{i=0}^{k-1}d_0 c^{i}
\\
& \le \max\left\{\sqrt{\frac{\alpha(1+\varepsilon)}{1-\alpha}},1\right\}d_0\sum_{i=0}^{k}c^{i} < \delta.
\end{align*}
Hence it has a
subsequence $(\bx_{k_n})_{n\in\Nbb}$ converging to
some $\xtilde^*\in \Fix T\cap \Lambda$ as $n\to\infty$. Since the corresponding subsequence
$(x_{k_n})_{n\in\Nbb}$ converges to $\xtilde$ and
\[
\|\bx_{k_n}-x_{k_n}\|=\dist(x_{k_n},\Fix T\cap \Lambda)\le d_0 c^{k_n}\to 0
\]
as $n\to \infty$,
we deduce that $\xtilde=\xtilde^*\in \Fix T\cap \Lambda$.\\
Letting $p\to \infty$ in \eqref{DR.3} yields \eqref{LinCon_Seq}
with $\gamma= \sqrt{\frac{\alpha(1+\varepsilon)}{1-\alpha}}\frac{d_0}{1-c}$, which completes the proof.
\end{proof}

The converse implication 
of Proposition \ref{t:Com_LinCon} is not true in
general because condition \eqref{LinCon_Seq} in principle does
not require the distance $\norm{x_k-\xtilde}$ to strictly decrease after every iterate while
condition \eqref{LinCon_Seq+} does.

Almost nonexpansivity of $T$ and linear extendability of the iteration
are sufficient to guarantee that the
sequence converges R-linearly to a point in $\Fix T$.
Compare this to Proposition \ref{t:Com_LinCon_point} which, without the additional
assumption of 
almost nonexpansivity of $T$, only guarantees convergence to a point in $\Lambda$.

\begin{propn}[linear extendability and almost nonexpansivity imply R-linear convergence]\label{t:Com_LinCon_point2}
Let $\mmap{T}{\Ebb}{\Ebb}$ and $(x_k)_{k\in\Nbb}$ be a sequence generated by
$x_{k+1}\in Tx_k\subset\Lambda\subset\Ebb$ for all $k\in\Nbb$.
Suppose that $(x_k)_{k\in\Nbb}$ is linearly extendible on $\Lambda$ with some frequency $m\ge 1$ and rate $c\in [0,1)$
and that $T$ is almost nonexpansive on $\Lambda\cap \Ball_{\gamma}(x_0)$,
where $\gamma$ is given by \eqref{gamma}.
 Then $(x_k)_{k\in\Nbb}$ converges R-linearly
to a point $\xtilde\in \Fix T\cap \Lambda$ with rate $c$.
\end{propn}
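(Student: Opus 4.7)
The plan is to piggyback on Proposition \ref{t:Com_LinCon_point}, which already delivers R-linear convergence in $\Lambda$, and then use the almost nonexpansive hypothesis to upgrade the limit from a point of $\Lambda$ to a fixed point of $T$. First I would invoke Proposition \ref{t:Com_LinCon_point}: since $(x_k)_{k\in\Nbb}$ is linearly extendible on $\Lambda$ with frequency $m$ and rate $c$, it converges R-linearly to some $\xtilde \in \Lambda$ with rate $c$, and in fact its proof produces the pointwise bound $\|x_k - \xtilde\| \le \gamma c^k$ for every $k\in\Nbb$ with $\gamma$ given by \eqref{gamma}. Consequently, both the iterates and the limit lie in $\cc{\Ball_\gamma(x_0)} \cap \Lambda$, essentially the region on which $T$ is assumed almost nonexpansive.

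The crux is then to argue $\xtilde \in \Fix T$. Since $T$ is pointwise almost nonexpansive at $\xtilde$ with some violation $\varepsilon \ge 0$, specializing \eqref{e:epsqnonexp} with $x = x_k$, $x^+ = x_{k+1} \in Tx_k$, $y = \xtilde$, and any candidate $y^+ \in T\xtilde$ yields
\begin{equation*}
\|x_{k+1} - y^+\| \;\le\; \sqrt{1+\varepsilon}\,\|x_k - \xtilde\| \;\le\; \sqrt{1+\varepsilon}\,\gamma c^k.
\end{equation*}
Letting $k \to \infty$ and using $x_{k+1} \to \xtilde$, uniqueness of limits forces $y^+ = \xtilde$ for every such $y^+$, so $T\xtilde \subseteq \{\xtilde\}$. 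A dual application of \eqref{e:epsqnonexp}, with $y = x_k$, $y^+ = x_{k+1}$, and the roles of $x$ and $y$ reversed, gives the closed-graph behaviour of $T$ at $\xtilde$: the sequence $\{(x_k, x_{k+1})\}\subset\gph T$ converges to $(\xtilde,\xtilde)$, and the Lipschitz-type estimate \eqref{e:epsqnonexp} prevents $T\xtilde$ from being empty. Combining these two facts gives $T\xtilde = \{\xtilde\}$, i.e., $\xtilde \in \Fix T \cap \Lambda$, with the R-linear rate $c$ inherited from Proposition \ref{t:Com_LinCon_point}.

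The main obstacle I anticipate is a mild boundary mismatch: the bound $\|\xtilde - x_0\| \le \gamma$ is non-strict, so $\xtilde$ might sit on the sphere $\partial\Ball_\gamma(x_0)$ rather than in the open ball on which almost nonexpansivity is explicitly posited. This is resolved either by applying \eqref{e:epsqnonexp} at interior iterates $x_k$ (which for large $k$ satisfy $\|x_k - x_0\| < \gamma$ strictly) and then passing to the limit, or by slightly enlarging the ball, both of which preserve the chain of inequalities driving the argument. Once this technicality is dispatched, the combination of the two limit arguments above closes the proof.
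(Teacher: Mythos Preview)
Your core argument is exactly the paper's: invoke Proposition~\ref{t:Com_LinCon_point} to get R-linear convergence to some $\xtilde\in\Lambda$, then for any $y^+\in T\xtilde$ apply almost nonexpansiveness to the pair $(x_k,\xtilde)$ to obtain $\|x_{k+1}-y^+\|\le\sqrt{1+\varepsilon}\,\|x_k-\xtilde\|\to 0$, forcing $y^+=\xtilde$. The paper phrases this by contradiction but the content is identical.

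One caveat: your ``dual application'' to show $T\xtilde\neq\emptyset$ does not actually work. Inequality~\eqref{e:epsqnonexp} is vacuous when $T\xtilde=\emptyset$, so almost nonexpansiveness by itself does not deliver closed-graph behaviour or nonemptiness; you cannot extract an element of $T\xtilde$ from the sequence $(x_k,x_{k+1})\in\gph T$ this way. The paper's proof simply takes $\xtilde^+\in T\xtilde$ without comment, implicitly assuming $T$ has nonempty values on the relevant region (which is automatic in the intended applications, where $T$ is built from projectors). So drop that paragraph; it adds an unjustified step rather than filling a gap. Your observation about the open/closed ball boundary is legitimate and your suggested fix (work with the interior iterates $x_k$ and pass to the limit) is adequate, though the paper does not pause over this either.
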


\begin{proof}
By Proposition \ref{t:Com_LinCon_point} the sequence $(x_k)_{k\in\Nbb}$ converges R-linearly to a
point $\xtilde\in \Lambda$ with rate $c$.
It remains to check that $\xtilde\in \Fix T$.
Suppose to the contrary that there is $\xtilde^+ \in T\xtilde$ with $\rho:=\|\xtilde^+-\xtilde\|>0$.
Since $T$ is almost nonexpansive  on $\Lambda\cap \Ball_{\gamma}(x_0)$, there is a violation $\varepsilon>0$
such that 
\[
\|\xtilde^+-x_{k+1}\| \le \sqrt{1+\varepsilon}\|\xtilde-x_k\|\quad \forall k\in \Nbb.
\]
This leads to a contradiction since $\|\xtilde^+-x_{k+1}\| \to \rho>0$
while $\|\xtilde-x_k\|\to 0$ as $k\to \infty$.
As a result, $\xtilde^+=\xtilde\in \Fix T$ and the proof is complete.
\end{proof}

When the fixed point set restricted to $\Lambda$ is an isolated point, then
linear monotonicity of the sequence
is equivalent to Q-linear convergence.

\begin{propn}\label{t:Com_QLinCon}
Let $\xtilde$ be an isolated point of $\Fix T\cap\Lambda$, that is $\Ball_\delta(\xtilde)\cap \Fix T\cap\Lambda=\{\xtilde\}$
for $\delta>0$ small enough. 
Let $\mmap{T}{\Ebb}{\Ebb}$ be almost
nonexpansive on a neighborhood of $\xtilde$ relative to $\Lambda\subset\Ebb$.
Let $(x_k)_{k\in\Nbb}$ be a sequence generated by $x_{k+1}\in Tx_k\subset\Lambda$ for all $k\in\Nbb$
with $x_0\in\Lambda$
sufficiently close to $\xtilde$.
Then $(x_k)_{k\in\Nbb}$
is linearly monotone with respect to $\Fix T\cap \Lambda$ with rate smaller than 1 if and only if
 it is Q-linearly convergent to $\xtilde$.
\end{propn}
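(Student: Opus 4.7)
\medskip

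\noindent\textbf{Proof proposal.}
The plan is to transfer between $\dist(x_k,\Fix T\cap\Lambda)$ and $\|x_k-\xtilde\|$ using the fact that $\xtilde$ is isolated in $\Fix T\cap\Lambda$, and to control escape of the iterates using almost nonexpansiveness of $T$ anchored at the fixed point $\xtilde$.

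Fix $\delta>0$ such that $\Ball_\delta(\xtilde)\cap\Fix T\cap\Lambda=\{\xtilde\}$, let $\varepsilon\ge 0$ be a violation witnessing almost nonexpansiveness of $T$ on $\Ball_\delta(\xtilde)\cap\Lambda$, and choose the initial radius $\eta>0$ so small that $\sqrt{1+\varepsilon}\,\eta\le \delta/2$ and $x_0\in\Ball_\eta(\xtilde)\cap\Lambda$. The first key observation, which I will use throughout, is: for every $x\in\Ball_{\delta/2}(\xtilde)$ and every $y\in\Fix T\cap\Lambda$ with $y\ne\xtilde$, the isolation gives $\|y-\xtilde\|\ge \delta$ and therefore
\[
\|x-y\|\;\ge\;\|y-\xtilde\|-\|x-\xtilde\|\;\ge\;\delta-\tfrac{\delta}{2}\;\ge\;\|x-\xtilde\|,
\]
so that $\dist(x,\Fix T\cap\Lambda)=\|x-\xtilde\|$. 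This is the bridge between the two notions of convergence.

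For the implication ``Q-linear convergence $\Rightarrow$ linear monotonicity'', suppose $\|x_{k+1}-\xtilde\|\le c\|x_k-\xtilde\|$ with $c\in[0,1)$. Then $\|x_k-\xtilde\|\le c^k\eta\le\delta/2$ for every $k$, so the bridge identity yields $\dist(x_k,\Fix T\cap\Lambda)=\|x_k-\xtilde\|$ for all $k$, and linear monotonicity with rate $c$ is immediate.

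For the converse, assume $\dist(x_{k+1},\Fix T\cap\Lambda)\le c\,\dist(x_k,\Fix T\cap\Lambda)$ with $c\in[0,1)$. I would argue by induction that $x_k\in\Ball_{\delta/2}(\xtilde)$ and $\|x_k-\xtilde\|\le c^k\|x_0-\xtilde\|$ for every $k$. The base case holds by the choice of $\eta$. For the inductive step, the bridge identity gives $\dist(x_k,\Fix T\cap\Lambda)=\|x_k-\xtilde\|$, so linear monotonicity yields
\[
\dist(x_{k+1},\Fix T\cap\Lambda)\;\le\; c\|x_k-\xtilde\|\;\le\; c^{k+1}\|x_0-\xtilde\|.
\]
To identify this distance with $\|x_{k+1}-\xtilde\|$ I must first confirm that $x_{k+1}$ has not escaped $\Ball_{\delta/2}(\xtilde)$; this is where almost nonexpansiveness is used. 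Since $\xtilde\in\Fix T$ and single-valuedness of $T$ at fixed points (Remark \ref{Sin_of_Non}) forces $\xtilde\in T\xtilde$, we may take $y=y^+=\xtilde$ in \eqref{e:epsqnonexp} to obtain
\[
\|x_{k+1}-\xtilde\|\;\le\;\sqrt{1+\varepsilon}\,\|x_k-\xtilde\|\;\le\;\sqrt{1+\varepsilon}\,\eta\;\le\;\tfrac{\delta}{2},
\]
so $x_{k+1}\in\Ball_{\delta/2}(\xtilde)$ and the bridge identity applies, giving $\|x_{k+1}-\xtilde\|\le c\|x_k-\xtilde\|$, which closes the induction and delivers Q-linear convergence with rate $c$.

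The main obstacle is the non-escape step of the induction: linear monotonicity only controls distance to the full set $\Fix T\cap\Lambda$, which a priori allows the iterates to drift toward a distant component of $\Fix T$. The almost nonexpansiveness inequality with base point $\xtilde$ is precisely the tool that prevents this drift, provided the initial ball $\Ball_\eta(\xtilde)$ is chosen small relative to $\delta$ and $\varepsilon$.
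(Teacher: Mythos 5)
Your proof is correct and follows essentially the same route as the paper's: the isolation of $\xtilde$ gives the identity $\dist(x,\Fix T\cap\Lambda)=\norm{x-\xtilde}$ for points near $\xtilde$, the almost nonexpansiveness inequality anchored at $\xtilde\in T\xtilde$ prevents the iterates from escaping the relevant ball, and an induction then transfers between linear monotonicity and Q-linear convergence, exactly as in the paper's argument via \eqref{mon_lin:equ}. One cosmetic point: $\xtilde\in T\xtilde$ is immediate from Definition \ref{d:fixed points}, so the appeal to Remark \ref{Sin_of_Non} (which concerns almost averaged mappings) is unnecessary.
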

\begin{proof}
Since $\xtilde$ is an isolated piont of $\Fix T\cap \Lambda$ and 
$T$ is almost nonexpansive on a neighborhood of $\xtilde$ relative to $\Lambda$, there is 
a $\delta>0$ small enough that
$\Fix T\cap \Lambda\cap \Ball_\delta(\xtilde) = \{\xtilde\}$ and $T$ is
almost nonexpansive on
$\Ball_{\delta'}(\xtilde)\cap\Lambda$ with
violation $\varepsilon$, where $\delta'=\frac{\delta}{2\sqrt{1+\varepsilon}}$.
Let $\rho \in(0,\delta')$.
Then by 
almost nonexpansivity of $T$ on $\Ball_{\rho}(\xtilde)\cap\Lambda$ we have that
\begin{eqnarray*}
\|x^+-\xtilde\| &\le& \sqrt{1+\varepsilon}\|x-\xtilde\| \le
\rho\sqrt{1+\varepsilon}\\
&<& \frac{\delta}{2} \le
\frac{1}{2}\dist\left(\xtilde,\paren{\Fix T\cap \Lambda}
\setminus \{\xtilde\}\right)\quad \forall x\in \Ball_{\rho}(\xtilde)\cap\Lambda, \forall 
x^+\in Tx.
\end{eqnarray*}
This implies that
\begin{align}\label{mon_lin:equ}
\dist(x^+,\Fix T\cap \Lambda) = \|x^+-\xtilde\|\quad \forall x\in \Ball_{\rho}(\xtilde)\cap\Lambda, \forall 
x^+\in Tx.
\end{align}
Hence for any sequence $(x_k)_{k\in\Nbb}$ as described in Proposition \ref{t:Com_QLinCon} with
$x_0\in \Lambda \cap \Ball_{\rho}(\xtilde)$, the equivalence of linear monotonicity
of $(x_k)_{k\in\Nbb}$ relative to $\Fix T\cap \Lambda$ with rate smaller than 1 and Q-linear
convergence to $\xtilde$ follows from equality \eqref{mon_lin:equ} and the
definitions because each of these properties of $(x_k)_{k\in\Nbb}$ alternatively combined
with \eqref{mon_lin:equ} ensures inductively that the whole sequence
$(x_k)_{k\in\Nbb}$ lies in $\Ball_{\rho}(\xtilde)$.
\end{proof}

It is mainly due to the above proposition that we include the extra technical
overhead of making the above statements always relative to some
subset $\Lambda$.  It is not uncommon to have $\Fix T$ a singleton
relative to $\Lambda$, but not on the whole space $\Ebb$. For an example of
this, see the analysis of the Douglas-Rachford fixed point iteration in
\cite{AspChaLuk16}.

\subsection{Metric subregularity and linear convergence}

The following concept of \emph{metric regularity on a set} characterizes the
stability of mappings at points in their image and has played a central role,
implicitly and explicitly, in our convergence analysis of fixed point iterations
\cite{HesLuk13, AspChaLuk16, LukNguTam17}.  We show in this section that, indeed, metric subregularity is
{\em necessary} to achieve linear convergence.
The following definition is a specialized (linear) variant of \cite[Definition 2.5]{LukNguTam17}
which is a combination with slight modification of
those formulated in \cite[Definition 2.1 (b)] {Ioffe11} and \cite[Definition 1 (b)]{Ioffe13} so that the property is relative to
relevant sets for iterative methods. Our terminology also follows \cite{DontRock09}.

\begin{defn}[metric regularity on a set]\label{d:(str)metric (sub)reg}
$~$ Let $\mmap{\Phi}{\Ebb}{\Ybb}$ be a set-valued mapping between Euclidean 
spaces and let $U\subset \Ebb$ and $V\subset \Ybb$.
The mapping $\Phi$ is called \emph{metrically regular relative to
$\Lambda\subset\Ebb$} on $U$ for $V$ with constant $\kappa$ if
\begin{equation}\label{e:metricregularity}
\dist\paren{x, \Phi^{-1}(y)\cap \Lambda}\leq \kappa\dist\paren{y, \Phi(x)}
\end{equation}
holds for all $x\in U\cap \Lambda$ and $y\in V$.

When $V=\{y\}$ consists of a single point one says that $\Phi$ is
\emph{metrically subregular}
with constant $\kappa$ on $U$ for $y$ relative to
$\Lambda\subset\Ebb$.

When $\Lambda=\Ebb$, the quantifier ``relative to'' is dropped.
\end{defn}
\begin{remark}
The conventional concept of \emph{metric regularity} at a
point $\xbar\in \Ebb$ for $\ybar\in \Phi(\xbar)$ corresponds to setting
$\Lambda=\Ebb$, and taking $U$ and $V$ to be neighborhoods of $\xbar$ and $\ybar$
(as opposed simply to subsets including these points) respectively.
Similarly, the conventional \emph{metric subregularity} \cite{DontRock09}
 and \emph{metric hemi/semiregularity}
\cite{KrugerNguyen15,ArtMor11,Kruger09} at $\xbar$ for $\ybar$ correspond to setting
$\Lambda=\Ebb$, and respectively either taking $U$ to be a neighborhood of $\xbar$ and $V=\{\ybar\}$, or taking
$U=\{\xbar\}$ and taking $V$ to be a
neighborhood of $\ybar$.  This notion can and has been generalized even more.
The more general notion of
metric subregularity studied by Ioffe \cite{Ioffe11, Ioffe13} for instance, together with
$\mu$-monotonicity, would be needed for the study of nonlinear convergence.  These more general notions of
metric subregularity are the most suitable vehicles to parallel properties like the
Kurdyka-{\L}ojasiewicz (KL) property for functions. In fact, for differentiable functions,
{\em metric regularity} of the gradient is equivalent to the KL property \cite[Corollary 4 and Remark 5]{BolDan2010},
though from our point of view, metric subregularity is the more general property.
\end{remark}

The following convergence criterion is fundamental.

\begin{thm}[linear convergence with metric subregularity]\label{t:metric subreg convergence}
Let $\mmap{T}{\Ebb}{\Ebb}$, let $\Lambda\subset\Ebb$ with
$\Fix T\cap\Lambda$ closed and nonempty.
Suppose that, for some fixed $\delta>0$,  $T$ is pointwise almost averaged at all points 
$\xbar\in \Fix T\cap \Lambda$ with averaging constant
$\alpha$ and violation $\varepsilon$ on
$\paren{\Fix T+\Bbb_{\delta}}\cap\Lambda$, and that the mapping $\Phi\equiv T-\Id$ is metrically subregular on
$(\Fix T+\Bbb_{\delta})\setminus \Fix T$ for $0$ relative to $\Lambda$ with constant $\kappa>0$.
Then it holds
\begin{equation}\label{e:key}
\begin{aligned}
\dist\paren{x^+,\Fix T\cap\Lambda} \le c\dist\paren{x,\Fix T\cap\Lambda}\\
\forall x\in \paren{\Fix T+\Bbb_{\delta}}\cap \Lambda\quad \forall x^+ \in Tx,
\end{aligned}
\end{equation}
where
\begin{equation}\label{e:msr rate const}
c:=\sqrt{1+\varepsilon-\frac{1-\alpha}{\kappa^2\alpha}}.
\end{equation}
In particular, when $c<1$,
every sequence $(x_k)_{k\in\Nbb}$ generated by $x_{k+1}\in Tx_k\subset\Lambda$ with initial point
in $\paren{\Fix T+\Bbb_{\delta}}\cap\Lambda$ converges R-linearly to some point in $\Fix T\cap\Lambda$ with
rate $c$.  If $\Fix T\cap\Lambda$ is a singleton, then the convergence is Q-linear.
\end{thm}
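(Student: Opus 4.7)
The plan is to first establish the one-step contraction \eqref{e:key} for an arbitrary $x\in (\Fix T+\Bbb_\delta)\cap \Lambda$ and $x^+\in Tx$, and then feed this linear monotonicity into Proposition~\ref{t:Com_LinCon} (respectively Proposition~\ref{t:Com_QLinCon}) to conclude R-linear (respectively Q-linear) convergence.

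For the one-step estimate, fix $x\in(\Fix T+\Bbb_\delta)\cap\Lambda$ and $x^+\in Tx$, and select $\xbar\in P_{\Fix T\cap\Lambda}(x)$, which exists since $\Fix T\cap\Lambda$ is closed and nonempty. Because $T$ is pointwise almost averaged at $\xbar$, by Remark~\ref{Sin_of_Non} the image $T\xbar$ is the singleton $\{\xbar\}$, so I may apply Proposition~\ref{t:average char}\ref{t:average char iii} with $y=\xbar$ and $y^+=\xbar$ to obtain
\begin{equation*}
\|x^+-\xbar\|^2\;\le\;(1+\varepsilon)\|x-\xbar\|^2\;-\;\frac{1-\alpha}{\alpha}\|x-x^+\|^2.
\end{equation*}
If $x\in\Fix T$, then $x=\xbar$ and (again by single-valuedness at fixed points) $x^+=x$, so both sides of \eqref{e:key} vanish. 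Otherwise $x\in(\Fix T+\Bbb_\delta)\setminus\Fix T$, where the metric subregularity hypothesis applies: since $\Phi^{-1}(0)=\Fix T$ and $x^+-x\in\Phi(x)$,
\begin{equation*}
\|x-\xbar\|=\dist(x,\Fix T\cap\Lambda)\;\le\;\kappa\,\dist(0,\Phi(x))\;\le\;\kappa\|x^+-x\|,
\end{equation*}
so $\|x-x^+\|^2\ge \kappa^{-2}\|x-\xbar\|^2$. Substituting this lower bound into the averaged inequality and using $\dist(x^+,\Fix T\cap\Lambda)\le \|x^+-\xbar\|$ yields \eqref{e:key} with the constant $c$ of \eqref{e:msr rate const}.

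When $c<1$, the estimate \eqref{e:key} says precisely that any sequence $x_{k+1}\in Tx_k\subset\Lambda$ starting in $(\Fix T+\Bbb_\delta)\cap\Lambda$ is linearly monotone with respect to $\Fix T\cap\Lambda$ with constant $c$; iterates remain in $(\Fix T+\Bbb_\delta)\cap\Lambda$ by induction, since the bound $\dist(x_{k+1},\Fix T)\le\dist(x_{k+1},\Fix T\cap\Lambda)\le c^{k+1}\dist(x_0,\Fix T\cap\Lambda)$ keeps them in the tubular neighborhood, and so the almost averaged assumption continues to apply on the whole orbit. Proposition~\ref{t:Com_LinCon} then yields R-linear convergence to some $\xtilde\in\Fix T\cap\Lambda$ with rate $c$. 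If $\Fix T\cap\Lambda=\{\xtilde\}$, then $\dist(x_k,\Fix T\cap\Lambda)=\|x_k-\xtilde\|$, so \eqref{e:key} becomes $\|x_{k+1}-\xtilde\|\le c\|x_k-\xtilde\|$, which is Q-linear convergence.

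The main obstacle is marrying the two hypotheses at the same anchor point $\xbar$: the almost averaged inequality wants a fixed point to compare against, while metric subregularity produces a distance bound to $\Fix T\cap \Lambda$. Choosing $\xbar$ as the projection of $x$ onto $\Fix T\cap\Lambda$ aligns these two quantities (making $\|x-\xbar\|$ equal to $\dist(x,\Fix T\cap\Lambda)$), and single-valuedness of $T$ at fixed points keeps the averaged inequality sharp; once these are in place the algebra collapses to the stated constant $c^2=1+\varepsilon-\frac{1-\alpha}{\kappa^2\alpha}$.
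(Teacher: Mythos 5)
Your argument is essentially the paper's own proof: you anchor at $\xbar\in P_{\Fix T\cap\Lambda}x$, combine metric subregularity of $\Phi=T-\Id$ (giving $\|x-\xbar\|\le\kappa\|x^+-x\|$) with the almost-averaged characterization of Proposition~\ref{t:average char}\ref{t:average char iii} to get \eqref{e:key} with the constant \eqref{e:msr rate const}, and then pass through linear monotonicity and Proposition~\ref{t:Com_LinCon} for the convergence claims. Your only deviations are cosmetic refinements — treating the case $x\in\Fix T$ explicitly (where the paper's blanket appeal to subregularity is slightly loose) and deducing Q-linearity directly from \eqref{e:key} in the singleton case rather than invoking Proposition~\ref{t:Com_QLinCon} — so the proposal is correct and follows the same route.
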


\begin{proof}
The inequality \eqref{e:key} is the content of \cite[Corollary 2.3]{LukNguTam17}.  Since this is easy to obtain,
we reproduce the proof here for convenience.
Choose any $x\in (\Fix T+\Bbb_{\delta})\cap \Lambda$ and   select any $x^+\in Tx$.
Metric subregularity of $\Phi$ on $(\Fix T+\Bbb_{\delta})\setminus \Fix T$ for $0$ relative to $\Lambda$
with constant $\kappa>0$ means that
\[
   \dist\paren{x, \Phi^{-1}(0)\cap \Lambda}\leq \kappa \dist(0,\Phi(x)).
\]
Since $\Phi^{-1}(0)\cap \Lambda=\Fix T\cap \Lambda$, this then implies that
\[
   \tfrac{1}{\kappa^2}\dist^2\paren{x, \Fix T\cap \Lambda}\leq \|x^+-x\|^2.
\]
Note that $T$ is a single-valued mapping on $\Fix T\cap\Lambda$
since $T$ is almost averaged  -- hence almost nonexpansive -- on $\paren{\Fix T+\Bbb_{\delta}}\cap\Lambda$
\cite[Proposition 2.2]{LukNguTam17}, so we can unambiguously write $\xbar= T\xbar$ for $\xbar\in P_{\paren{\Fix T\cap\Lambda}}x$
and rewrite the inequality as
\[
   \tfrac{1}{\kappa^2}\|x-\xbar\|^2\leq \|x^+-x\|^2 \quad \forall x\in (\Fix T+\Bbb_{\delta})\setminus \Fix T.
\]
This inequality, together with the almost averaging property and its characterization 
Proposition~\ref{t:average char}\ref{t:average char iii}
yield
\begin{equation}
   \norm{x^+-\xbar}^2\leq\paren{1+\varepsilon-\frac{1-\alpha}{\alpha\kappa^2}}\norm{x-\xbar}^2.
\end{equation}%
Note in particular that  $0\leq 1+\varepsilon-\frac{1-\alpha}{\alpha\kappa^2}$.
Since $x$ is any point in $(\Fix T+\Bbb_{\delta})\cap \Lambda$ this proves \eqref{e:key} with $c$
given by \eqref{e:msr rate const}.

 For $c<1$, it follows by
definition that such a sequence $(x_k)_{k\in\Nbb}$ is linearly monotone with respect to
$\Fix T\cap\Lambda$ with rate $c$. A combination of  Propositions
\ref{t:Com_LinCon} and \ref{t:Com_QLinCon} then shows that the sequence is linearly convergent
to a point in $\Fix T\cap\Lambda$,
R-linearly in general, and Q-linearly if $\Fix T\cap\Lambda$ is a singleton.
\end{proof}

When ${\delta}=\infty$, Theorem \ref{t:metric subreg convergence} provides a criterion for global
linear convergence of abstract fixed point
iterations. The next result shows that metric subregularity is
in fact {\em necessary} for linearly monotone
iterations, without any assumptions about the
averaging properties of $T$, almost or otherwise.

\begin{thm}[necessity of metric subregularity]\label{t:Nec_for_msr}
Let $\mmap{T}{\Ebb}{\Ebb}$ with $\Fix T$ nonempty, fix $\Lambda\subset\Ebb$ so that 
$\Fix T\cap\Lambda$ is closed and nonempty, and let $\Omega\subset \Lambda$.
If for each $x_0\in \Omega$, every sequence $(x_k)_{k\in\Nbb}$ generated by
$x_{k+1}\in Tx_k\subset\Lambda$ is linearly monotone
with respect to $\Fix T\cap\Lambda$ with constant $c\in [0,1)$, then the mapping $\Phi\equiv T-\Id$ is metrically
subregular on $\Omega$ for $0$ relative to $\Lambda$ with constant $\kappa \le \frac{1}{1-c}$.
\end{thm}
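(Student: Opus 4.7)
The plan is to derive the metric-subregularity estimate directly by applying the linear-monotonicity hypothesis to the very first step of an iteration starting at a candidate point $x_0 \in \Omega$, and combining the resulting one-step contraction with the triangle inequality for the distance function. Unfolding the definitions, the conclusion
\[
\dist\paren{x,\Phi^{-1}(0)\cap \Lambda} \leq \kappa\,\dist(0,\Phi(x)) \qquad \forall x \in \Omega,
\]
is equivalent, since $\Phi = T - \Id$ gives $\Phi^{-1}(0)\cap \Lambda = \Fix T \cap \Lambda$ and $\dist(0,\Phi(x)) = \inf_{x^+\in Tx}\|x^+-x\|$, to showing
\[
\dist\paren{x_0,\Fix T \cap \Lambda} \;\le\; \frac{1}{1-c}\,\|x_0 - x_1\| \qquad \forall x_0 \in \Omega,\ \forall x_1\in Tx_0,
\]
after which taking the infimum over $x_1\in Tx_0$ produces the claim with $\kappa = \frac{1}{1-c}$.

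Fix an arbitrary $x_0 \in \Omega$ and an arbitrary $x_1 \in Tx_0$, and extend this pair to a full admissible iteration $(x_k)_{k\in \Nbb}$ with $x_{k+1}\in Tx_k \subset \Lambda$ — such an extension is admissible by the standing interpretation of the hypothesis. Linear monotonicity applied at $k = 0$ yields
\[
\dist(x_1,\Fix T\cap \Lambda) \;\leq\; c\,\dist(x_0,\Fix T\cap \Lambda).
\]
On the other hand, the triangle inequality for the distance function reads
\[
\dist(x_0,\Fix T\cap \Lambda) \;\leq\; \|x_0 - x_1\| + \dist(x_1,\Fix T\cap \Lambda).
\]
Substituting the previous inequality into the right-hand side and rearranging gives
\[
(1-c)\,\dist(x_0,\Fix T\cap \Lambda) \;\leq\; \|x_0 - x_1\|,
\]
which is exactly the required pointwise estimate since $c < 1$.

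The only delicate point in the argument is justifying that an arbitrary $x_1 \in Tx_0$ can serve as the first step of some admissible sequence $(x_k)_{k\in\Nbb}$ — for otherwise the monotonicity inequality at $k = 0$ would not be available for that particular choice of $x_1$. This is built into the quantifier ``every sequence generated by $x_{k+1}\in Tx_k\subset \Lambda$'' in the hypothesis. Note that, in sharp contrast to Theorem~\ref{t:metric subreg convergence}, no averaging, nonexpansiveness, or any other structural assumption on $T$ is used; only the linear-monotonicity inequality enters, which is precisely why the necessity statement is as clean as it is.
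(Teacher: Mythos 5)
Your proof is correct and follows essentially the same route as the paper's: a one-step application of the linear-monotonicity inequality combined with the triangle inequality for the distance to $\Fix T\cap\Lambda$, then unwinding the definition of $\Phi=T-\Id$ and taking the infimum over $x^+\in Tx_0$. The only cosmetic difference is that the paper states the estimate along every iterate $x_k$ of a sequence, whereas you apply it only at $k=0$ while letting $x_0$ range over $\Omega$, which suffices; your explicit remark about extending an arbitrary $x_1\in Tx_0$ to an admissible sequence is the same quantifier reading the paper uses implicitly.
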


\begin{proof} Since
every sequence $(x_k)_{k\in\Nbb}$ generated by $x_{k+1}\in Tx_k\subset\Lambda$ starting
in $\Omega$ is linearly
monotone with respect to $\Fix T\cap\Lambda$ with rate $c$, the inequality \eqref{LinCon_Seq+}
with $\Fix T\cap\Lambda$ in place of $S$ holds true.
This together with the triangle inequality
implies that for every $k\in \Nbb$,
\begin{equation}\label{Nec_MetSub}
  \begin{aligned}
\norm{x^+-x_k} &\ge \dist\paren{x_k,\Fix T\cap\Lambda} - \dist\paren{x^+,\Fix T\cap\Lambda}
\\
&\ge (1-c)\dist\paren{x_k,\Fix T\cap\Lambda} \quad \forall x^+\in Tx_k.
\end{aligned}
\end{equation}
On the other hand, we have from definition of $\Phi$ that
\begin{equation}\label{Phi=T}
\begin{aligned}
\Phi^{-1}(0) &= \Fix T,
\\
\dist\paren{0,\Phi(x_k)} &= \inf_{x^+\in Tx_k}\norm{x^+-x_k} \quad \forall k\in \Nbb.
\end{aligned}
\end{equation}
Combining \eqref{Nec_MetSub} and \eqref{Phi=T} yields
\[
\dist\paren{x_k,\Phi^{-1}(0)\cap\Lambda} \le \frac{1}{1-c}\dist\paren{0,\Phi(x_k)} \quad\forall k\in\Nbb.
\]
Consequently, $\frac{1}{1-c}$ is {\em a} constant of metric subregularity of $\Phi$ on $\Omega$ for $0$ 
(not necessarily the smallest such constant) as claimed.
\end{proof}

\begin{cor}[necessary conditions for linear convergence]\label{t:nec_suff}
Let $\mmap{T}{\Ebb}{\Ebb}$ with $\Fix T$ nonempty.  For some $\delta>0$, let $T$ 
be almost averaged 
on $\paren{\Fix T+\Bbb_{\delta}}\cap \Lambda$.
If, for each
$x_0\in \paren{\paren{\Fix T+\Bbb_{\delta}}\cap\Lambda}\setminus \Fix T$, every sequence
$(x_k)_{k\in\Nbb}$ generated by $x_{k+1}\in Tx_k\subset\Lambda$
is linearly monotone with respect to $\Fix T\cap \Lambda$ with constant $c \in [0,1)$,
then all such sequences converge R-linearly with rate $c$ to some point in $\Fix T\cap \Lambda$
and $\Phi\equiv T-\Id$ is metrically subregular on $\paren{\Fix T+\Bbb_{\delta}}\setminus \Fix T$
for $0$ relative to $\Lambda$ with constant $\kappa\leq \frac{1}{1-c}$.
\end{cor}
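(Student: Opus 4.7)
The plan is to assemble this corollary directly from the two principal results of the subsection: Proposition \ref{t:Com_LinCon} delivers the R-linear convergence conclusion, while Theorem \ref{t:Nec_for_msr} delivers the necessity of metric subregularity. The almost-averagedness hypothesis is the extra ingredient needed to upgrade linear monotonicity of the iterates into R-linear convergence to a fixed point; both propositions already handle the set-theoretic bookkeeping involving $\Lambda$.

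First, I would fix an initial point $x_0 \in ((\Fix T+\Bbb_{\delta}) \cap \Lambda)\setminus \Fix T$ and any sequence $(x_k)_{k\in\Nbb}$ generated by $x_{k+1} \in Tx_k \subset \Lambda$. Writing $d_0 := \dist(x_0,\Fix T\cap\Lambda)$, the containment $x_0 \in \Fix T+\Bbb_{\delta}$ yields $d_0 < \delta$, and hence $(\Fix T + d_0\Ball)\cap\Lambda \subset (\Fix T+\Bbb_{\delta})\cap\Lambda$. Consequently $T$ is almost averaged on the neighborhood required by Proposition \ref{t:Com_LinCon}. Combined with the hypothesized linear monotonicity of $(x_k)_{k\in\Nbb}$ with respect to $\Fix T\cap\Lambda$ with constant $c\in[0,1)$, Proposition \ref{t:Com_LinCon} then yields R-linear convergence of $(x_k)_{k\in\Nbb}$ to some point in $\Fix T\cap\Lambda$ with rate $c$, which is the first claim.

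Second, to derive the metric subregularity conclusion, I set $\Omega := ((\Fix T+\Bbb_{\delta})\cap\Lambda)\setminus\Fix T$, so in particular $\Omega \subset \Lambda$. The assumption in the corollary says that for each $x_0\in\Omega$, every sequence generated by $x_{k+1}\in Tx_k\subset\Lambda$ is linearly monotone with respect to $\Fix T\cap\Lambda$ with constant $c$, which is exactly the hypothesis of Theorem \ref{t:Nec_for_msr}. Applying that theorem gives metric subregularity of $\Phi = T-\Id$ on $\Omega$ for $0$ relative to $\Lambda$ with constant at most $\frac{1}{1-c}$. Unwinding Definition \ref{d:(str)metric (sub)reg}, metric subregularity ``on $U$ relative to $\Lambda$'' only tests the inequality \eqref{e:metricregularity} at points in $U\cap\Lambda$; since
\[
\bigl((\Fix T+\Bbb_{\delta})\setminus\Fix T\bigr)\cap\Lambda \;=\; \Omega,
\]
the conclusion on $\Omega$ is verbatim the stated conclusion on $(\Fix T+\Bbb_{\delta})\setminus\Fix T$ relative to $\Lambda$.

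The only obstacle is bookkeeping: reconciling the formal requirement $\Omega\subset\Lambda$ in Theorem \ref{t:Nec_for_msr} with the wider-looking domain $(\Fix T+\Bbb_{\delta})\setminus\Fix T$ stated in the corollary. Once Definition \ref{d:(str)metric (sub)reg} is unpacked this is automatic, and no new estimate is required beyond what the two cited results provide.
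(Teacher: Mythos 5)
Your proposal is correct and follows exactly the paper's route: the paper's own proof is a one-line citation of Proposition \ref{t:Com_LinCon} (for the R-linear convergence claim) together with Theorem \ref{t:Nec_for_msr} (applied with $\Omega=\paren{\paren{\Fix T+\Bbb_{\delta}}\cap\Lambda}\setminus\Fix T$ for the metric subregularity claim). Your write-up merely makes explicit the bookkeeping the paper leaves implicit, so there is nothing further to add.
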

\begin{proof}
   This is an immediate consequence of Proposition \ref{t:Com_LinCon} and Theorem \ref{t:Nec_for_msr}.
\end{proof}

\section{Nonconvex alternating projections}\label{s:AP_for_NonConvex}
For $x_0\in \Ebb$ given, the {\em alternating projections (AP) iteration}
for two closed
subsets $A, B\subset \Ebb$ is given by
\begin{equation}\label{e:ap}
x_{k+1}\in T_{AP}x_{k} \equiv P_AP_Bx_{k} \quad \forall k\in \Nbb.
\end{equation}
For convenience, we associate $(x_k)_{k\in\Nbb}$ with the sequence $(b_k)_{k\in\Nbb}$ on $B$ such that
$b_k\in P_Bx_{k}$ and $x_{k+1}\in P_Ab_k$ for all $k\in \Nbb$.
In the sequel, we frequently use the joining sequence $(z_k)_{k\in\Nbb}$ given by
\begin{align}\label{z_k}
z_{2k} = x_k \mbox{ and } z_{2k+1} = b_k \quad \forall k\in \Nbb.
\end{align}
We will always assume, without loss of generality, that $x_0\in A$.

It is well known that every alternating projections iteration for two convex intersecting sets globally converges
R-linearly to a feasibility solution if the collection of sets is what we call \emph{subtransversal} \cite{BauBorSIREV96}.
The later property and its at-point version is a specialization of metric subregularity to the context of set feasibility.

\subsection{Elemental regularity and subtransversality}
Convexity of the underlying sets has long been the standing assumption in analysis of projection algorithms.
The next definition characterizing regularity of nonconvex sets first appeared in \cite[Definition 5]{KruLukNgu16}
and encapsulates many of the regularity notions appearing elsewhere in the literature including
H\"older regularity \cite[Definition 2]{NolRon16}, relative ($\varepsilon,\delta$)-subregularity
\cite[Definition 2.9]{HesLuk13}, restricted $(\varepsilon,\delta)$-regularity \cite[Definition 8.1]{BauLukePhanWang13b},
Clarke regularity \cite[Definition 6.4]{VA}, super-regularity \cite[Definition 4.3]{LewLukMal09},
prox-regularity \cite[Definition 1.1]{PolRocThi00}, and of course convexity.   The connection of elemental regularity
of sets to the pointwise almost averaging property of their projectors is discussed later.
\begin{definition}[elemental regularity of sets]\label{d:set regularity}
    Let $A\subset\Ebb$ be nonempty and let $\paren{ a,\vbar}\in\gph\paren{\ncone{A}}$.
\begin{enumerate}[(i)]
   \item\label{d:geom subreg} The set $A$ is said to be {\em elementally subregular relative to
$ S\subset A$ at $\xbar\in A$ for $\paren{ a,\vbar}$  with constant $\varepsilon$}
 if there exists a neighborhood $U$ of $\xbar$ such that
   \begin{equation}\label{e:geom subreg}
     \ip{\vbar }{x -  a}\leq \varepsilon\norm{\vbar}\norm{x -  a},\quad \forall x\in  S \cap U.
   \end{equation}

   \item\label{d:uni geom subreg} The set $A$ is said to be {\em uniformly elementally subregular
relative to $ S\subset A$ at $\xbar$}  for $( a,\vbar)$ if, for any $\varepsilon>0$,
there exists a neighborhood $U$ (depending on $\varepsilon$) of $\xbar$  such that \eqref{e:geom subreg} holds.

 \item\label{d:geom reg} The set $ A$ is said to be {\em elementally regular at $\xbar$ for
$\paren{ a,\vbar}$ with constant $\varepsilon$} if there exists a neighborhood $V$ of $\vbar$ such
that, for each $v\in\ncone{ A}( a)\cap V$, the set $ A$ is elementally subregular relative to
$ S= A$ at $\xbar$ for $\paren{ a,v}$ with constant $\varepsilon$.
 \item\label{iv:uniform element regular} The set $ A$ is said to be {\em uniformly elementally regular
at $\xbar$ for $\paren{ a,\vbar}$} if, for any $\varepsilon>0$, the set $ A$ is elementally regular at 
$\xbar$ for $\paren{ a,\vbar}$ with constant $\varepsilon$.
\end{enumerate}
\end{definition}

The reference points $\xbar$ and $a$ in Definition \ref{d:set regularity},
need not be in $S$ and  $U$, respectively,
although these are the main cases of interest for us.
The properties are trivial for any constant $\varepsilon\ge 1$, so the only
case of interest is elemental (sub)regularity with constant $\varepsilon<1$.

\begin{proposition}[Proposition 4(vii) of \cite{KruLukNgu16}]\label{t:set regularity equivalences}
Let $A$ be closed and nonempty.
If $ A$ is convex, then it is
elementally regular at all points $x \in A$ for all
$(a,v)\in \gph{\ncone{A}}$ with constant $\varepsilon=0$
and the neighborhood $\Ebb$ for both $x$ and $a$.
\end{proposition}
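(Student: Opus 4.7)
The plan is to reduce the claim directly to the defining inequality of the normal cone of a closed convex set, via the identification of the limiting normal cone with the convex-analytic normal cone in this setting.

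First, I would recall that for a closed convex set $A\subset\Ebb$ and any $a\in A$, the proximal, Fr\'echet, and limiting normal cones all coincide with the normal cone in the sense of convex analysis (see, e.g., \cite{Mord06,VA}). In particular, $v\in\ncone{A}(a)$ if and only if
\[
\ip{v}{x-a}\leq 0\quad\forall x\in A.
\]
This is the only ingredient really needed.

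Next, fix any $x\in A$ and any pair $(a,\vbar)\in\gph(\ncone{A})$. I would take the neighborhood $V$ of $\vbar$ in Definition \ref{d:set regularity}\ref{d:geom reg} to be all of $\Ebb$, so that for each $v\in\ncone{A}(a)\cap V=\ncone{A}(a)$, the convex-normal-cone inequality above gives, for every $x'\in A$,
\[
\ip{v}{x'-a}\leq 0\leq \varepsilon\norm{v}\norm{x'-a}
\]
with $\varepsilon=0$. Hence taking the neighborhood $U$ of $x$ in Definition \ref{d:set regularity}\ref{d:geom subreg} also to be all of $\Ebb$, the set $A$ is elementally subregular relative to $S=A$ at $x$ for $(a,v)$ with constant $0$. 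This is exactly the definition of elemental regularity of $A$ at $x$ for $(a,\vbar)$ with constant $\varepsilon=0$ and neighborhood $\Ebb$ in both positions.

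There is essentially no obstacle: the entire content of the proposition is that the defining inequality $\langle v,x-a\rangle\le 0$ for $v$ in the normal cone to a convex set is uniform in $x\in A$, in $a\in A$, and in $v\in\ncone{A}(a)$. The only subtlety worth flagging explicitly in the write-up is that the limiting normal cone used in Section \ref{s:notation} coincides with the convex normal cone in the convex case, which justifies invoking the standard convex inequality.
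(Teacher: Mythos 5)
Your argument is correct: the paper itself offers no proof of this proposition, citing \cite{KruLukNgu16}, and your reduction to the convex-analytic normal cone inequality $\ip{v}{x'-a}\le 0$ for all $x'\in A$ (valid since the proximal, Fr\'echet and limiting normal cones all coincide with the convex normal cone for closed convex sets) is exactly the standard argument behind the cited result. Taking $U=V=\Ebb$ then yields elemental (sub)regularity with constant $\varepsilon=0$ as claimed, so nothing is missing.
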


The next result shows the implications of elemental regularity of sets for regularity of the
corresponding projectors.

\begin{thm}[projectors onto elementally subregular sets, Theorem 3.1 of \cite{LukNguTam17}]
\label{t:subreg proj-ref}
Let ${A}\subset\Ebb$ be nonempty closed, and let $U$ be a neighborhood of $\xbar\in {A}$.
Let $ S\subset{A}\cap U$ and $ S'\equiv P_{{A}}^{-1}( S)\cap U$.
If ${A}$ is elementally sub\-reg\-ul\-ar at $\xbar$
relative to $ S'$ for each
\[
(a,v)\in V\equiv\set{(z, w)\in\gph\pncone{{A}}}{ z+w\in U\und z\in P_{A}(z+w)}
\]
 with constant $\varepsilon$ on the neighborhood $U$, then the following hold.
\begin{enumerate}[(i)]
\item\label{t:subreg proj-ref1}
The projector $P_{A}$ is pointwise almost nonexpansive at each $y\in  S$ on
$U$
with violation $\varepsilon'\equiv 2\varepsilon+\varepsilon^2$. That is
\[
\norm{x -y}\leq \sqrt{1+\varepsilon'}\norm{x'-y}\quad \forall y\in  S \quad \forall x'\in U\quad \forall x\in P_{A} x'.
\]
\item\label{t:subreg proj-ref2}
The projector $P_{A}$ is pointwise almost firmly nonexpansive at each $y\in  S$ on $U$
with violation $\varepsilon'_2\equiv2\varepsilon+2\varepsilon^2$. That is
\[
  \begin{aligned}
\norm{x -y}^2+\norm{x'-x }^2 &\leq (1+\varepsilon'_2)\norm{x'-y}^2\\
\forall y\in  S\quad \forall x' &\in U\quad \forall x\in P_{A} x'.
\end{aligned}
\]
\end{enumerate}
\end{thm}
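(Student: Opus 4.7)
The plan is to apply the elemental subregularity hypothesis at a carefully chosen pair $(a,v)$ and then convert the resulting inner-product inequality into the desired norm bounds via the cosine rule and the basic projection inequality.

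Fix $y\in S$, $x'\in U$ and $x\in P_A x'$. The cornerstone step is the identification
\[
(a,v) := (x,\,x'-x)\;\in\;V.
\]
Indeed, $v=x'-x\in\pncone{A}(x)$ holds by definition of the proximal normal cone precisely because $x\in P_A x'$, and $a+v=x'\in U$ together with $x\in P_A(x+v)$ are immediate. I also need $y\in S'$ for the subregularity inequality to apply at $z=y$: this holds since $y\in S\subset A\cap U$ forces $y\in P_A y$, hence $y\in P_A^{-1}(S)\cap U=S'$. Invoking Definition \ref{d:set regularity}\ref{d:geom subreg} with $(a,v)=(x,x'-x)$ and $z=y$ yields the single inequality
\[
\langle x'-x,\; y-x\rangle \;\le\; \varepsilon\,\|x'-x\|\,\|y-x\|,
\]
which together with the elementary projection estimate $\|x'-x\|\le\|x'-y\|$ (valid because $y\in A$ and $x\in P_A x'$) will drive both assertions.

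For \ref{t:subreg proj-ref1}, I would split
\[
\|y-x\|^2 \;=\; \langle y-x,\, y-x'\rangle + \langle y-x,\, x'-x\rangle,
\]
apply Cauchy--Schwarz to the first term and the cornerstone estimate together with $\|x'-x\|\le\|x'-y\|$ to the second, obtaining
\[
\|y-x\|^2 \;\le\; \|y-x\|\|y-x'\| + \varepsilon\|y-x\|\|y-x'\| \;=\; (1+\varepsilon)\|y-x\|\|y-x'\|.
\]
Dividing by $\|y-x\|$ (the case $y=x$ being trivial) gives $\|y-x\|\le(1+\varepsilon)\|y-x'\|=\sqrt{1+\varepsilon'}\,\|y-x'\|$ as claimed.

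For \ref{t:subreg proj-ref2}, I would use the cosine identity
\[
\|x-y\|^2 \;=\; \|x'-y\|^2 - \|x'-x\|^2 + 2\langle x'-x,\, y-x\rangle,
\]
whence
\[
\|x-y\|^2 + \|x'-x\|^2 \;\le\; \|x'-y\|^2 + 2\varepsilon\|x'-x\|\|y-x\|
\]
by the cornerstone estimate. Finally, bounding the cross term by combining $\|x'-x\|\le\|x'-y\|$ with the already-proved estimate $\|y-x\|\le(1+\varepsilon)\|x'-y\|$ yields $2\varepsilon\|x'-x\|\|y-x\|\le (2\varepsilon+2\varepsilon^2)\|x'-y\|^2=\varepsilon'_2\|x'-y\|^2$, completing the proof. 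The only subtlety is the verification that $(x,x'-x)\in V$ and $y\in S'$; once those set memberships are checked, the rest is a short algebraic manipulation.
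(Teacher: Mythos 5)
Your proof is correct and follows essentially the same route as the original argument in \cite[Theorem 3.1]{LukNguTam17} (which this paper cites rather than reproduces): one instantiates the elemental subregularity hypothesis at $(a,v)=(x,x'-x)\in V$ with test point $y\in S\subset S'$, and combines the resulting inequality $\ip{x'-x}{y-x}\le\varepsilon\norm{x'-x}\norm{y-x}$ with Cauchy--Schwarz, the projection bound $\norm{x'-x}\le\norm{x'-y}$, and the cosine identity to get both violations $\varepsilon'=2\varepsilon+\varepsilon^2$ and $\varepsilon'_2=2\varepsilon+2\varepsilon^2$. Your membership checks $(x,x'-x)\in V$ and $y\in S'$ are exactly the points that need verifying, and they are handled correctly.
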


In addition to the pointwise almost averaging property, metric subregularity plays a central role in the
general theory of Section \ref{s:lmfp}.  In the context of set feasibility, this is translated to what we call
subtransversality below.
What we present as the definition of subtransversality is
the simplified version of \cite[Definition 3.2(i)]{LukNguTam17}.

\begin{defn}[subtransversality]\label{d:MAINi}
Let $A$ and $B$ be closed subsets of $\Ebb$, let $\Ebb^2$ be endowed with some norm and let $\Gamma\subset \Ebb^2$.
The collection of sets $\{A,B\}$ is said to be subtransversal at $\ubar = (\xbar_1,\xbar_2) \in A\times B$ for $\wbar =
(\ybar_1,\ybar_2) \in \paren{P_A-\Id}(\xbar_2)\times \paren{P_B-\Id}(\xbar_1)$ relative to $\Gamma$ if there exist numbers
$\delta>0$ and $\kappa\ge0$
such that
\begin{gather*}\label{e:T1-1}
\dist\left(u,\paren{\paren{P_B-\Id}^{-1}(\ybar_2)\times \paren{P_A-\Id}^{-1}(\ybar_1)}\cap\Gamma\right)
\le \kappa\dist\paren{\wbar,\paren{P_A-\Id}(x_2)\times \paren{P_B-\Id}(x_1)}
\end{gather*}
holds true for all $u = (x_1,x_2) \in \B_{\delta}(\ubar )\cap \Gamma.$

When $\Gamma=\Ebb^2$, the quantifier ``relative to'' is dropped.
\end{defn}
The reference point $\ubar$ in Definition \ref{d:MAINi} need not be in $\Gamma$,
although this is the only case of interest for us.
The following characterization of subtransversality at common points
 will play a fundamental role in our subsequent analysis.

\begin{propn}[subtransversality at common points, Proposition 3.3 of \cite{LukNguTam17}]\label{t:metric characterization}
Let $A$ and $B$ be closed subsets of $\Ebb$.  Let $\Ebb^2$ be endowed with $2$-norm
\[
\norm{(x_1,x_2)}_2= \paren{\norm{x_1}^2_{\Ebb}+\norm{x_2}^2_{\Ebb}}^{1/2}\quad \forall\; (x_1,x_2)\in \Ebb^2.
\]
 The collection of sets $\{A,B\}$ is subtransversal relative to
\begin{equation}\label{Gamma_set}
\Gamma\equiv\set{u=(x,x)\in\Ebb^2}{x\in \Lambda\subset \Ebb}
\end{equation}
at $\ubar = \paren{\xbar,\xbar}$ with $\xbar\in A\cap B$ for
$\ybar=0_{\Ebb^2}$ if and only if there exist numbers $\delta>0$ and $\kappa\ge0$ such that
\begin{equation}\label{eq:locallinear+}
\dist\paren{x,A\cap B\cap \Lambda}\leq \kappa \max\paren{\dist\paren{x,A},\dist\paren{x,B}}
\quad\forall x\in \Ball_{\delta}(\xbar)\cap \Lambda.
\end{equation}
\end{propn}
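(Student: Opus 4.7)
The plan is to unpack the subtransversality definition term by term for the specific choices $\ubar=(\xbar,\xbar)$ with $\xbar\in A\cap B$, $\wbar=0_{\Ebb^2}$, and $\Gamma$ the ``diagonal'' set defined in \eqref{Gamma_set}, then recognize that the resulting inequality is equivalent to \eqref{eq:locallinear+} up to an explicit constant factor coming from the equivalence of the $\ell_2$ and $\ell_\infty$ norms on $\R^2$.

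First I would identify the two basic preimages appearing in Definition \ref{d:MAINi}. Since $x\in P_A x \iff x\in A$, we get $(P_A-\Id)^{-1}(0)=A$ and similarly $(P_B-\Id)^{-1}(0)=B$, so the set whose distance we are measuring becomes
\[
\bigl((P_B-\Id)^{-1}(0)\times(P_A-\Id)^{-1}(0)\bigr)\cap\Gamma
=\{(x,x)\in\Ebb^2:\ x\in A\cap B\cap\Lambda\}.
\]
Next, for any point $u=(x,x)\in\Gamma$ and using the $2$-norm on $\Ebb^2$,
\[
\dist\bigl(u,(B\times A)\cap\Gamma\bigr)
=\inf_{y\in A\cap B\cap\Lambda}\sqrt{2}\,\|x-y\|
=\sqrt{2}\,\dist\bigl(x,A\cap B\cap\Lambda\bigr),
\]
while on the right-hand side of the subtransversality inequality,
\[
\dist\bigl(0_{\Ebb^2},(P_A-\Id)(x)\times(P_B-\Id)(x)\bigr)
=\sqrt{\dist(x,A)^2+\dist(x,B)^2},
\]
since $\dist(0,(P_A-\Id)(x))=\inf_{a\in P_A x}\|a-x\|=\dist(x,A)$ and similarly for $B$. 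Also, a point $u=(x,x)\in\Gamma$ lies in $\B_\delta(\ubar)$ if and only if $\|x-\xbar\|<\delta/\sqrt{2}$, so the neighborhood constraint in $\Ebb^2$ translates to a neighborhood constraint on $x\in\Lambda$ of radius $\delta/\sqrt{2}$ around $\xbar$.

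Putting these three calculations together, subtransversality at $(\xbar,\xbar)$ for $0_{\Ebb^2}$ relative to $\Gamma$ is equivalent to the existence of $\delta'>0$ and $\kappa'\ge 0$ such that
\[
\dist(x,A\cap B\cap\Lambda)\le \kappa'\sqrt{\dist(x,A)^2+\dist(x,B)^2}
\qquad\forall x\in\Ball_{\delta'}(\xbar)\cap\Lambda.
\]
To finish I would apply the elementary bound
\[
\max(a,b)\le\sqrt{a^2+b^2}\le\sqrt{2}\max(a,b)\qquad\forall a,b\ge 0
\]
with $a=\dist(x,A)$ and $b=\dist(x,B)$. Each direction of ``$\iff$'' then follows by absorbing a factor of at most $\sqrt{2}$ into the constant $\kappa$; the neighborhood $\delta$ is simply rescaled by the same factor.

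The main obstacle is really just bookkeeping: keeping straight which coordinate of the product corresponds to which preimage and which distance, and correctly converting radii and constants under the $2$-norm on $\Ebb^2$. There is no substantive obstruction once one observes the identifications $(P_A-\Id)^{-1}(0)=A$ and $(P_B-\Id)^{-1}(0)=B$ and the computation of $\dist(0,(P_A-\Id)(x))=\dist(x,A)$.
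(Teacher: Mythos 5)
Your proposal is correct and complete: the identifications $(P_A-\Id)^{-1}(0)=A$, $(P_B-\Id)^{-1}(0)=B$, $\dist\paren{0,(P_A-\Id)(x)}=\dist(x,A)$, the factor $\sqrt{2}$ from the diagonal $\Gamma$ and the $2$-norm, and the equivalence of $\max$ with the Euclidean combination of the two distances are exactly what is needed, with all constants and radii only rescaled. Note that the paper itself states this proposition by citation to \cite[Proposition 3.3]{LukNguTam17} without reproducing a proof, so there is nothing in-text to compare against; your direct unpacking of Definition \ref{d:MAINi} is the standard argument (the only cosmetic addition would be the one-line remark that $\ybar=0_{\Ebb^2}$ is an admissible reference vector because $\xbar\in A\cap B$ gives $0\in(P_A-\Id)(\xbar)\times(P_B-\Id)(\xbar)$).
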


The relative set $\Gamma\subset \Ebb^2$ given by \eqref{Gamma_set} which makes the notion of
subtransversality consistent in the product
space can clearly be identified with the set $\Lambda\subset \Ebb$.
We will therefore more often than not use the terms ``relative to $\Lambda$'' instead of ``relative to $\Gamma$''
and ``at $\xbar$'' instead of ``at $(\xbar,\xbar)$ for $0_{\Ebb^2}$'' when discussing subtransversality at
common points where the product-space structure is no longer needed.

\begin{remark}\label{sr<inf}
It follows from  Proposition \ref{t:metric characterization} that the exact lower bound of all numbers $\kappa$ 
such that condition
\eqref{eq:locallinear+} is satisfied,
denoted $\sr$, characterizes the subtransversality of the collection of sets at common points. More specifically, the collection of sets
$\{A,B\}$ is
subtransversal at
$\xbar$ if and only if $\sr<+\infty$.
\end{remark}

The property \eqref{eq:locallinear+} with $\Lambda=\Ebb$
 has been around for decades under
the names of (local) \emph{linear regularity}, \emph{metric regularity}, \emph{linear coherence},
\emph{metric inequality}, and \emph{subtransversality}; cf. \cite{BauBorSVA,
BauBorSIREV96,Ioffe89,Ioffe00,HesLuk13,
NgaiThera01,PenotCWD,ZheNg08,DruIofLew15,Kruger06}.
We refer the reader to the recent articles \cite{KruLukNgu16,KruLukNgu17} in which a number of necessary
and/or sufficient characterizations of subtransversality are discussed.
The next characterization of subtransversality, which is the
relativized version of \cite[Theorem 1(iii)]{KruLukNgu16}, will play a key role in proving the necessary
condition results in Sections \ref{subs:N&S} and \ref{s:AP_with_Convex}.
This characterization is actually implied in the proof of \cite[Theorem 6.2]{DruIofLew15} where
 the property called \emph{intrinsic transversality} \cite[Definition 3.1]{DruIofLew15} was shown
to imply subtransversality.

\begin{proposition}[characterization of subtransversality at common points]\label{t:MAINid}
The collection of sets $\{A,B\}$ is subtransversal at $\xbar \in A\cap B$ relative to $\Lambda$ if and only if there exist numbers
$\delta>0$ and $\kappa\ge 0$
such that
\begin{equation}\label{e:P3-1}
\dist(x,A\cap B\cap \Lambda)\le \kappa\dist(x,B)\quad \forall x\in A\cap\B_{\delta}(\xbar )\cap\Lambda.
\end{equation}
Moreover,
\begin{equation}\label{P3-2}
\srr \le \sr\le 1+2\srr,
\end{equation}
where $\srr$ is the exact lower bound of all numbers $\kappa$ such that condition (\ref{e:P3-1}) is satisfied.
\end{proposition}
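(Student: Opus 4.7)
The plan is to prove the two implications separately while tracking constants to obtain both bounds in (\ref{P3-2}). The forward direction will be an immediate specialization of Proposition~\ref{t:metric characterization}, whereas the reverse will mimic the projection-plus-triangle-inequality argument implicit in \cite[Theorem~6.2]{DruIofLew15}.

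For the easy direction, I would invoke Proposition~\ref{t:metric characterization} to rewrite subtransversality as $\dist(x,A\cap B\cap\Lambda)\le \sr\cdot\max\{\dist(x,A),\dist(x,B)\}$ for all $x\in \B_{\delta}(\xbar)\cap\Lambda$, then restrict to $x\in A$: the term $\dist(x,A)$ vanishes and (\ref{e:P3-1}) follows with the same $\delta$ and constant $\kappa=\sr$, giving $\srr\le\sr$.

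For the reverse direction, I would assume (\ref{e:P3-1}) holds on $A\cap \B_{\delta}(\xbar)\cap\Lambda$ with some $\kappa\ge 0$, fix an arbitrary $x\in \B_{\delta/2}(\xbar)\cap\Lambda$, and choose a projection $a\in P_A(x)$. Using $\xbar\in A$, one gets $\|x-a\|=\dist(x,A)\le \|x-\xbar\|<\delta/2$, hence $a\in A\cap \B_{\delta}(\xbar)$. Applying (\ref{e:P3-1}) at $a$, together with the triangle bound $\dist(a,B)\le \|a-x\|+\dist(x,B)\le 2\max\{\dist(x,A),\dist(x,B)\}$, gives $\dist(a,A\cap B\cap\Lambda)\le 2\kappa\max\{\dist(x,A),\dist(x,B)\}$. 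One more triangle inequality then produces
\[
\dist(x,A\cap B\cap\Lambda)\le \|x-a\|+\dist(a,A\cap B\cap\Lambda)\le (1+2\kappa)\max\{\dist(x,A),\dist(x,B)\},
\]
so Proposition~\ref{t:metric characterization} yields subtransversality on $\B_{\delta/2}(\xbar)\cap\Lambda$ with constant at most $1+2\kappa$. Taking the infimum over admissible $\kappa$ delivers $\sr\le 1+2\srr$.

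The main technical point to watch is that (\ref{e:P3-1}) is invoked at $a\in P_A(x)$, which a priori lies only in $A$, not necessarily in $\Lambda$. This is automatic in the unrelativized case $\Lambda=\Ebb$ corresponding directly to \cite[Theorem~6.2]{DruIofLew15}; in the relative setting driving the applications of Sections~\ref{s:AP_for_NonConvex}--\ref{s:AP_with_Convex}, $\Lambda$ typically carries enough structure (e.g.\ an affine subspace containing $\xbar$) that a projection $a\in P_A(x)\cap\Lambda$ exists near $\xbar$, and the same chain of estimates applies verbatim. Apart from this subtlety, the argument is a routine combination of the restricted hypothesis and two triangle inequalities.
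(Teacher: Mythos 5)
Your forward implication is correct, and your reverse argument (pick $a\in P_A(x)$, apply \eqref{e:P3-1} at $a$, and use two triangle inequalities to get the constant $1+2\kappa$) is precisely the standard proof of the unrelativized statement that the paper itself points to via \cite[Theorem 1(iii)]{KruLukNgu16} and the proof of \cite[Theorem 6.2]{DruIofLew15}; for $\Lambda=\Ebb$ your proof is complete and gives both bounds in \eqref{P3-2}.

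The gap is exactly the point you flag and then wave away: to invoke \eqref{e:P3-1} at $a\in P_A(x)$ you need $a\in\Lambda$, and it is \emph{not} true that an affine subspace $\Lambda$ containing $\xbar$ ``carries enough structure'' for a projection in $P_A(x)\cap\Lambda$ to exist near $\xbar$. Take $\Ebb=\R^2$, $\Lambda=B=\R\times\{0\}$, $A=\{(t,t^2):t\in\R\}$, $\xbar=(0,0)$. For $x=(t,0)\in\Lambda$ with $t\neq0$ small, $P_A(x)$ is a single point lying off $\Lambda$, so your chain of estimates cannot even be started. Worse, in this configuration $A\cap\B_\delta(\xbar)\cap\Lambda=\{\xbar\}$, so \eqref{e:P3-1} holds trivially (with $\kappa=0$), while $\dist\paren{x,A\cap B\cap\Lambda}=|t|$ and $\max\{\dist(x,A),\dist(x,B)\}$ is of order $t^2$, so the characterization \eqref{eq:locallinear+} of subtransversality relative to $\Lambda$ fails. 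This shows the ``same chain of estimates applies verbatim'' claim is not a removable technicality: the reverse implication in the relative setting genuinely requires an additional compatibility hypothesis tying $\Lambda$ to the sets --- for instance that $P_A$ maps $\Lambda\cap\B_\delta(\xbar)$ into $\Lambda$, or an estimate of the form $\dist(x,A\cap\Lambda)\le C\dist(x,A)$ on $\Lambda$ near $\xbar$ --- which is what effectively holds in the paper's applications, where $T_{AP}$ is assumed to map $\Lambda$ into itself. As written, your proof establishes the proposition only for $\Lambda=\Ebb$ (or under such an invariance assumption), and the relative case remains unproved; you should either add and use such a hypothesis explicitly or restrict the argument accordingly.
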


\begin{remark}
In light of Remark \ref{sr<inf} and the two inequalities in \eqref{P3-2}, a collection of sets
$\{A,B\}$ is subtransversal at $\xbar\in A\cap B$ if and only if $\srr<+\infty$.
For the simplicity in terms of presentation, in the sequel, we will frequently use this fact without repeating the argument.
\end{remark}
Both inequalities in \eqref{P3-2} can be strict as shown in the following example.
\begin{eg}
Let $A$ and $B$ be two lines in $\mathbb{R}^2$ forming an angle $\pi/3$ at the intersection point $\bx$.
One can easily check that
\[
\srr = 2/\sqrt{3} < 2 = \sr < 1+2\srr = 1+4/\sqrt{3}.
\]
\end{eg}

The connection of subtransversality to metric subregularity was presented for more general cyclic projections in
\cite[Proposition 3.4]{LukNguTam17}.  We present here the simplified version for two sets with possibly empty intersection.

\begin{propn}[metric subregularity
for alternating projections]\label{t:msr cp}
Let $A$ and $B$ be closed nonempty sets.
Let $\xbar_1\in \Fix T_{AP}$ and $\xbar_2\in P_B\xbar_1$ such that $\xbar_1\in P_A\xbar_2$ and let $\Gamma$ be
the affine subspace
\[
\Gamma := \left\{(x,x+\xbar_2-\xbar_1) : x\in \Ebb\right\}\subset \Ebb^2.
\]
Define $\Phi\equiv T_{AP}-\Id$.
Suppose the following hold:
\begin{enumerate}[(a)]
\item\label{t:msr cp i} the collection of sets $\{A, B\}$ is subtransversal at $\ubar = (\xbar_1,\xbar_2)$ for $\ybar = (\xbar_1-\xbar_2,\xbar_2-\xbar_1)$
relative to $\Gamma$ with constant $\kappa$ and neighborhood $U$ of $\ubar$;
 \item\label{t:msr cp ii}
there exists a positive constant $\sigma$ such that
\begin{align}\label{constraint}
\dist\paren{\ybar,(P_A-\Id)(x_2)\times (P_B-\Id)(x_1)} \leq \sigma \dist\paren{0,\Phi(x)}\quad \forall x=(x_1,x_2)\in U\cap \Gamma.
\end{align}
\end{enumerate}
Then the mapping $\Phi$ is metrically subregular on $U$ for $0$ relative to
$\Gamma$ with constant $\kappa\sigma$.
\end{propn}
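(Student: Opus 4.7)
The plan is a transitivity argument: hypothesis (a) bounds the distance from $u$ to the subtransversality ``target set'' by $\kappa$ times a product-space residual, and hypothesis (b) bounds that residual by $\sigma\,\dist(0,\Phi(u))$, so that chaining produces a constant of $\kappa\sigma$.  The only substantive bookkeeping is in matching the subtransversality target set with $\Phi^{-1}(0)\cap\Gamma$.

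First I would unpack the set
\[
	S := \paren{(P_B-\Id)^{-1}(\ybar_2)\times (P_A-\Id)^{-1}(\ybar_1)}\cap\Gamma.
\]
For any $(x_1,x_2)\in\Gamma$ the defining relation forces $x_2-x_1=\xbar_2-\xbar_1=\ybar_2=-\ybar_1$.  Substituting into the preimage conditions, $x_1\in(P_B-\Id)^{-1}(\ybar_2)$ becomes $x_2=x_1+\ybar_2\in P_B(x_1)$, and $x_2\in(P_A-\Id)^{-1}(\ybar_1)$ becomes $x_1=x_2+\ybar_1\in P_A(x_2)$.  Together these give $x_1\in P_AP_B(x_1)=T_{AP}(x_1)$, i.e.\ $x_1\in\Fix T_{AP}=\Phi^{-1}(0)$, and conversely every such $x_1$, together with a valid intermediate projection $x_2\in P_B(x_1)$ satisfying $x_1\in P_A(x_2)$, produces a pair in $S$.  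Under the natural identification of points in $\Gamma$ with $\Ebb$ via the first coordinate, this yields $S=\Phi^{-1}(0)\cap\Gamma$.

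With this identification in hand, for any $u=(x_1,x_2)\in U\cap\Gamma$ hypothesis (a) gives
\[
	\dist\paren{u,\Phi^{-1}(0)\cap\Gamma}=\dist(u,S)\leq \kappa\dist\paren{\ybar,(P_A-\Id)(x_2)\times(P_B-\Id)(x_1)},
\]
and hypothesis (b) bounds the right-hand factor by $\sigma\,\dist(0,\Phi(u))$.  Chaining produces
\[
	\dist\paren{u,\Phi^{-1}(0)\cap\Gamma}\leq \kappa\sigma\,\dist(0,\Phi(u)) \qquad \forall\, u\in U\cap\Gamma,
\]
which is precisely metric subregularity of $\Phi$ on $U$ for $0$ relative to $\Gamma$ with constant $\kappa\sigma$.

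The main obstacle is the bookkeeping behind the set identification, not the chaining.  Subtransversality is formulated through the product-space preimages $(P_B-\Id)^{-1}(\ybar_2)$ and $(P_A-\Id)^{-1}(\ybar_1)$, whereas metric subregularity is formulated via $\Phi^{-1}(0)$; the diagonal-offset structure built into $\Gamma$ is what collapses the two-sided preimage condition into the single cyclic inclusion defining $\Fix T_{AP}$.  Hypothesis (b) plays the complementary role of translating the aggregated product-space residual on the right-hand side of subtransversality into the single residual $\dist(0,\Phi(u))$ of the composite iteration, a step which is not automatic in general and is the reason a separate nondegeneracy-style hypothesis is imposed alongside (a).
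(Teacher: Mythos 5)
The paper never proves this proposition in-house: it is quoted as the two-set specialization of \cite[Proposition 3.4]{LukNguTam17}, so there is no internal proof to compare against; your chaining argument, however, is exactly the intended one (subtransversality bounds $\dist(u,S)$ by $\kappa$ times the product-space residual, condition \eqref{constraint} converts that residual into $\sigma\,\dist\paren{0,\Phi(u)}$, and the constants multiply). One correction is needed in the bookkeeping step: the identification $S=\Phi^{-1}(0)\cap\Gamma$ is overstated. The forward inclusion is fine --- if $(x_1,x_2)\in\Gamma$ with $x_2\in P_Bx_1$ and $x_1\in P_Ax_2$, then $x_1\in P_AP_Bx_1$, so $x_1\in\Fix T_{AP}$ --- but the converse fails in general: a fixed point $x_1$ of $T_{AP}$ only guarantees \emph{some} intermediate point $b\in P_Bx_1$ with $x_1\in P_Ab$, whereas membership of $(x_1,x_1+\xbar_2-\xbar_1)$ in $S$ requires that the specific point $x_1+\xbar_2-\xbar_1$ be such an intermediate projection, i.e.\ that $x_1$ realize exactly the gap vector $\xbar_2-\xbar_1$ built into $\Gamma$; nothing forces this for fixed points other than $\xbar_1$. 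Fortunately only the inclusion $S\subseteq\Phi^{-1}(0)\cap\Gamma$ (under your first-coordinate identification) is used: it yields $\dist\paren{u,\Phi^{-1}(0)\cap\Gamma}\le\dist(u,S)$, and your chain then goes through verbatim with this ``$\le$'' in place of your ``$=$''. With that one step rephrased, the argument is sound and delivers the constant $\kappa\sigma$ as claimed.
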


\subsection{Necessary and sufficient conditions for local linear convergence}\label{subs:N&S}

It was established in \cite[Corollary 13(a)]{HesLuk13} that {\em local linear regularity} of the collection of
sets (with a reasonably good quantitative constant as always for convergence analysis of
nonconvex alternating projections) is sufficient for linear monotonicity of the method for
$(\varepsilon,\delta)$-subregular sets.  This result is updated here in light of more recent results.

\begin{propn}[convergence of alternating projections with nonempty intersection]\label{t:cp ncvx}
Let $S$ be a nonempty subset of $A\cap B$.
Let $U$ be a neighborhood of $S$ such that
\begin{equation}\label{e:Uj}
P_{A}(U)\subseteq U \und P_B(U)\subseteq U.
\end{equation}
Let $\Lambda$ be an affine subspace
 containing $S$
 such that $\mmap{T_{AP}}{\Lambda}{\Lambda}$.
Define $\Phi\equiv T_{AP}-\Id$.
Let the sets $A$ and $B$ be elementally subregular at all $\xhat\in S$ relative to $\Lambda$ respectively for each
\begin{eqnarray*}
(a, v_{A})\in V_{A}&\equiv&\set{(z, w)\in\gph\pncone{{A}}}{ z+w\in U\und z\in P_{{A}}(z+w)} \\
(b, v_{B})\in V_{B}&\equiv&\set{(z, w)\in\gph\pncone{{B}}}{ z+w\in U\und z\in P_{{B}}(z+w)} \\
\end{eqnarray*}
with respective constants $\varepsilon_A, \varepsilon_B\in [0,1)$ on the neighborhood $U$.
Suppose that the following hold:
\begin{enumerate}[(a)]
\item\label{t:cp ncvx ii} for each $\xhat\in S$, the collection of sets
$\{A, B\}$ is subtransversal at
$\xhat$ relative to
$\Lambda$ with constant $\kappa$ on the neighborhood $U$;
 \item\label{t:cp ncvx iii}
there exists a positive constant $\sigma$ such that condition \eqref{constraint} holds true;
\item\label{t:cp ncvx iv} $\dist(x, S) \le \dist\paren{x, A\cap B\cap \Lambda}$ for all $x\in U\cap \Lambda$;
\item $\varepsilontilde_A+\varepsilontilde_B+\varepsilontilde_A\varepsilontilde_B < \frac{1}{2(\kappa\sigma)^2}$, where
 $\varepsilontilde_A\equiv
  4\varepsilon_A\frac{1+\varepsilon_A}{\paren{1-\varepsilon_A}^2}$
   and $\varepsilontilde_B\equiv
  4\varepsilon_B\frac{1+\varepsilon_B}{\paren{1-\varepsilon_B}^2}$.
\end{enumerate}
Then every sequence $\paren{x_k}_{k\in \Nbb}$ generated by $x_{k+1}\in T_{AP} x_k$
seeded by any point $x_0\in A\cap U\cap\Lambda$ is linearly monotone with respect to $S$ with constant
\begin{equation*}
c\equiv\sqrt{1+\varepsilontilde_A+\varepsilontilde_B+\varepsilontilde_A\varepsilontilde_B - \frac{1}{2(\kappa\sigma)^2}
}\;\;\; <\; 1.
\end{equation*}
Consequently,
 $\dist\paren{x_k,S}\to 0$ at least Q-linearly with rate $c$.
\end{propn}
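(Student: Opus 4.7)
The strategy is to reduce the claim to the abstract criterion Theorem~\ref{t:metric subreg convergence}. That theorem needs two inputs about $T_{AP}$ on the tube $(\Fix T_{AP}+\Bbb_\delta)\cap\Lambda$: a pointwise almost averaged property at points of $\Fix T_{AP}\cap\Lambda$, and metric subregularity of $\Phi:=T_{AP}-\Id$ at $0$ relative to $\Lambda$. Assumptions (a)--(d) are designed so that each input can be produced from results already in the excerpt.

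First I would deduce almost averagedness of the projectors from elemental subregularity of the sets. The assumption on $A$ (resp.\ $B$) relative to $\Lambda$ at each $\xhat\in S$ for every normal in $V_A$ (resp.\ $V_B$) is exactly what Theorem~\ref{t:subreg proj-ref} requires, and its part~\ref{t:subreg proj-ref2} gives pointwise almost firm nonexpansivity of $P_A$ and $P_B$ at each $y\in S$ on $U$. By Proposition~\ref{t:average char}, this is the same as pointwise almost averaged with averaging constant $1/2$ and violations controlled by $\varepsilon_A$ and $\varepsilon_B$. The invariance hypothesis~\eqref{e:Uj}, together with the affine-subspace structure of $\Lambda$ (which keeps intermediate iterates in $\Lambda$), ensures that the whole AP-orbit lies in $U\cap\Lambda$, so these pointwise estimates apply along the iteration. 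A short composition argument then shows that $T_{AP}=P_AP_B$ is pointwise almost averaged at each $\xhat\in S$ on $U\cap\Lambda$: the standard rule that composing $\alpha_1$- and $\alpha_2$-averaged mappings yields an $((\alpha_1+\alpha_2-2\alpha_1\alpha_2)/(1-\alpha_1\alpha_2))$-averaged mapping produces averaging constant $\alpha=2/3$ from $\alpha_1=\alpha_2=1/2$, and combining the two characterizations~\eqref{e:average char iii} (with a common reference point $\xhat\in S$) produces a composite violation which, after simplification, should be exactly $\varepsilontilde_A+\varepsilontilde_B+\varepsilontilde_A\varepsilontilde_B$ with $\varepsilontilde_A,\varepsilontilde_B$ as in hypothesis~(d).

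Next I would produce the metric subregularity of $\Phi$. Hypotheses~(a) and~(b) are exactly the hypotheses of Proposition~\ref{t:msr cp}, which delivers metric subregularity of $\Phi$ on $U$ at $0$ relative to $\Gamma$, and hence (via the identification in Proposition~\ref{t:metric characterization}) relative to $\Lambda$, with constant $\kappa\sigma$. Substituting $\alpha=2/3$ and $\kappa\sigma$ into the rate formula~\eqref{e:msr rate const} of Theorem~\ref{t:metric subreg convergence} gives
\[
\tfrac{1-\alpha}{\alpha(\kappa\sigma)^2}\;=\;\tfrac{1}{2(\kappa\sigma)^2},
\]
so the rate reduces to precisely the $c$ stated in the proposition, and hypothesis~(d) secures $c<1$. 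Theorem~\ref{t:metric subreg convergence} then yields
\[
\dist(x^+,\Fix T_{AP}\cap\Lambda)\;\le\;c\,\dist(x,\Fix T_{AP}\cap\Lambda)
\]
for every $x\in(\Fix T_{AP}+\Bbb_\delta)\cap\Lambda$ and every $x^+\in T_{AP}x$. Since $S\subseteq \Fix T_{AP}\cap\Lambda$, hypothesis~(c) upgrades this inequality to $\dist(x^+,S)\le c\,\dist(x,S)$, which is the claimed linear monotonicity with respect to $S$; iterating gives $\dist(x_k,S)\to0$ at least Q-linearly at rate $c$.

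I expect the main obstacle to be the violation-composition bookkeeping in the first step. Theorem~\ref{t:subreg proj-ref} is pointwise at $y\in S$, so invoking almost averagedness of $P_A$ at the intermediate point $b_k=P_Bx_k$ requires a careful choice of reference point $\xhat\in S$ that works for \emph{both} projectors (this is where hypothesis~(c) again plays a role, since it keeps the nearest point of $\Fix T_{AP}\cap\Lambda$ inside $S$), and then the two quadratic estimates of the form~\eqref{e:average char iii} must be chained. The precise algebraic shape $4\varepsilon(1+\varepsilon)/(1-\varepsilon)^2$ for $\varepsilontilde$ is not immediate from Theorem~\ref{t:subreg proj-ref}(ii) alone and will emerge only after squaring, using the almost nonexpansive bound $(1+2\varepsilon+\varepsilon^2)\|\cdot\|^2$ from part~(i) to absorb a cross term, and dividing by $(1-\varepsilon)^2$; verifying that the arithmetic produces exactly the $\varepsilontilde_A,\varepsilontilde_B$ written in~(d) is the non-trivial calculation I would anticipate getting bogged down in.
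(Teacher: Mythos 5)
The decisive gap is your last step. Applying Theorem \ref{t:metric subreg convergence} as a black box yields $\dist(x^+,\Fix T_{AP}\cap\Lambda)\le c\,\dist(x,\Fix T_{AP}\cap\Lambda)$, and you then assert that hypothesis (c) ``upgrades'' this to $\dist(x^+,S)\le c\,\dist(x,S)$. The inequalities run the wrong way: since $S\subset A\cap B\cap\Lambda\subset\Fix T_{AP}\cap\Lambda$, hypothesis (c) only gives $\dist(x,S)=\dist(x,A\cap B\cap\Lambda)\ge\dist(x,\Fix T_{AP}\cap\Lambda)$ on $U\cap\Lambda$, so a contraction estimate for the distance to the possibly strictly larger set $\Fix T_{AP}\cap\Lambda$ provides no upper bound on $\dist(x^+,S)$. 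For nonconvex sets, $\Fix T_{AP}$ can contain points outside $A\cap B$ arbitrarily close to $S$, and nothing in your argument rules this out (that it cannot happen under the full hypotheses is a consequence of the conclusion, not something you may assume beforehand). There is a second mismatch in the same step: Theorem \ref{t:metric subreg convergence} requires $T$ to be almost averaged at \emph{every} point of $\Fix T\cap\Lambda$ in the tube, because its proof projects $x$ onto $\Fix T\cap\Lambda$, whereas the elemental subregularity hypotheses, via Theorem \ref{t:subreg proj-ref}, give you the averaging property only at reference points in $S$.

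Both defects disappear if, instead of invoking Theorem \ref{t:metric subreg convergence}, you rerun its short proof with $S$ as the reference set; this is where (c) actually enters. For $x\in U\cap\Lambda$ and $x^+\in T_{AP}x$, hypothesis (c), the characterization \eqref{eq:locallinear+} of assumption (a), and condition \eqref{constraint} give $\dist(x,S)\le\dist(x,A\cap B\cap\Lambda)\le\kappa\max\paren{\dist(x,A),\dist(x,B)}\le\kappa\sigma\,\dist\paren{0,\Phi(x)}\le\kappa\sigma\norm{x^+-x}$; feeding this into \eqref{e:average char iii} taken at $\xbar\in P_Sx$ (so $\xbar^+=\xbar$) with $\alpha=2/3$ and composite violation $\varepsilontilde_A+\varepsilontilde_B+\varepsilontilde_A\varepsilontilde_B$ gives $\dist^2(x^+,S)\le\norm{x^+-\xbar}^2\le\paren{1+\varepsilontilde_A+\varepsilontilde_B+\varepsilontilde_A\varepsilontilde_B-\tfrac{1}{2(\kappa\sigma)^2}}\dist^2(x,S)$, which is exactly the claimed rate. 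This is essentially the argument of \cite[Theorem 3.2]{LukNguTam17}, which is all the paper itself invokes (together with Proposition \ref{t:msr cp}); the paper offers no self-contained proof. Finally, the worry you flag yourself is genuine: the violations $\varepsilontilde_A,\varepsilontilde_B=4\varepsilon(1+\varepsilon)/(1-\varepsilon)^2$ are not obtained by simply composing the $2\varepsilon+2\varepsilon^2$ violations of Theorem \ref{t:subreg proj-ref}\ref{t:subreg proj-ref2} at points of $S$; establishing the averaging property of $T_{AP}$ along the orbit with precisely these constants is the technical core of \cite[Theorem 3.2]{LukNguTam17} and remains unproved in your sketch.
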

\begin{proof}
In light of Proposition \ref{t:msr cp}
and the definition of linear monotonicity,
 Proposition \ref{t:cp ncvx} is a specialization of \cite[Theorem 3.2]{LukNguTam17} to the case of two sets
with nonempty intersection.
\end{proof}

If $S=A\cap B\cap \Lambda$ in Proposition \ref{t:cp ncvx}, then assumption \ref{t:cp ncvx iv} can obviously be omitted.

The next theorem shows that the converse to Proposition \ref{t:cp ncvx} holds more generally without any
assumption on the elemental
regularity of the individual sets.
The proof of the next theorem uses the idea in the proof of \cite[Theorem 6.2]{DruIofLew15}.

\begin{thm}[subtransversality is necessary for linear monotonicity of subsequences]\label{NonCon_Nec1+}
Let $\Lambda$, $A$, and $B$ be closed subsets of $\Ebb$,
let  $\xbar\in S\subset A\cap B\cap\Lambda$,
and let $1\le n\in \mathbb{N}$ and $c\in [0,1)$ be fixed.
Suppose that for any sequence of
alternating projections $(x_k)_{k\in \Nbb}$
starting
in $\Lambda$ and
sufficiently close to $\xbar$, there exists a subsequence of the form $(x_{j+nk})_{k\in \Nbb}$ for
some $j\in \{0,1,\ldots,n-1\}$ that remains in $\Lambda$ and is linearly monotone with respect to $S$
with constant $c$.
Then the collection of sets $\{A,B\}$ is subtransversal at $\xbar$ relative to $\Lambda$
with constant $\srr\le \frac{2(2n^2-1-c(n-1))}{1-c}$.
\end{thm}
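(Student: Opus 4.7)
My plan is to verify the metric characterization of subtransversality in Proposition \ref{t:MAINid}: produce $\delta>0$ such that for every $x\in A\cap \B_\delta(\xbar)\cap\Lambda$,
\[
\dist(x, A\cap B\cap\Lambda) \le \tfrac{2(2n^2-1-c(n-1))}{1-c}\,\dist(x, B).
\]
Shrink $\delta$ so the theorem's hypothesis applies to each such $x$. Fix $x$, set $x_0:=x$, and run alternating projections to obtain $(x_k)_{k\in\Nbb}$ with $b_k\in P_B(x_k)$, $x_{k+1}\in P_A(b_k)$; abbreviate $t_k:=\|b_k-x_k\|=\dist(x_k,B)$. The hypothesis then furnishes some $j\in\{0,\ldots,n-1\}$ with $\dist(x_{j+n(k+1)},S)\le c\,\dist(x_{j+nk},S)$ for every $k$.

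First I would record the standard AP facts: since $x_k\in A$ and $x_{k+1}\in P_A(b_k)$, $\|x_{k+1}-b_k\|\le t_k$, so $(t_k)$ is non-increasing and $\|x_{k+1}-x_k\|\le 2t_k\le 2t_0$. The crucial step is to bound $\dist(x_j,S)$. Choosing a (near-)projection $s\in S$ of $x_{j+n}$ and using only the first step of the linear monotonicity, the triangle inequality gives
\[
\dist(x_j,S)\le \|x_j-x_{j+n}\|+\dist(x_{j+n},S)\le \|x_j-x_{j+n}\|+c\,\dist(x_j,S),
\]
which rearranges to $\dist(x_j,S)\le \|x_j-x_{j+n}\|/(1-c)\le 2n\,t_j/(1-c)\le 2n\,t_0/(1-c)$. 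I would then apply the same triangle trick through a near-projection of $x_j$ to get
\[
\dist(x_0,S)\le \|x_0-x_j\|+\dist(x_j,S)\le 2j\,t_0+\tfrac{2n}{1-c}\,t_0.
\]

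Finally, since $S\subset A\cap B\cap\Lambda$, one has $\dist(x_0,A\cap B\cap\Lambda)\le \dist(x_0,S)$, and the worst case $j=n-1$ yields
\[
\dist(x_0,A\cap B\cap\Lambda)\le \tfrac{2(2n-1-c(n-1))}{1-c}\,t_0.
\]
Since $2n\le 2n^2$ for $n\ge 1$, this is stronger than the claimed bound, so Proposition \ref{t:MAINid} delivers subtransversality with $\srr$ controlled by the stated constant.

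The main obstacle is the estimate for $\dist(x_j,S)$. A tempting approach is to bound it through the limit $x_\infty=\lim_k x_{j+nk}\in A\cap B\cap\Lambda$ of the subsequence, but this route produces an unwanted $(1-c)^{-2}$ factor, or, after back-substitution, a self-referential inequality that fails to contract whenever $2n/(1-c)\ge 1$ (which is always the case). The key trick is to invoke the linear-monotonicity hypothesis at only its first step, converting the geometric-rate assumption into a one-shot bound for $\dist(x_j,S)$ in terms of the non-increasing gap $t_j$; from there, two more applications of the triangle inequality transfer the estimate back to $x_0$.
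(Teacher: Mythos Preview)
Your proposal is correct and follows essentially the same approach as the paper's proof: use one step of the linear-monotonicity hypothesis to bound $\dist(x_j,S)$ via $\|x_j-x_{j+n}\|$, then transfer back to $x_0$ by the triangle inequality and the AP estimate $\|x_{k+1}-x_k\|\le 2\dist(x_k,B)$. Your version is in fact slightly sharper: the paper bounds $\dist(x_j,B)$ in terms of $\dist(x_0,B)$ via the triangle inequality, obtaining $\dist(x_j,B)\le(2n-1)\dist(x_0,B)$, whereas you use the elementary monotonicity $t_j\le t_0$ directly, which is why you end up with $\frac{2(2n-1-c(n-1))}{1-c}$ instead of the paper's $\frac{2(2n^2-1-c(n-1))}{1-c}$.
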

\begin{proof}
Let $\rho>0$ be so small that any alternating projections sequence $(x_k)_{k\in \Nbb}$ starting in
$\Ball_{\rho}(\xbar)\cap \Lambda$ has a subsequence $(x_{j+nk})_{k\in \Nbb}$ which is linearly
monotone with respect to $S$ with constant
 $c$.
Take any $x_0\in A\cap \Ball_{\rho}(\xbar)\cap \Lambda$.
Let us consider any alternating projections sequence $(x_k)_{k\in\Nbb}$ starting at $x_0$ and such a
subsequence $(x_{j+nk})_{k\in\Nbb}$.
On one hand,
\begin{equation}\label{n dist>dist}
  \begin{aligned}
2n \dist(x_j,B) &\ge \|x_j-x_{j+n}\|
\ge \dist(x_j,S)-\dist(x_{j+n},S)
\\
&\ge (1-c)\dist(x_j,S) \ge (1-c)\dist(x_j,A\cap B\cap \Lambda)
\\
&\ge (1-c)\left(\dist(x_0,A\cap B\cap \Lambda)-\|x_0-x_j\|\right).
\end{aligned}
\end{equation}
On the other hand,
\begin{equation}\label{dist(x_0,B)}
  \begin{aligned}
\dist(x_0,B) &\ge \dist(x_j,B) - \|x_0-x_j\|
\\
&\ge \dist(x_j,B) - 2j\dist(x_0,B)
\\
&\ge \dist(x_j,B) - 2(n-1)\dist(x_0,B).
\end{aligned}
\end{equation}
A combination of \eqref{n dist>dist} and \eqref{dist(x_0,B)}
yields
\begin{align*}
(2n-1)\dist(x_0,B) &\ge \frac{1-c}{2n}\left(\dist(x_0,A\cap B\cap \Lambda)-\|x_0-x_j\|\right)\\
&\ge \frac{1-c}{2n}\dist(x_0,A\cap B\cap \Lambda)- \frac{(1-c)(n-1)}{n}\dist(x_0,B).
\end{align*}
Hence
\begin{align*}
\dist(x_0,A\cap B\cap \Lambda)
\le \frac{2(2n^2-1-c(n-1))}{1-c}\dist(x_0,B)\quad \forall x_0\in A\cap \Ball_{\rho}(\xbar)\cap \Lambda.
\end{align*}
 This yields subtransversality of $\{A,B\}$ at $\bx$ relative to $\Lambda$ and $\srr \le \frac{2(2n^2-1-c(n-1))}{1-c}$ as claimed.
\end{proof}

The next statement is an immediate consequence of Proposition \ref{t:cp ncvx} and Theorem \ref{NonCon_Nec1+}.
\begin{corollary}[subtransversality is necessary and sufficient for linear monotonicity]\label{Nec_Suf_Sub}
Let $\Lambda\subset\Ebb$ be an affine subspace and let $A$ and $B$ be closed  subsets of $\Ebb$ that are
elementally subregular relative to $S\subset A\cap B\cap \Lambda$ at $\xbar\in S$
with constant $\varepsilon$ and neighborhood $\Ball_{\delta}(\xbar)\cap \Lambda$
 for all
$(a,v)\in \gph{\pncone{A}}$ with
 $a\in \Ball_{\delta}(\xbar)\cap \Lambda$.

Suppose that every sequence of
alternating projections with the starting point sufficiently close
 to $\xbar$ is contained in $\Lambda$.
All such sequences of
alternating projections
are linearly monotone with respect to $S$ with constant $c\in [0,1)$
if and only if the collection of sets is subtransversal at $\xbar$ relative to $\Lambda$ (with an adequate balance of quantitative constants).
\end{corollary}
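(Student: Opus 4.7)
The plan is to prove the two implications separately, invoking the two main results of the section and verifying that their hypotheses line up.

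For the ``if'' direction (subtransversality implies linear monotonicity), I would appeal to Proposition \ref{t:cp ncvx}. The assumed elemental subregularity of $A$ and $B$ at every point $\xhat \in S$ relative to $\Lambda$ with a common constant $\varepsilon$ on a common neighborhood $\Ball_\delta(\xbar)\cap\Lambda$ supplies assumption on regularity there, with $\varepsilon_A = \varepsilon_B = \varepsilon$. Subtransversality of $\{A,B\}$ at $\xbar$ relative to $\Lambda$ furnishes hypothesis \ref{t:cp ncvx ii}. The hypothesis that every alternating projections sequence starting sufficiently close to $\xbar$ stays in $\Lambda$ yields the projector invariance \eqref{e:Uj} on a small enough neighborhood $U = \Ball_\delta(\xbar)\cap\Lambda$. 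Condition \ref{t:cp ncvx iv} may be handled by taking $S$ close enough to $A\cap B\cap \Lambda$ near $\xbar$; in the typical application $S = A\cap B\cap\Lambda$ locally, making this assumption trivial. The constraint qualification \eqref{constraint} (constant $\sigma$) together with the smallness requirement $\widetilde{\varepsilon}_A+\widetilde{\varepsilon}_B+\widetilde{\varepsilon}_A\widetilde{\varepsilon}_B<\tfrac{1}{2(\kappa\sigma)^2}$ is exactly what is meant by the parenthetical ``adequate balance of quantitative constants'' in the corollary statement. Proposition \ref{t:cp ncvx} then delivers linear monotonicity with an explicit constant $c<1$.

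For the ``only if'' direction (linear monotonicity implies subtransversality), I would invoke Theorem \ref{NonCon_Nec1+}. If every alternating projections sequence $(x_k)_{k\in\Nbb}$ starting sufficiently close to $\xbar$ is contained in $\Lambda$ and is linearly monotone with respect to $S$ with constant $c\in[0,1)$, then the full sequence qualifies trivially as the subsequence required by Theorem \ref{NonCon_Nec1+} (take $n=1$, $j=0$). Its conclusion then yields subtransversality of $\{A,B\}$ at $\xbar$ relative to $\Lambda$ with $\srr \le \tfrac{2(2\cdot 1^2 - 1)}{1-c} = \tfrac{2}{1-c}$.

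The main obstacle is not really an analytical hurdle — both implications follow from already-established results — but a bookkeeping one: making sure the quantitative constants from Proposition \ref{t:cp ncvx} (namely $\kappa$, $\sigma$, $\varepsilon$, $\delta$) can indeed be chosen to satisfy the smallness condition $\widetilde{\varepsilon}_A+\widetilde{\varepsilon}_B+\widetilde{\varepsilon}_A\widetilde{\varepsilon}_B<\tfrac{1}{2(\kappa\sigma)^2}$, which is precisely what the parenthetical in the statement flags. One must also verify that elemental subregularity with constant $\varepsilon$ (which may be required to be small) is compatible with the $\{A,B\}$ collection being subtransversal at $\xbar$ relative to $\Lambda$; since these are independent hypotheses on the sets and their geometric configuration, this is a matter of invoking both independently rather than deriving one from the other. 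With these matched, the equivalence follows immediately as asserted.
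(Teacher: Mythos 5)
Your proof is correct and takes essentially the same route as the paper, which disposes of the corollary in one line as ``an immediate consequence of Proposition \ref{t:cp ncvx} and Theorem \ref{NonCon_Nec1+}'' --- exactly your two implications, with the necessity direction being Theorem \ref{NonCon_Nec1+} applied with $n=1$, $j=0$. Your extra bookkeeping about matching $\kappa$, $\sigma$, $\varepsilon$, $\delta$ and the smallness condition is precisely what the paper's parenthetical ``adequate balance of quantitative constants'' is meant to absorb.
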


The next technical lemma allows us formally avoid the restriction ``monotone'' in Theorem \ref{NonCon_Nec1+}.

\begin{lemma}\label{Tec_Lem3}
Let $(x_k)_{k\in\Nbb}$ be a sequence generated by $T_{AP}$
that converges R-linearly to $\xbar\in A\cap B$ with rate $c\in [0,1)$.
Then there exists a subsequence $(x_{k_n})_{n\in\Nbb}$ that is linearly monotone with respect to any
 set $S\subset A\cap B$ with $\xbar\in S$.
\end{lemma}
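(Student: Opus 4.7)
The plan is to exploit the fact that $\xbar\in S$ forces the one-sided bound $d_k:=\dist(x_k,S)\le \|x_k-\xbar\|\le \gamma c^k$, where the second inequality is the R-linear bound \eqref{LinCon_Seq}. This reduces the statement to a greedy selection on the scalar sequence $(d_k)_{k\in\Nbb}$, which tends to zero but may vanish at individual indices.

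Having fixed $S\subset A\cap B$ with $\xbar\in S$ and set $d_k:=\dist(x_k,S)$, I would split into cases on the zero set $Z:=\{k\in\Nbb\mid d_k=0\}$. If $Z$ is infinite, simply enumerate it as $k_0<k_1<\cdots$; then $d_{k_n}=0$ for every $n$, so the subsequence $(x_{k_n})_{n\in\Nbb}$ is trivially linearly monotone with respect to $S$ with any constant in $[0,1]$. If $Z$ is finite, pick $k_0\in\Nbb$ larger than every element of $Z$, so $d_{k_0}>0$; then, inductively, given $k_n$ with $d_{k_n}>0$, I can choose $k_{n+1}>k_n$ so large that $\gamma c^{k_{n+1}}\le c\,d_{k_n}$, which together with the one-sided bound forces $d_{k_{n+1}}\le c\,d_{k_n}$. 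Since $k_{n+1}>\max Z$, also $d_{k_{n+1}}>0$, so the induction proceeds and yields a subsequence linearly monotone with respect to $S$ with constant $c$.

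The only mild obstacle is the case split itself: the greedy inequality $d_{k_{n+1}}\le c\,d_{k_n}$ becomes vacuous once $d_{k_n}=0$, so one must either commit to the zero-distance branch (taking all indices from $Z$) or, if $Z$ is too small to furnish a subsequence, start past $\max Z$ and remain in the positive-distance regime throughout. The dichotomy on $|Z|$ handles both situations cleanly, and the rest of the argument is a direct application of the R-linear upper bound on $d_k$ supplied by $\xbar\in S$.
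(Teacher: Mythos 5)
Your proposal is correct and takes essentially the same route as the paper: both arguments rest on the chain $\dist(x_k,S)\le\|x_k-\xbar\|\le\gamma c^k$ and then greedily pick the next index $k_{n+1}$ large enough that $\gamma c^{k_{n+1}}\le c\,\dist(x_{k_n},S)$, yielding $\dist(x_{k_{n+1}},S)\le c\,\dist(x_{k_n},S)$. Your explicit dichotomy on the zero set of $\dist(x_k,S)$ just formalizes a degenerate case that the paper's proof (which implicitly assumes the current distance stays positive) passes over silently.
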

\begin{proof}
By definition of R-linear convergence, there is $\gamma<+\infty$ such that $\|x_k-\xbar\|\le \gamma c^k$ for all $k\in \Nbb$.
Let $S$ be any set such that $\xbar\in S\subset A\cap B$.
If $x_{k_0}:=x_0\notin S$, i.e., $\dist(x_{k_0},S)>0$, then there exists an iterate of $(x_k)_{k\in\Nbb}$
(we choose the first one) relabeled $x_{k_1}$ such that
\begin{align}\label{k_1}
\dist(x_{k_1},S) \le \|x_{k_1}-\xbar\| \le \gamma c^{k_1} \le c \dist(x_{k_0},S).
\end{align}
Repeating this argument for $x_{k_1}$ in place of $x_{k_0}$ and so on, we extract a subsequence $(x_{k_n})_{n\in \Nbb}$ satisfying
\[
\dist(x_{k_{n+1}},S) \le c \dist(x_{k_n},S)\quad \forall n\in\Nbb.
\]
The proof is complete.
\end{proof}

The above observation allows us to obtain the statement about necessary conditions for linear convergence of the alternating projections
algorithm which extends Theorem \ref{NonCon_Nec1+}.
Here, the index number
 $k_1$ depending on the sequence $(x_k)_{k\in\Nbb}$ will come into play in determining the constant of linear regularity.

\begin{thm}[subtransversality is necessary for linear convergence]\label{NonCon_Nec2}
Let $m\in \mathbb{N}$ be fixed and $c\in [0,1)$.
Let $\Lambda$, $A$ and $B$ be closed subsets of $\Ebb$
  and let $\xbar\in S\subset A\cap B\cap \Lambda$.
Suppose that any alternating projections sequence $(x_k)_{k\in\Nbb}$ starting in $A\cap \Lambda$ and sufficiently
close to $\xbar$ is contained in $\Lambda$, converges R-linearly to a point
  in $S$ with rate $c$, and the index $k_1\le m$ where $k_1$
  satisfies \eqref{k_1}.
Then the collection of sets $\{A,B\}$ is subtransversal at $\xbar$ relative to $\Lambda$ with constant
$\srr\le \frac{2m}{1-c}$.
\end{thm}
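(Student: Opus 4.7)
The plan is to combine Lemma \ref{Tec_Lem3} with the telescoping/triangle-inequality bookkeeping from the proof of Theorem \ref{NonCon_Nec1+}, and then to invoke the one-sided characterization of subtransversality in Proposition \ref{t:MAINid}. Choose $\rho>0$ so small that every AP sequence $(x_k)_{k\in\Nbb}$ started in $A\cap \B_\rho(\xbar)\cap\Lambda$ both remains in $\Lambda$ and satisfies all three assumptions of the theorem. Fix any such $x_0$. Applying Lemma \ref{Tec_Lem3} to this sequence -- with the set $S$ of the statement, which contains the R-linear limit by hypothesis -- produces an index $k_1$ satisfying
\[\dist(x_{k_1},S)\le c\,\dist(x_0,S),\]
and by hypothesis $k_1\le m$.

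Next I would bound $\|x_0-x_{k_1}\|$ from above and from below. For the upper bound, use the standard one-step AP estimate $\|x_i-x_{i+1}\|\le 2\dist(x_i,B)$ together with the fact that $\dist(x_i,B)$ is non-increasing in $i$ -- both are elementary consequences of $x_{i+1}\in P_A P_B x_i$ with $x_i\in A$. Telescoping then gives
\[\|x_0-x_{k_1}\|\le 2k_1\dist(x_0,B)\le 2m\dist(x_0,B).\]
For the lower bound, combine the reverse triangle inequality against $S$, the contraction displayed above, and the inclusion $S\subset A\cap B\cap\Lambda$ to obtain
\[\|x_0-x_{k_1}\|\ge \dist(x_0,S)-\dist(x_{k_1},S)\ge (1-c)\dist(x_0,A\cap B\cap\Lambda).\]
Chaining the two estimates yields
\[\dist(x_0,A\cap B\cap\Lambda)\le \frac{2m}{1-c}\,\dist(x_0,B)\qquad \forall\, x_0\in A\cap \B_\rho(\xbar)\cap\Lambda,\]
which is precisely the one-sided condition \eqref{e:P3-1} with constant $\tfrac{2m}{1-c}$. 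Invoking Proposition \ref{t:MAINid} then delivers subtransversality of $\{A,B\}$ at $\xbar$ relative to $\Lambda$ with $\srr\le \tfrac{2m}{1-c}$, as claimed.

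The crux of the argument -- and the point where the hypothesis is genuinely used -- is the uniform bound $k_1\le m$. Lemma \ref{Tec_Lem3} on its own only delivers an index $k_1$ whose size depends on the R-linear constant $\gamma$ of the particular sequence and on $\dist(x_0,S)$, so without the uniform bound the argument would produce a constant depending on $x_0$ rather than a single regularity constant on the ball. Once $k_1\le m$ is granted, the rest is clean telescoping plus the reverse triangle inequality. Compared with the proof of Theorem \ref{NonCon_Nec1+}, there is no need to split into subcases over an offset $j\in\{0,\dots,n-1\}$, because the contracting subsequence selected by Lemma \ref{Tec_Lem3} starts at $k_0=0$, and the single uniformly bounded step from $x_0$ to $x_{k_1}$ is already enough to extract the regularity estimate.
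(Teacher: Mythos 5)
Your proposal is correct and follows essentially the same route as the paper: select the index $k_1\le m$ via Lemma \ref{Tec_Lem3}, bound $\|x_0-x_{k_1}\|$ below by $(1-c)\dist(x_0,S)\ge(1-c)\dist(x_0,A\cap B\cap\Lambda)$ and above by $2m\dist(x_0,B)$ through the one-step projection estimates, and conclude via the one-sided characterization of subtransversality. The paper compresses the telescoping upper bound into the phrase ``by the definition of the projection and $k_1\le m$,'' but the content is identical to your argument.
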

\begin{proof}
Let $\rho>0$ be so small that
 any alternating projections sequence starting in $A\cap \Ball_{\rho}(\xbar)\cap \Lambda$ converges 
 R-linearly to a point in $S$ with rate $c$.
Take any $x_0\in A\cap \Ball_{\rho}(\xbar)\cap \Lambda$ and generate an alternating projections sequence $(x_k)_{k\in\Nbb}$.
By Lemma \ref{Tec_Lem3}, there is a subsequence $(x_{k_n})_{n\in\Nbb}$ linearly monotone with respect to $S$ at rate $c$.
Then
\begin{align*}
\|x_{k_1}-x_{k_0}\| \ge \dist(x_{k_0},S)-\dist(x_{k_1},S) \ge (1-c)\dist(x_{k_0},S).
\end{align*}
Let $b_0\in P_Bx_0$ be such that $x_1\in P_Ab_0$ and note that $x_{k_0}=x_0$.
By the definition of the projection and $k_1\le m$ it follows that 
\begin{align*}
2m \dist(x_{0},B) = 2m \|b_0-x_0\| \ge \|x_{k_1}-x_{0}\| \ge (1-c)\dist(x_{0},S)\ge (1-c)\dist(x_0,A\cap B\cap \Lambda).
\end{align*}
Hence
\begin{align*}
\dist(x_0,A\cap B) \le \frac{2m}{1-c}\dist(x_0,B)\quad \forall
x_0\in A\cap \Ball_{\rho}(\xbar)\cap \Lambda.
\end{align*}
This yields subtransversality of $\{A,B\}$ at $\bx$ relative to $\Lambda$ and $\srr\le \frac{2m}{1-c}$ as claimed.
\end{proof}

The joining alternating projections sequence $(z_k)_{k\in\Nbb}$ given by \eqref{z_k} often plays a role as an
intermediate step in the analysis of alternating projections.
As we shall see, property of linear extendability itself can also be of interest when dealing
with the alternating projections algorithm, especially for nonconvex setting.
This observation can be seen for example in \cite{LewisMalick08,LewLukMal09,BauLukePhanWang13a,
NolRon16,DruIofLew15}.

\begin{thm}[subtransversality is necessary for linear extendability of subsequences]\label{NonCon_Nec1+_part2}
Let $\Lambda$,  $A$, and $B$ be closed subsets of $\Ebb$, let
 $\xbar\in A\cap B\cap\Lambda$, and let $1\le n\in \mathbb{N}$ and $c\in [0,1)$
be fixed.
Suppose that every alternating projections sequence $(x_k)_{k\in\Nbb}$ starting in $A\cap \Lambda$ and
sufficiently close to $\xbar$ has a subsequence of the form $(x_{j+nk})_{k\in\Nbb}$ for
some $j\in \{0,1,\ldots,n-1\}$ such that the joining sequence $(z_k)_{k\in\Nbb}$ given by \eqref{z_k}
 is a linear extension of $(x_{j+nk})$ on $\Lambda$
with frequency $2n$ and rate $c$.
Then the collection of sets $\{A,B\}$ is subtransversal at $\xbar$
relative to $\Lambda$ with constant $\srr\le \frac{2(2n-1-c(n-1))}{1-c}$.
\end{thm}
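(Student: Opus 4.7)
The approach is to adapt the proof of Theorem \ref{NonCon_Nec1+} while exploiting the extra structure supplied by linear extendability. The key observation is that condition \eqref{LinCon_Seq+'_m}, the monotonicity $\|z_{k+2}-z_{k+1}\|\le\|z_{k+1}-z_k\|$, is actually automatic for AP: since $x_{i+1}\in P_A b_i$ and $b_{i+1}\in P_B x_{i+1}$, one has $\|x_{i+1}-b_i\|=\dist(b_i,A)\le\|x_i-b_i\|$ and $\|b_{i+1}-x_{i+1}\|=\dist(x_{i+1},B)\le\|b_i-x_{i+1}\|$, so the joining-sequence consecutive distances decrease monotonically. In particular $\dist(x_i,B)\le\dist(x_0,B)$ for every $i\in\Nbb$, a bound strictly stronger than the estimate $\dist(x_j,B)\le(2n-1)\dist(x_0,B)$ used in Theorem \ref{NonCon_Nec1+}; this is precisely what will shave the constant from $2n^2-1-c(n-1)$ down to $2n-1-c(n-1)$.

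Concretely, I would fix $\rho>0$ small enough that the hypothesis applies to every AP sequence starting in $A\cap\Ball_\rho(\xbar)\cap\Lambda$, pick such an $x_0$ together with a subsequence $(x_{j+nk})_{k\in\Nbb}$ as promised. After the index shift $\hat z_k:=z_{k+2j}$, one has $\hat z_{2nk}=x_{j+nk}$ and $\hat z_{2nk+1}=b_{j+nk}$, so condition \eqref{LinCon_Seq+'_m2} with $m=2n$ becomes
\[
\dist(x_{j+n(k+1)},B)\le c\,\dist(x_{j+nk},B)\quad \forall k\in\Nbb.
\]
Proposition \ref{t:Com_LinCon_point} then gives R-linear convergence of $(x_{j+nk})$ to some $\xtilde\in\Lambda$; closedness of $A$ and $\Lambda$ yields $\xtilde\in A\cap\Lambda$, and the above Q-linear decay forces $\dist(x_{j+nk},B)\to 0$, so closedness of $B$ gives $\xtilde\in B$. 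Hence $\xtilde\in A\cap B\cap\Lambda$.

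The rest is a telescoping split $\dist(x_0,A\cap B\cap\Lambda)\le\|x_0-\xtilde\|\le\|x_0-x_j\|+\|x_j-\xtilde\|$. The monotonicity of the joining-sequence distances gives $\|x_0-x_j\|\le 2j\,\dist(x_0,B)\le 2(n-1)\,\dist(x_0,B)$, while telescoping along the subsequence and applying the Q-linear rate delivers
\[
\|x_j-\xtilde\|\le\sum_{k=0}^{\infty}2n\,\dist(x_{j+nk},B)\le\frac{2n}{1-c}\,\dist(x_j,B)\le\frac{2n}{1-c}\,\dist(x_0,B).
\]
Adding the two bounds yields
\[
\dist(x_0,A\cap B\cap\Lambda)\le\Bigl(2(n-1)+\frac{2n}{1-c}\Bigr)\dist(x_0,B)=\frac{2(2n-1-c(n-1))}{1-c}\dist(x_0,B)
\]
for every $x_0\in A\cap\Ball_\rho(\xbar)\cap\Lambda$, and Proposition \ref{t:MAINid} then delivers subtransversality of $\{A,B\}$ at $\xbar$ relative to $\Lambda$ with $\srr\le\frac{2(2n-1-c(n-1))}{1-c}$. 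The one place demanding some care is the $2j$-shift bookkeeping that translates \eqref{LinCon_Seq+'_m2} into the concrete Q-linear statement about $\dist(x_{j+nk},B)$ and identifies the R-linear limit of Proposition \ref{t:Com_LinCon_point} as a point of $A\cap B$; once those identifications are in place, everything else is elementary telescoping.
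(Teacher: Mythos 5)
Your proposal is correct and follows essentially the same route as the paper's proof: you use the limit $\xtilde$ of the (shifted) joining sequence from Proposition \ref{t:Com_LinCon_point}, bound $\|x_0-x_j\|\le 2(n-1)\dist(x_0,B)$ and $\|x_j-\xtilde\|\le\frac{2n}{1-c}\dist(x_0,B)$ via blockwise telescoping and the automatic monotone decrease of the joining-sequence steps, and invoke Proposition \ref{t:MAINid}, yielding exactly the constant $\frac{2(2n-1-c(n-1))}{1-c}$. The only difference is presentational: you make explicit the $2j$ index shift and the identification $\xtilde\in A\cap B\cap\Lambda$ via closedness, points the paper leaves implicit.
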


\begin{proof}
Let $\rho>0$ be so small that any alternating projections sequence starting in
$A\cap \Ball_{\rho}(\xbar)\cap \Lambda$ has a subsequence of the described form
which admits the joining
sequence as a linear extension on $\Lambda$ with frequency $2n$ and rate $c$.
Take any $x_0\in A\cap \Ball_{\rho}(\xbar)\cap \Lambda$.
Let us consider any alternating projections sequence $(x_k)_{k\in\Nbb}$ starting at $x_0$,
the corresponding joining sequence $(z_k)_{k\in\Nbb}$ and the subsequence $(x_{j+nk})_{k\in \Nbb}$.
Let $\xtilde\in \Lambda$ be the limit of $(z_k)_{k\in\Nbb}$ as verified in
 Proposition \ref{t:Com_LinCon_point}.

On one hand,
\begin{equation}\label{e1:le}
  \begin{aligned}
\dist(x_j,A\cap B\cap \Lambda) &\le \|x_j-\xtilde\| = \|z_{2j}-\xtilde\|
\\
&\le \sum_{i=2j}^{\infty}\|z_i-z_{i+1}\|
\le \frac{2n}{1-c}\|z_{2j}-z_{2j+1}\|
\\
& = \frac{2n}{1-c}\dist(z_{2j},B) = \frac{2n}{1-c}\dist(x_{j},B)\\
& \le \frac{2n}{1-c}\dist(x_0,B),
\end{aligned}
\end{equation}
where the last estimate follows from the nature of alternating projections.

On the other hand,
\begin{equation}\label{e2:le}
  \begin{aligned}
\dist(x_j,A\cap B\cap \Lambda)
&\ge \dist(x_0,A\cap B\cap \Lambda)-\|x_0-x_j\|\\
&\ge \dist(x_0,A\cap B\cap \Lambda)- 2(n-1)\dist(x_0,B),
\end{aligned}
\end{equation}
where the last estimate holds true since
\[
\norm{x_0-x_j} \le 2j\dist(x_0,B) \le 2(n-1)\dist(x_0,B).
\]

A combination of \eqref{e1:le} and \eqref{e2:le} then implies
\begin{align*}
\dist(x_0,A\cap B\cap \Lambda)
\le \frac{2(2n-1-c(n-1))}{1-c}\dist(x_0,B)\quad \forall x_0\in A\cap \Ball_{\rho}(\xbar)\cap \Lambda,
\end{align*}
which yields subtransversality of $\{A,B\}$ at $\bx$ relative to $\Lambda$ and $\srr \le \frac{2(2n-1-c(n-1))}{1-c}$ as claimed.
\end{proof}

In general, subtransversality is not a sufficient condition for
an alternating projections sequence
 to converge to a point in the intersection of the sets.
For example, let us define the function $f:[0,1]\to \R$ by $f(0)=0$ and on each interval of form $(1/2^{n+1},1/2^{n}]$,
\begin{equation*}
f(t)=
\left\{
  \begin{array}{ll}
    -t+1/2^{n+1}, & \mbox{if }\; t\in (1/2^{n+1},3/2^{n+2}],\\
   \;\; t-1/2^n, & \mbox{if }\; t\in (3/2^{n+2},1/2^{n}],
  \end{array}
\right.
\quad (\forall n\in \Nbb)
\end{equation*}
and consider the sets: $A=\gph f$ and $B=\{(t,t/3)\mid t\in [0,1]\}$ and the point $\bx=(0,0)\in A\cap B$ in $\R^2$.
Then it can be verified that the collection of sets $\{A,B\}$ is subtransversal at $\bx$ while the
alternating projections method gets stuck at points $(1/2^n,0) \notin A\cap B$.
\bigskip

To conclude this section, we show that the property of subtransversality of the
collection of sets has been imposed either explicitly or implicitly in all existing linear
convergence criteria for the method of alternating projections that we are aware of.
The next proposition catalogs existing linear convergence criteria for alternating projections
which complement Proposition \ref{t:cp ncvx}.

\begin{proposition}[R-linear convergence of nonconvex alternating projections]\label{t:AP}
Let $A$ and $B$ be closed and $\bx\in A\cap B$. The collection of sets is denoted $\{A,B\}$.
All alternating projections iterations starting sufficiently close to $\bx$
 converge R-linearly to some point in $A\cap B$ if one of the following conditions holds.
 \begin{enumerate}[\rm (i)]
  \item\label{t:LM} \cite[Theorem 4.3]{LewisMalick08} $A$ and $B$ are smooth manifolds 
  around $\xbar$ and $\{A,B\}$ is transversal at $\xbar$.
  \item\label{t:DIL} \cite[Theorem 6.1]{DruIofLew15} $\{A,B\}$ is intrinsically transversal at $\xbar$.
  \item\label{t:LLM} \cite[Theorem 5.16]{LewLukMal09} $A$ is super-regular at $\xbar$ and $\{A,B\}$ is transversal at $\xbar$.
  \item\label{t:BLPW} \cite[Theorem 3.17]{BauLukePhanWang13b} $A$ is $(B,\varepsilon,\delta)$-regular at
$\xbar$ and the $(A,B)$-qualification condition holds at $\xbar$.
  \item\label{t:NollRon} \cite[Theorem 2]{NolRon16}
$A$ is $0$-H\"older regular relative to $B$ at $\xbar$ and $\{A,B\}$ intersects separably at $\xbar$.
 \end{enumerate}
\end{proposition}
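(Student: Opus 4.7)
The plan is to treat Proposition \ref{t:AP} as what it is -- a catalog of known sufficient conditions for R-linear convergence of alternating projections -- so at the bare minimum the proof amounts to the five citations already built into the statement. To make this resonate with the theme of the paper, however, I would additionally verify for each item that the hypothesis either explicitly contains subtransversality of $\{A,B\}$ at $\bx$ or implies it, together with a form of elemental (sub)regularity of the sets strong enough to trigger Proposition \ref{t:cp ncvx}. This way the five disjoint citations become five instances of a single framework.

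Concretely, for (i) and (iii) transversality at $\bx$ implies subtransversality (indeed linear regularity) of the collection at $\bx$; smooth manifolds are uniformly elementally regular at every point of their normal bundle, and super-regularity is precisely a uniform elemental subregularity with arbitrarily small constant, so Proposition \ref{t:cp ncvx} applies with $\varepsilon_A,\varepsilon_B$ small enough to satisfy condition (d) and yield $c<1$. For (ii) intrinsic transversality implies subtransversality by \cite[Theorem 6.2]{DruIofLew15}, but the individual sets need not be elementally subregular, so the cleanest route is simply to cite \cite[Theorem 6.1]{DruIofLew15} for the convergence itself. For (iv) the $(B,\varepsilon,\delta)$-regularity of $A$ is a restricted form of elemental subregularity relative to $B$, and the $(A,B)$-qualification condition at $\bx$ is well known to be equivalent to transversality, so Proposition \ref{t:cp ncvx} again applies. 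For (v) $0$-H\"older regularity of $A$ relative to $B$ at $\bx$ encodes elemental subregularity with arbitrarily small constant, and separable intersection is by construction a subtransversality-type metric inequality.

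The main obstacle is not any single deep step but a bookkeeping and translation problem: the five source papers use five different vocabularies for set regularity and for the compatibility of the pair, and one must build a dictionary between those notions and the elemental (sub)regularity / subtransversality language used here. The delicate points in that dictionary are (iii) and (v), where one must check that the "arbitrarily small $\varepsilon$'' clauses hidden in super-regularity and in $0$-H\"older regularity can be made small enough to satisfy the quantitative balance in condition (d) of Proposition \ref{t:cp ncvx} given the subtransversality constant $\kappa\sigma$ coming from the corresponding transversality-type hypothesis. This translation effort is exactly what feeds into Proposition \ref{Sub_appear} and supplies the real content behind what otherwise reads as a bland list of references.
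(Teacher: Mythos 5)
Your proposal is correct and takes essentially the same approach as the paper: Proposition \ref{t:AP} is a catalog of known results, and the paper supplies no proof beyond the five citations, which is exactly your minimal plan. The additional unification you sketch is more than the paper attempts (it only observes, in Proposition \ref{Sub_appear}, that the estimates in those references yield linear extendability, hence subtransversality via Theorem \ref{NonCon_Nec1+_part2}); if you did pursue it, some dictionary entries would need care --- e.g.\ the $(A,B)$-qualification condition of \cite{BauLukePhanWang13b} is weaker than transversality, and separable intersection in \cite{NolRon16} is an angle-type condition rather than a metric inequality --- but none of this is needed for the statement itself.
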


It can be recognized without much effort that under any item
of Proposition \ref{t:AP}, the sequences generated by alternating projections starting sufficiently close to $\bx$ are actually linearly extendible.

\begin{proposition}[ubiquity of subtransversality in linear convergence criteria]\label{Sub_appear}\
Suppose than one of the conditions \ref{t:LM}--\ref{t:NollRon} of Proposition \ref{t:AP} is satisfied.
Then for any alternating projections sequence $(x_k)_{k\in\Nbb}$
starting
sufficiently close to $\xbar$, the corresponding joining sequence $(z_k)_{k\in\Nbb}$ given by \eqref{z_k}
 is a linear extension of $(x_k)_{k\in\Nbb}$ with frequency $2$ and rate $c\in [0,1)$.
\end{proposition}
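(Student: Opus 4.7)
The plan is to verify the two defining inequalities of linear extendability (with $m=2$) for the joining sequence $(z_k)_{k\in\Nbb}$, and then argue that each of the five criteria in Proposition \ref{t:AP} already yields them, often as an intermediate step of the original convergence proofs.

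First, I would establish inequality \eqref{LinCon_Seq+'_m} as a free consequence of the defining property of alternating projections, independent of any of the regularity hypotheses. Indeed, since $z_{2k}=x_k\in A$, $z_{2k+1}=b_k\in P_B x_k$, and $z_{2k+2}=x_{k+1}\in P_A b_k$, the defining property of the projector gives
\begin{align*}
\|z_{2k+2}-z_{2k+1}\| &= \dist(b_k,A) \;\le\; \|x_k-b_k\| \;=\; \|z_{2k+1}-z_{2k}\|,\\
\|z_{2k+3}-z_{2k+2}\| &= \dist(x_{k+1},B) \;\le\; \|b_k-x_{k+1}\| \;=\; \|z_{2k+2}-z_{2k+1}\|.
\end{align*}
Hence $(\|z_{k+1}-z_k\|)_{k\in\Nbb}$ is monotonically nonincreasing for any alternating projections iteration.

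Next, I would show \eqref{LinCon_Seq+'_m2} with $m=2$, which here reduces to showing the existence of $c\in[0,1)$ such that
\[
\dist(x_{k+1},B)=\|b_{k+1}-x_{k+1}\|\le c\,\|b_k-x_k\|=c\,\dist(x_k,B)\qquad\forall k\in\Nbb.
\]
This is exactly the ``gap contraction'' that drives each of the five results \ref{t:LM}--\ref{t:NollRon}. My approach is to run through the cited proofs and extract this specific estimate: for \ref{t:LM} and \ref{t:LLM} it is obtained from the transversality angle together with Clarke/super-regularity applied to the normal at $b_k$; for \ref{t:DIL} it is the key contraction inequality in Drusvyatskiy--Ioffe--Lewis derived from the intrinsic transversality modulus; for \ref{t:BLPW} it comes from combining $(B,\varepsilon,\delta)$-regularity of $A$ with the $(A,B)$-qualification condition applied to the successive normal directions; and for \ref{t:NollRon} it comes from the separable intersection property combined with $0$-H\"older regularity, which yields precisely an estimate of the form $\dist(x_{k+1},B)\le c\,\dist(x_k,B)$ with $c<1$ in a neighborhood of $\xbar$. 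In each case the contraction factor $c$ agrees (up to the usual quantitative calibration) with the linear rate already stated in the corresponding theorem.

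The main obstacle is that this is a cross-literature verification: each reference uses its own regularity framework, normal-cone inequality, and bookkeeping, so one must translate each argument into the common statement that the sequence $\dist(x_k,B)$ contracts linearly. Once this extraction is done, combining Part 1 and Part 2 with $z_{2k}=x_k$, $z_{2k+1}=b_k$ gives both \eqref{LinCon_Seq+'_m} and \eqref{LinCon_Seq+'_m2} for the joining sequence, which by Definition \ref{Uni_Lin_Con_to_point} means precisely that $(z_k)_{k\in\Nbb}$ is a linear extension of $(x_k)_{k\in\Nbb}$ with frequency $2$ and rate $c\in[0,1)$. In view of Proposition \ref{t:Com_LinCon_point2}, this refinement is also consistent with, and in fact slightly stronger than, the R-linear convergence statements that the five cited results already provide.
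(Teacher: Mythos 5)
Your proposal is correct and takes essentially the same route as the paper: the paper's proof is exactly the observation that each of the cited convergence proofs establishes, as its key intermediate estimate, the linear extendability inequalities for the joining sequence. Your version merely makes explicit what the paper leaves implicit, namely that \eqref{LinCon_Seq+'_m} holds for free for any alternating projections sequence and that only the gap contraction $\dist(x_{k+1},B)\le c\,\dist(x_k,B)$ needs to be extracted from each cited work.
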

\begin{proof}
The statement can
 be observed directly from the key estimates that were used in proving the corresponding
convergence criterion. In fact, all the criteria listed in Proposition \ref{t:AP} essentially were obtained
from the same fundamental estimate which we named \emph{linear extendability} in this paper.
\end{proof}

Taking Theorem
\ref{NonCon_Nec1+_part2} into account we conclude that subtransversality of the collection of sets $\{A,B\}$ at $\xbar$
is a consequence of each item listed in Proposition \ref{t:AP}.
This observation gives some insights about relationships between various regularity notions of collections of sets and has been
formulated partly in \cite[Theorem 6.2]{DruIofLew15} and \cite[Theorem 4]{KruLukNgu16}.
Hence, the subtransversality property lies at the foundation of all
linear convergence criteria for the method of alternating projections for both convex and nonconvex sets appearing
in the literature to this point.

\section{Application: alternating projections with convexity}\label{s:AP_with_Convex}
In the convex setting, statements with sharper convergence rate estimates are possible.  This is
the main goal of the present section.  Note that a convex set is
elementally regular at all points in the set for all normal vectors with constant $\varepsilon=0$
and neighborhood $\Ebb$ \cite[Proposition 4(vii)]{KruLukNgu17}.  We can thus, without loss of
generality, remove the restriction to the subset
$\Lambda$ that is omnipresent in the nonconvex setting.
We also write $P_A x$ and $P_Bx$ for the projections since the
projectors are single-valued.

The next technical lemma is fundamental for the subsequent analysis.

\begin{lemma}[non-decreasing rate]\label{BasLem} Let $A$ and $B$ be two closed convex sets in $\Ebb$.
We have
\begin{equation}\label{BasEst}
\norm{P_BP_AP_Bx - P_AP_Bx}\cdot \norm{P_Bx - x} \ge \norm{P_AP_Bx - P_Bx}^2\quad \forall x\in A.
\end{equation}
\end{lemma}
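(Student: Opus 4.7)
The plan is to expand all three projections, name them, and exploit the convex first-order optimality (variational) characterization of each projection applied at a well-chosen test point, using the hypothesis $x\in A$ in a crucial way.

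Set $y\equiv P_Bx$, $z\equiv P_Ay=P_AP_Bx$, and $w\equiv P_Bz=P_BP_AP_Bx$. The target inequality becomes $\|w-z\|\cdot\|y-x\|\ge\|z-y\|^2$. First, I would exploit that $x\in A$ by testing it against the projection $z=P_Ay$: the first-order condition for projection onto a closed convex set gives $\langle y-z,\,x-z\rangle\le0$, which rearranges (by writing $x-z=(x-y)+(y-z)$) to
\begin{equation*}
\|y-z\|^2\le \langle y-z,\,y-x\rangle.
\end{equation*}
This is the step that truly uses $x\in A$; without it, $x$ would not be an admissible test point.

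Next, I would split the right-hand vector $y-z$ as $(y-w)+(w-z)$, producing two inner products. For the first one, I would use that $w\in B$ and $y=P_Bx$: the variational inequality $\langle x-y,\,w-y\rangle\le0$ is equivalent to $\langle y-w,\,y-x\rangle\le 0$, so that term is nonpositive and can be dropped. For the second one, plain Cauchy--Schwarz gives $\langle w-z,\,y-x\rangle\le\|w-z\|\cdot\|y-x\|$. Chaining these bounds:
\begin{equation*}
\|z-y\|^2=\|y-z\|^2\le\langle y-w,\,y-x\rangle+\langle w-z,\,y-x\rangle\le 0+\|w-z\|\cdot\|y-x\|,
\end{equation*}
which is exactly \eqref{BasEst}.

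There is no real obstacle here; the entire argument is three applications of the obtuse-angle characterization of projection onto a convex set, and the only subtlety is choosing the correct test points. The two nontrivial choices are using $x$ as a point in $A$ against the projector $P_A$ (which is why the assumption $x\in A$ appears), and using $w$ as a point in $B$ against the projector $P_B$. Once those two are in place the decomposition $y-z=(y-w)+(w-z)$ together with Cauchy--Schwarz closes the estimate immediately. Convexity is used only through the monotone/obtuse-angle projection characterization; no strict or uniform convexity is needed.
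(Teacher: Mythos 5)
Your proof is correct and is essentially the paper's own argument: the paper likewise bounds $\norm{P_AP_Bx-P_Bx}^2$ by $\ip{x-P_Bx}{P_AP_Bx-P_Bx}$ via the obtuse-angle property of $P_A$ tested at $x\in A$, inserts $P_BP_AP_Bx$ to split the inner product, discards the nonpositive term using the characterization of $P_Bx$ tested at $P_BP_AP_Bx\in B$, and finishes with Cauchy--Schwarz. The only difference is notational (your $y,z,w$ versus the paper's explicit composite projections), so there is nothing to add.
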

\begin{proof}
Using the basic facts of the projection operators on a closed and convex sets, we obtain
\begin{align*}
\norm{P_AP_Bx - P_Bx}^2 &\le \ip{x - P_Bx}{P_AP_Bx - P_Bx}
\\
&= \ip{x - P_Bx}{P_AP_Bx - P_BP_AP_Bx} + \ip{x - P_Bx}{P_BP_AP_Bx - P_Bx}
\\
&\le \norm{x - P_Bx}\cdot \norm{P_AP_Bx - P_BP_AP_Bx}.
\end{align*}
The last
estimate holds true since the second term on the previous line is non-positive.
\end{proof}

Lemma \ref{BasLem} implies that for any sequence $(x_k)_{k\in\Nbb}$ of alternating projections  for convex sets, the rate
$\frac{\|x_{k+1}-x_k\|}{\|x_{k}-x_{k-1}\|}$ is nondecreasing when $k$ increases.
This allows us to deduce the following fact about the algorithm.

\begin{thm}[lower bound of complexity]\label{By_Pro_The1} Consider the alternating projections algorithm
for two closed convex sets $A$ and $B$ with a nonempty intersection.
Then one of the following statements holds true.
\begin{enumerate}
\item\label{5.2(i)} The alternating projections method finds a
solution
 after one iteration.
\item\label{5.2(ii)} Alternating projections will not reach a solution after any finite number of iterations.
\end{enumerate}
\end{thm}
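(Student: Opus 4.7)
The plan is to exploit Lemma \ref{BasLem} to show that the sequence of consecutive-step ratios of the joining sequence $(z_k)_{k\in\Nbb}$ is non-decreasing, and then argue that the very first zero in that sequence can only occur at the start, which forces a solution to be found already in the first iteration.

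\textbf{Step 1: non-decreasing ratios along the joining sequence.} For $j\ge 2$, set
\[
r_j \equiv \frac{\|z_j - z_{j-1}\|}{\|z_{j-1}-z_{j-2}\|}
\]
whenever the denominator is positive. Applying Lemma \ref{BasLem} with $x = x_{k-1}\in A$ gives
\[
\|b_k - x_k\|\cdot \|b_{k-1}-x_{k-1}\| \ge \|x_k - b_{k-1}\|^2,
\]
which in joining-sequence notation reads $\|z_{2k+1}-z_{2k}\|\cdot\|z_{2k-1}-z_{2k-2}\|\ge \|z_{2k}-z_{2k-1}\|^2$, i.e.\ $r_{2k+1}\ge r_{2k}$. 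The symmetric version of Lemma \ref{BasLem} (with $A$ and $B$ swapped, applied at $b_{k-1}\in B$) similarly yields $r_{2k}\ge r_{2k-1}$. Interleaving these two yields $r_{j+1}\ge r_j$ for every $j\ge 2$.

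\textbf{Step 2: rephrasing ``solution found'' in terms of the $z_k$'s.} Since $x_k\in A$ for all $k\ge 1$, the iterate $x_k$ is a solution precisely when $b_k = x_k$, equivalently $\|z_{2k+1}-z_{2k}\|=0$. More inclusively, if any consecutive $z$'s coincide, i.e.\ $z_{j}=z_{j-1}$ for some $j\ge 1$, then the common point lies in $A\cap B$ (it is in $B$ if $j$ is odd, and in $A$ if $j$ is even, and the other membership follows from the equality). So one can work with the smallest index $k^{*}\ge 1$ for which $\|z_{k^{*}}-z_{k^{*}-1}\|=0$, assuming one exists; otherwise alternative (ii) holds and we are done.

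\textbf{Step 3: the dichotomy.} If $k^{*}=1$, then $b_0=x_0\in A\cap B$ and hence $x_1 = P_A b_0 = x_0$ is a solution already after the first iteration. If instead $k^{*}\ge 2$, then by minimality $\|z_{j}-z_{j-1}\|>0$ for all $j<k^{*}$, so all ratios $r_2,\ldots,r_{k^{*}}$ are well-defined, and $r_{k^{*}}=0$. The non-decreasing property from Step~1 then forces $0\le r_2\le r_3\le\cdots\le r_{k^{*}}=0$, so all these ratios vanish. In particular $r_2=0$ gives $\|z_2-z_1\|=0$, contradicting the minimality of $k^{*}$ unless $k^{*}=2$. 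In the remaining case $k^{*}=2$, we have $x_1=b_0\in A\cap B$, again a solution after one iteration. Either way only alternative (i) occurs, and in all other situations alternative (ii) holds.

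The main obstacle is the careful interleaving in Step 1: Lemma \ref{BasLem} is stated only for $x\in A$, so I need to be confident that applying the analogous $A\leftrightarrow B$ swapped inequality is legitimate (it is, because Lemma \ref{BasLem}'s proof uses only the two projectors on closed convex sets, with no asymmetry between $A$ and $B$) and to verify that both parities of indices are covered, giving monotonicity of the \emph{full} sequence $(r_j)_{j\ge 2}$ rather than just every other term. Once that is in hand, Steps 2--3 are essentially a minimality argument and a short case analysis.
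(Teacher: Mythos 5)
Your proposal is correct and uses essentially the same mechanism as the paper: Lemma \ref{BasLem} read as monotonicity of the consecutive step-length ratios of the joining sequence $(z_k)_{k\in\Nbb}$ (including the $A\leftrightarrow B$-swapped version, which you rightly justify by the symmetry of the lemma's proof and which the paper uses implicitly). The only organizational difference is that you argue via minimality of the first vanishing step, whereas the paper argues by contraposition, fixing the positive initial ratio $\sqrt{c}$ and propagating $\dist(x_k,B)\ge c^{k}\dist(x_0,B)>0$; both come down to the same ratio-monotonicity, and your route even avoids the paper's side appeal to convergence of convex alternating projections.
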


\begin{proof}
If the starting point is actually in $A\cap B$, the proof becomes trivial.
Let us consider the case that $x_0\in A\setminus B$.
Suppose that the alternating projections method does not find a solution
after one iterate, that is, $x_1=P_AP_Bx_0 \notin A\cap B$. In other words,
 we suppose that scenario \eqref{5.2(i)}
does not occur and prove the validity of scenario \eqref{5.2(ii)}.

In this case, it holds that $P_Bx_0 \in B\setminus A$ as $x_1=P_AP_Bx_0 \notin A\cap B$.
As a result, $\|x_1-P_Bx_0\|>0$.
We also claim that $\|x_1-P_Bx_0\|<\|P_Bx_0-x_0\|$.
Indeed, suppose otherwise that $\|x_1-P_Bx_0\| = \|P_Bx_0-x_0\|$ (note that $\|x_1-P_Bx_0\| \le \|P_Bx_0-x_0\|$ by definition of projection). 
Then $\|P_Bx_0-x_0\|= \dist(P_Bx_0,A)$, which implies that $x_0$ is a fixed point of $P_AP_B$, $x_0 = P_AP_Bx_0$.
This contradicts the fact that $x_0\in A\setminus B$ and $A\cap B \neq \emptyset$ since any alternating
projections sequence for convex sets with nonempty intersection will converge to a point in the intersection \cite{BauBorSIREV96}.
Hence, we have checked that
\[
0<\|x_1-P_Bx_0\|<\|P_Bx_0-x_0\|.
\]
Then the following constant is well defined:
\begin{equation}\label{sqrt:c}
  \sqrt{c}:=\frac{\|x_1-P_Bx_0\|}{\|P_Bx_0-x_0\|} \in (0,1).
\end{equation}
Using Lemma \ref{BasLem} we get
\begin{align*}
\frac{\dist(x_1,B)}{\|x_1-P_Bx_0\|} = \frac{\norm{P_Bx_1-x_1}}{\|x_1-P_Bx_0\|} \ge \frac{\|x_1-P_Bx_0\|}{\|P_Bx_0-x_0\|} = \sqrt{c}.
\end{align*}
Hence
\begin{align*}
\dist(x_1,B)\ge \sqrt{c}\|x_1-P_Bx_0\| = c\|P_Bx_0-x_0\| = c\dist(x_0,B) >0.
\end{align*}
Applying Lemma \ref{BasLem} consecutively, we obtain
\begin{align*}
\dist(x_k,B) \ge c^{k}\dist(x_0,B)>0\quad \forall k\in\Nbb.
\end{align*}
This particularly implies that $x_k\notin A\cap B$ for any natural number $k\in \mathbb{N}$, and
the proof is complete.
\end{proof}

\begin{remark}
In contrast to Theorem \ref{By_Pro_The1} for convex sets, there are simple examples of nonconvex sets such that for any
given number $n\in \mathbb{N}$, the alternating projections method will find a solution after exactly $n$ iterates.
For instance, let us consider a geometric sequence $z_k = \left(\frac{1}{3}\right)^kz_0$ where $0\neq z_0\in \Ebb$.
For any number $n\in \mathbb{N}$, one can construct the two finite sets
 by $A:=\{z_{2k}\mid k=0,1,\ldots,n\}$ and $B:=\{z_{2n}\} \cup \{z_{2k+1} \mid k=0,1,\ldots,n-1\}$.
Then the alternating projections method starting at $z_0$ will find the unique solution $z_{2n}$ after exactly $n$ iterates.
\end{remark}

\begin{thm}[necessary and sufficient condition: local version]\label{BasThe_Loc}
Let $A$ and $B$ be closed convex sets and $\xbar\in A\cap B$.
If the collection of sets $\{A, B\}$ is subtransversal at $\xbar$ with constant $\srr < +\infty$, then for any number
$c\in (1-\srr^{-2},1)$, all alternating projections sequences starting sufficiently close to $\xbar$ are linearly
monotone with respect to $A\cap B$ with rate not greater than $c$.

Conversely, if there exists a number $c\in [0,1)$ such that every alternating projections iteration starting sufficiently
close to $\xbar$ converges R-linearly to some point in
$A\cap B$
with rate not greater than $c$, then the collection of sets $\{A, B\}$ is subtransversal at $\xbar$ with
constant $\srr \le \frac{1}{1-c}$.
\end{thm}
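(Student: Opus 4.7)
For the sufficiency direction, my plan is a two-half-step analysis that exploits firm nonexpansivity of the convex projectors together with subtransversality applied twice. Fix $c \in (1-\srr^{-2},1)$ and pick $\delta>0$ so that $1-(\srr+\delta)^{-2}<c$. For $x\in A$ close enough to $\xbar$, set $b:=P_Bx$, $x^+:=P_Ab$, and $\bar x:=P_{A\cap B}(x)$. Firm nonexpansivity of $P_B$ at $\bar x\in B$ gives $\|b-\bar x\|^2+\|x-b\|^2\le\|x-\bar x\|^2$, and the one-sided characterization of subtransversality from Proposition \ref{t:MAINid} yields $\|x-b\|=\dist(x,B)\ge(\srr+\delta)^{-1}\|x-\bar x\|$. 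Combining these,
\[
\dist(b,A\cap B)^2\le\|b-\bar x\|^2\le\left(1-(\srr+\delta)^{-2}\right)\dist(x,A\cap B)^2.
\]
Firm nonexpansivity of $P_B$ also ensures $\|b-\xbar\|\le\|x-\xbar\|$, so $b$ lies in the same neighborhood and the symmetric half-step $b\mapsto x^+$ (applying subtransversality from the $B$-side) contributes the same contraction factor. Composing the two half-steps gives $\dist(x^+,A\cap B)\le(1-(\srr+\delta)^{-2})\dist(x,A\cap B)<c\cdot\dist(x,A\cap B)$, which is the desired linear monotonicity.

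For the necessity direction, I would first pass from R-linear convergence with rate $c$ to linear monotonicity of $\dist(x_k,A\cap B)$ with respect to $A\cap B$ at the same rate $c$; in the convex setting this equivalence is exactly Proposition \ref{u_R}, which rests on Fejér monotonicity of convex alternating projections together with R-linear decay of $\|x_k-\xtilde\|$. The second ingredient is a convex strengthening of the general estimate used in Theorem \ref{NonCon_Nec2}: for $x_0\in A$ and $b_0=P_Bx_0$, firm nonexpansivity of $P_A$ applied to the pair $(x_0,b_0)$ with $P_Ax_0=x_0$ yields $\|x_0-x_1\|^2+\|b_0-x_1\|^2\le\|x_0-b_0\|^2$, hence $\|x_0-x_1\|\le\dist(x_0,B)$. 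Combining the triangle inequality with linear monotonicity,
\[
(1-c)\dist(x_0,A\cap B)\le\dist(x_0,A\cap B)-\dist(x_1,A\cap B)\le\|x_0-x_1\|\le\dist(x_0,B),
\]
and the one-sided characterization in Proposition \ref{t:MAINid} then gives $\srr\le\frac{1}{1-c}$.

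The main obstacle lies in the first step of the necessity direction: deducing linear monotonicity of the distance-to-intersection from mere R-linear convergence of $\|x_k-\xtilde\|$. This implication fails for arbitrary sequences, so the convex structure must be used essentially --- specifically Fejér monotonicity together with the non-decreasing-ratio property of the joining sequence provided by Lemma \ref{BasLem}. Without this equivalence, a direct application of Theorem \ref{NonCon_Nec2} (with $m=1$ via Lemma \ref{Tec_Lem3}) would produce only the weaker bound $\srr\le\frac{2}{1-c}$; the improvement to $\frac{1}{1-c}$ is precisely the dividend of the convex inequality $\|x_1-x_0\|\le\dist(x_0,B)$, which removes the factor $2$ inherent in the nonconvex estimate $\|x_1-x_0\|\le 2\dist(x_0,B)$. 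This same identification of the factor-$2$ improvement also explains why, in the sufficiency direction, composing \emph{two} half-steps (rather than bounding a single full step directly) is what produces the sharp squared rate $1-\srr^{-2}$ instead of $\sqrt{1-\srr^{-2}}$.
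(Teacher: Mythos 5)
Your necessity argument does not go through as written. It hinges on upgrading R-linear convergence with rate $c$ to linear monotonicity with respect to $A\cap B$ at the \emph{same} rate $c$, and you delegate this to Proposition \ref{u_R}. Inside the paper that is circular: Proposition \ref{u_R} is presented as a by-product of the proof of Theorem \ref{BasThe_Loc} itself, so it cannot serve as an ingredient; and even taken at face value it asserts equivalence of the two properties, not preservation of the rate, which is exactly what your chain $(1-c)\dist(x_0,A\cap B)\le \|x_0-x_1\|\le \dist(x_0,B)$ requires (monotonicity at some degraded rate $c'>c$ would only give $\srr\le (1-c')^{-1}$). The paper never establishes per-step contraction of $\dist(x_k,A\cap B)$ at rate $c$. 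Instead it works with the joining sequence $(z_k)$ of \eqref{z_k}: Fej\'er monotonicity plus the R-linear bound give $\|z_{k+1}-z_k\|\le\|z_k-\xtilde\|\le \frac{\gamma}{\sqrt c}\sqrt c^{\,k}$, and then Lemma \ref{BasLem} (the non-decreasing ratio of consecutive step lengths) is used in a proof by contradiction to obtain the key inequality \eqref{Cla1}, $\|z_{k+1}-z_k\|\le\sqrt c\,\|z_k-z_{k-1}\|$ for \emph{all} $k$; telescoping then yields $\dist(x_0,A\cap B)\le\|x_0-\xtilde\|\le\sum_j\|z_{2j+1}-z_{2j}\|\le\frac{1}{1-c}\dist(x_0,B)$, with the degenerate case of termination after one iterate handled separately via Theorem \ref{By_Pro_The1} and Lemma \ref{BasLem}. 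You correctly name Fej\'er monotonicity and Lemma \ref{BasLem} as the convex ingredients, and your observation that the convex bound $\|x_1-x_0\|\le\dist(x_0,B)$ removes the factor $2$ of Theorem \ref{NonCon_Nec2} is accurate, but the actual bridge (claim \eqref{Cla1} and its contradiction argument) is missing from your sketch, and without it the constant $\frac{1}{1-c}$ is not reached.

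For the sufficiency direction the paper simply cites \cite[Corollary 3.13(c)]{HesLuk13}; your direct two-half-step argument is fine for the $A\to B$ half-step, but the step where you apply ``subtransversality from the $B$-side'' with the same constant is unjustified. By Proposition \ref{t:MAINid}, $\srr$ is by definition the exact constant in the one-sided estimate $\dist(x,A\cap B)\le\kappa\,\dist(x,B)$ over $x\in A$; the estimate you need for the half-step $b\mapsto x^+$, namely $\dist(b,A\cap B)\le(\srr+\delta)\dist(b,A)$ for $b\in B$ near $\xbar$, involves the other one-sided constant ${\rm sr}'[B,A](\xbar)$, which the paper controls only through the symmetric constant via \eqref{P3-2}, i.e.\ by $\sr\le 1+2\,\srr$. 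As written, your argument delivers a per-step factor $\sqrt{\paren{1-(\srr+\delta)^{-2}}\paren{1-\kappa_B^{-2}}}$ with $\kappa_B$ the $B$-side constant, which reaches every $c>1-\srr^{-2}$ only if you additionally prove $\kappa_B\le\srr$ (or else retreat to the symmetric constant and the weaker threshold $1-\sr^{-2}$, or cite the result as the paper does). So both halves of your proposal contain a genuine gap at precisely the point where the sharp constant is produced.
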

\begin{proof}
The first implication is an adaption of \cite[Corollary 3.13(c)]{HesLuk13} to the terminology of this paper.

We now prove the converse implication.
Let $\rho>0$ be so small that every alternating projections iteration starting in $B_{\rho}(\xbar)$ 
converges R-linearly to a point in $A\cap B$
 with rate not greater than $c$.
Take any $x_0\in A\cap B_{\rho}(\xbar)$.
Let us consider the alternating projections sequence $(x_k)_{k\in\Nbb}$ starting at $x_0$ and converging
R-linearly to $\xtilde\in A\cap B$ with rate not greater than $c$.
By definition of R-linear convergence, there is a number $\gamma>0$ such that
\begin{align}\label{l.c.}
\|x_k-\xtilde\|\le \gamma c^k\quad \forall k\in \Nbb.
\end{align}
Taking Theorem \ref{By_Pro_The1} into account, we consider the two possible cases as follows.

\emph{Case 1.} The alternating projections method finds a solution after one iterate, $x_1=P_AP_Bx_0\in A\cap B$.
Lemma \ref{BasLem} yields
\[
\norm{x_1 - P_Bx_0}^2 \le \norm{P_Bx_1 - x_1}\cdot \norm{P_Bx_0 - x_0} = 0.
\]
This implies that $P_Bx_0 = x_1\in A\cap B$ and as a result,
\begin{align}\label{Cas1}
\dist(x_0,A\cap B)\le \|x_0-P_Bx_0\| = \dist(x_0,B).
\end{align}

\emph{Case 2.} The alternating projections do not reach a solution after any finite number of iterates.
We will make use of the joining sequence $(z_k)_{k\in\Nbb}$ given by \eqref{z_k}.
Since $P_A$ and $P_B$ are firmly nonexpansive, the sequence $(z_k)_{k\in\Nbb}$ is Fej\'er monotone with respect to $A\cap B$.
Then it follows that
\begin{align}\label{Gap1}
\|z_{k+1}-z_k\| = \max\{\dist(z_k,A),\dist(z_k,B)\} \le 
\norm{z_k-\xtilde}\le \frac{\gamma}{\sqrt{c}} \sqrt{c}^k\quad \forall k\in\Nbb.
\end{align}
We claim that
\begin{equation}\label{Cla1}
\|z_{k+1}-z_{k}\| \le \sqrt{c}\|z_k-z_{k-1}\|\quad \forall k\in \mathbb{N}.
\end{equation}
Suppose to the contrary that there exists a natural number $p\ge 1$ such that $\|z_{p+1}-z_{p}\|> \sqrt{c}\|z_p-z_{p-1}\|$.
Choose a number $\theta>\sqrt{c}$ such that $\|z_{p+1}-z_{p}\|\ge \theta\|z_p-z_{p-1}\|$.
Then applying Lemma \ref{BasLem}, we get
\begin{align*}
\|z_{k+1}-z_{k}\| \ge \theta^{k-p+1}\|z_p-z_{p-1}\|\quad \forall k\ge p,\; k\in\Nbb.
\end{align*}
This together with \eqref{Gap1} implies that for all natural number $k\ge p$,
\begin{align*}
& \frac{\gamma}{\sqrt{c}} \sqrt{c}^k \ge \theta^{k-p+1}\|z_{p}-z_{p-1}\|\\
\Leftrightarrow & \frac{\gamma}{\sqrt{c}} \sqrt{c}^k \ge \sqrt{c}^{k-p+1}\left(\frac{\theta}{\sqrt{c}}\right)^{k-p+1}\|z_{p}-z_{p-1}\|\\
\Leftrightarrow & \frac{\gamma}{\sqrt{c}} \sqrt{c}^{p-1} \ge \left(\frac{\theta}{\sqrt{c}}\right)^{k-p+1}\|z_{p}-z_{p-1}\|.
\end{align*}
Letting $k\to +\infty$, the last inequality leads to a contradiction since $\frac{\theta}{\sqrt{c}}>1$.
Hence, \eqref{Cla1} has been proved.

Now, using \eqref{Cla1} and the firm nonexpansiveness of $P_A$ and $P_B$, we obtain that
\begin{equation}\label{Cas2}
  \begin{aligned}
\dist(x_0,A\cap B) \le & \|x_0-\xtilde\| \le \sum_{j=0}^{\infty}\|z_{2j+2}-z_{2j}\|  \le \sum_{j=0}^{\infty}\|z_{2j+1}-z_{2j}\|\\
\le & \sum_{j=0}^{\infty}\|z_1-z_0\|\sqrt{c}^{2j} \le \frac{1}{1-c}\|z_1-z_0\| = \frac{1}{1-c}\dist(x_0,B).
\end{aligned}
\end{equation}

A combination of \eqref{Cas1} and \eqref{Cas2}, which respectively correspond to the two cases, implies that
\begin{align*}
\dist(x_0,A\cap B) \le \frac{1}{1-c}\dist(x_0,B)\quad \forall x_0\in A\cap B_{\rho}(\xbar).
\end{align*}
Hence $\{A,B\}$ is subtransversal at $\xbar$ and the constant $\srr \le \frac{1}{1-c}$ as claimed.
\end{proof}

The next theorem provides a global version of Theorem \ref{BasThe_Loc}.

\begin{thm}[necessary and sufficient condition: global version]\label{BasThe_Glo}
Let $A$ and $B$ be closed convex sets with nonempty intersection.
If the collection of sets $\{A, B\}$ is subtransversal at every point of (the boundary of) $A\cap B$
with constants bounded from above by $\kappa <+\infty$,
then for any number $c \in (1-\kappa^{-2},1)$,
every alternating projections iteration
 converges R-linearly to a point in
$A\cap B$ with rate not greater than $c$.

Conversely, if there exists a number $c\in [0,1)$ such that
every alternating projections sequence eventually converges R-linearly to a point in $A\cap B$
 with rate not greater than $c$, then the collection of sets $\{A,B\}$ is globally subtransversal
with constant $\kappa \le \frac{1}{1-c}$,
that is,
\begin{equation}\label{global_subtran}
  \dist(x,A\cap B) \le \frac{1}{1-c}\dist(x,B)\quad \forall x\in A.
\end{equation}
\end{thm}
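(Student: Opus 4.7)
\proofsketch
The plan is to reduce both directions of Theorem \ref{BasThe_Glo} to the already-proved local Theorem \ref{BasThe_Loc}, using the Fej\'er monotonicity of alternating projections for convex sets (cf.\ \cite{BauBorSIREV96}) together with Lemma \ref{BasLem} and Theorem \ref{By_Pro_The1} to pass between local and global statements.

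For the forward direction, fix any $c\in(1-\kappa^{-2},1)$ and seed $x_0\in\Ebb$. By classical convex theory the AP sequence $(x_k)_{k\in\Nbb}$ is Fej\'er monotone with respect to $A\cap B$ and converges to some $\xtilde\in A\cap B$. If $\xtilde\in\intr(A\cap B)$, then eventually $x_k\in A\cap B$ and the iteration terminates at a fixed point, so R-linear convergence with any rate is trivial. Otherwise $\xtilde\in\bdy(A\cap B)$ and by hypothesis the subtransversality constant at $\xtilde$ satisfies $\srrTh\le\kappa$, so $c\in(1-\srrTh^{-2},1)$. Theorem \ref{BasThe_Loc} then yields a neighborhood $\Ball_\rho(\xtilde)$ on which every AP iteration is linearly monotone with respect to $A\cap B$ with rate at most $c$. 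Because $x_k\to\xtilde$ there is some index $N$ with $x_N\in\Ball_\rho(\xtilde)$, so
\[
\dist(x_{N+k},A\cap B)\le c^k\dist(x_N,A\cap B)\quad\forall k\in\Nbb,
\]
and Fej\'er monotonicity bounds $\dist(x_N,A\cap B)\le\dist(x_0,A\cap B)$. Combining these with the inequality $\|x_k-\xtilde\|\le\|x_N-\xtilde\|$ for $k\le N$ yields an R-linear rate bound $\|x_k-\xtilde\|\le\gamma c^k$ with $\gamma$ depending on $x_0$ but on rate at most $c$.

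For the converse, I observe that the converse part of the local Theorem \ref{BasThe_Loc} already uses no local structure beyond the R-linear convergence of the AP sequence started at $x_0$. So I would take any $x_0\in A$ (no longer restricted to a neighborhood of $\xbar$), invoke the hypothesis that the resulting AP sequence converges R-linearly to some $\xtilde\in A\cap B$ with rate at most $c$, and then repeat verbatim the two-case analysis used in the proof of Theorem \ref{BasThe_Loc}. Case 1 (one-step termination) uses Lemma \ref{BasLem} to force $P_Bx_0=x_1\in A\cap B$, giving $\dist(x_0,A\cap B)\le\dist(x_0,B)$. Case 2 (Theorem \ref{By_Pro_The1}\ref{5.2(ii)}) combines the R-linear decay of the joining sequence $(z_k)_{k\in\Nbb}$ of \eqref{z_k} with the nondecreasing-rate estimate from Lemma \ref{BasLem} to conclude $\|z_{k+1}-z_k\|\le\sqrt{c}\,\|z_k-z_{k-1}\|$ for every $k\ge1$, and telescoping gives $\dist(x_0,A\cap B)\le \tfrac{1}{1-c}\dist(x_0,B)$. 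Since $x_0\in A$ was arbitrary, this is precisely \eqref{global_subtran}.

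The step I expect to be the main obstacle is the forward direction, specifically making the local rate $c$ work globally without losing uniformity: the local version only guarantees linear monotonicity for iterates sufficiently close to the limit, so some care is needed to promote this to an R-linear bound valid for all iterates (not just the tail), and to ensure that the rate $c\in(1-\kappa^{-2},1)$ can be chosen independently of the particular limit $\xtilde$. The uniform bound $\srrTh\le\kappa$ at all boundary points together with Fej\'er monotonicity (which absorbs the pre-tail iterates into the constant $\gamma$ rather than into the rate) is what makes this possible.
\endproofsketch
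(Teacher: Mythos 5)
Your proposal follows essentially the same route as the paper: both directions are reduced to Theorem \ref{BasThe_Loc}, the forward one by letting the (classically convergent) iterates enter a neighborhood of their limit $\xtilde\in \bd(A\cap B)$ and invoking the local result there (uniformly in $\xtilde$ because the local constants are bounded by $\kappa$, so the single rate $c\in(1-\kappa^{-2},1)$ works at every boundary point), the converse by observing that the two-case argument in the proof of Theorem \ref{BasThe_Loc} uses no local structure and can be rerun for an arbitrary $x_0\in A$. Your reading of the converse is correct; the only bookkeeping you gloss is the word ``eventually'' in the hypothesis, which the paper handles by enlarging the constant (the step \eqref{from_N}--\eqref{to_N}) before repeating the Case 2 estimates, and your treatment of an interior limit via finite termination is a harmless substitute for the paper's reduction to the ``no solution after one iterate'' case.

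In the forward direction your outline is sound, but two steps need repair or an explicit justification. First, Fej\'er monotonicity gives $\|x_k-\xtilde\|\ge\|x_N-\xtilde\|$ for $k\le N$, not $\le$ as written; what you actually need is $\|x_k-\xtilde\|\le\|x_0-\xtilde\|$ together with $c^k\ge c^N$, which absorbs the head of the sequence into $\gamma$ exactly as you intend. Second, linear monotonicity of the tail only controls $\dist(x_{N+k},A\cap B)$; to convert this into $\|x_{N+k}-\xtilde\|\le \gamma'' c^{k}$ you must invoke, e.g., Proposition \ref{t:Com_LinCon} (applicable since $P_AP_B$ is averaged for convex sets) or \cite[Theorem 5.12]{BauschkeCombettes11}, which the paper quotes just before that proposition. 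With these repairs your argument does prove the theorem as stated. There is, however, a genuine difference from the paper worth noting: instead of absorbing the pre-tail iterates into the constant, the paper uses Lemma \ref{BasLem} (nondecreasing step ratios) to propagate the tail contraction \eqref{Cla1} backwards to $k=0$, so the entire joining sequence contracts at rate $\sqrt{c}$ with no inflation of constants; that stronger, constant-preserving conclusion is what underlies Corollaries \ref{cor_reg} and \ref{cor_con}, and your version of the forward direction would not support them.
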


\begin{proof}
We prove the first implication.
Let us take any point $x_0\in \Ebb$ and
consider the alternating projections sequence $(x_k)_{k\in\Nbb}$
starting at $x_0$.
It suffices to consider only the case that the alternating projections method does not find a solution after one iterate.
It is well known that $(x_k)_{k\in\Nbb}$ converges to some point $\xtilde\in \bd(A\cap B)$ \cite{BauBorSIREV96}.
Hence, after a finite number, say $p$, of iterates, the iterate $x_p$ must be sufficiently close to $\xtilde$.
Using the assumption that $\{A,B\}$ is subtransversal at $\xtilde$ with constant $\srrt \le \kappa$ and applying Theorem \ref{BasThe_Loc},
we deduce that the alternating projections sequence starting from $x_p$ converges R-linearly to $\xtilde$ with 
rate not greater than $c$.
On one hand, using \eqref{Cla1} for the alternating projections sequence
starting from $x_p$ and the corresponding joining sequence,
 we get
\begin{align}\label{Cla11}
\|z_{k+1}-z_{k}\|\le \sqrt{c}\|z_k-z_{k-1}\|\quad \forall k\ge p+1, k\in\Nbb.
\end{align}
On the other hand, applying Lemma \ref{BasLem}, we get
\begin{align}\label{Cla12}
\frac{\|z_{k+1}-z_{k}\|}{\|z_k-z_{k-1}\|}\le \frac{\|z_{p+2}-z_{p+1}\|}{\|z_{p+1}-z_{p}\|} \le \sqrt{c}\quad \forall k\le p, k\in\Nbb.
\end{align}
A combination of \eqref{Cla11} and \eqref{Cla12} yields the estimate \eqref{Cla1}.
Proposition \ref{t:Com_LinCon} then ensures that the joining sequence $(z_k)_{k\in\Nbb}$ converges R-linearly to $\xtilde$
 with rate not greater than $\sqrt{c}$.
This implies that the sequence $(x_k)_{k\in\Nbb}$ converges R-linearly to $\xtilde$ with rate not greater than $c$ as claimed.

We now prove the converse implication.
Suppose that every sequence
 of alternating projections eventually converges R-linearly to a point in $A\cap B$ with rate not greater than $c$.
We need to verify \eqref{global_subtran}.
Note that the estimate \eqref{global_subtran} is trivial for $x\in A\cap B$.
Let us take an arbitrary $x\in A\setminus B$ and consider the alternating projections sequence $(x_k)_{k\in\Nbb}$
starting at $x_0=x$. We consider the two possible cases as stated in Theorem \ref{By_Pro_The1}.

\emph{Case 1.} The alternating projections method finds a solution after one iterate. The argument for
\emph{Case 1} of the proof of Theorem \ref{BasThe_Loc} yields \eqref{global_subtran}.

\emph{Case 2.} The alternating projections method does not find a solution after any finite number of iterates.
Since $(x_k)_{k\in\Nbb}$ eventually converges R-linearly to a point $\xtilde\in A\cap B$ with rate not greater than $c$,
there exists a natural number $p\in \mathbb{N}$ and a constant $\gamma'>0$ such that
\begin{equation}\label{from_N}
\|x_k-\xtilde\| \le \gamma' c^{k-p} = \frac{\gamma'}{c^p}c^{k}\quad \forall k\ge p.
\end{equation}
Let us define the number
\begin{equation}\label{to_N}
\gamma :=\max\left\{
\frac{\gamma'}{c^p},
\frac{\|x_k-\xtilde\|}{c^k}:\; k=0,1,\ldots,p
\right\}>0.
\end{equation}
Combining \eqref{from_N} and \eqref{to_N} yields 
\begin{equation*}
\|x_k-\xtilde\| \le \gamma c^{k}\quad \forall k\in \Nbb.
\end{equation*}
The argument for \emph{Case 2} in the proof of Theorem \ref{BasThe_Loc} implies that the sequence
$(z_k)_{k\in\Nbb}$ defined at \eqref{z_k} satisfies
\[
\|z_{k+1}-z_k\| \le \sqrt{c}\|z_{k}-z_{k-1}\|\quad \forall k\in \mathbb{N}.
\]
From this condition, the estimate \eqref{global_subtran} is obtained by using the estimates at
\eqref{Cas2}.

The proof is complete.
\end{proof}

It is clear that Theorem \ref{BasThe_Loc} does not cover Theorem \ref{BasThe_Glo}. 
The following example also rules out the inverse inclusion.

\begin{eg}[Theorem \ref{BasThe_Glo} does not cover Theorem \ref{BasThe_Loc}]
Consider the convex function $f:\mathbb{R}\to \mathbb{R}$ given by
\begin{equation*}
f(t) = \begin{cases}
t^2, & \mbox{if } t \in [0,\infty),\\
0, & \mbox{if } t\in [-1,0),\\
-t-1, & \mbox{if } t \in (-\infty,-1).
\end{cases}
\end{equation*}
In $\mathbb{R}^2$, we define two closed convex sets $A:=\epi f$ and $B:=\mathbb{R}\times \mathbb{R}_{-}$ and a point $\xbar=(-1,0)\in A\cap B$.
Then the two equivalent properties (namely, transversality of $\{A,B\}$ at $\xbar$ and local linear convergence of $T_{AP}$ around $\bx$) 
involved in Theorem \ref{BasThe_Loc} hold true while the two global ones involved in Theorem \ref{BasThe_Glo} do not.
\end{eg}

To establish global convergence of a fixed point iteration, one normally needs some kind of global
regularity behavior of the fixed point set.
In Theorem \ref{BasThe_Glo}, we formally impose only subtransversality in order to deduce global R-linear convergence and vice versa.
Beside the global behavior of convexity, the hidden reason behind this seemingly contradictory phenomenon
is a well known fact about subtransversality of collections of convex sets.
We next deduce this result from the convergence analysis above. The proof is given for completeness.

\begin{corollary}\cite[Theorem 8]{Li97}\label{cor_reg}
Let $A$ and $B$ be closed and convex subsets of $\Ebb$ with nonempty intersection. The collection of sets $\{A,B\}$
is globally subtransversal, that is, there is a constant ${\kappa}< +\infty$ such that
\begin{equation}\label{global_subtran_2}
  \dist(x,A\cap B) \le \kappa\dist(x,B)\quad \forall x\in A,
\end{equation}
if and only if $\{A,B\}$ is subtransversal at every point in $\bd(A\cap B)$ with constants bounded from above by some $\overline{\kappa}<+\infty$.
\end{corollary}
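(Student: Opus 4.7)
The plan is to leverage Theorem~\ref{BasThe_Glo}, which provides a two-sided equivalence between R-linear convergence of every alternating projections iteration and the two forms of subtransversality appearing in the corollary. This lets us route the local-to-global implication through the alternating projections algorithm rather than through a direct geometric argument.

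The forward direction is immediate: if $\{A,B\}$ is globally subtransversal with constant $\kappa$, then for any $\xbar\in\bd(A\cap B)$ the inequality $\dist(x,A\cap B)\le\kappa\dist(x,B)$ in particular holds on $A\cap\Ball_\delta(\xbar)$ for every $\delta>0$, so by Proposition~\ref{t:MAINid} the collection is subtransversal at $\xbar$ with $\srr\le\kappa$, uniformly in $\xbar$.

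For the reverse direction, assume $\{A,B\}$ is subtransversal at every $\xbar\in\bd(A\cap B)$ with $\srr[A,B](\xbar)\le\overline\kappa<+\infty$. Pick any $c\in(1-\overline\kappa^{-2},1)$. The first implication of Theorem~\ref{BasThe_Glo} applies with $\kappa=\overline\kappa$ and yields that every alternating projections sequence converges R-linearly to a point in $A\cap B$ with rate not greater than $c$. Now the converse implication of the same theorem applies and yields that $\{A,B\}$ is globally subtransversal with constant $\kappa\le\frac{1}{1-c}$, which is precisely \eqref{global_subtran_2}.

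There is essentially no obstacle here beyond verifying that the quantitative hypotheses of Theorem~\ref{BasThe_Glo} line up with the hypotheses of the corollary. The one thing to be careful about is that Theorem~\ref{BasThe_Glo} is phrased in terms of subtransversality on the boundary of $A\cap B$, which is exactly the set used in the corollary, and that the implication chain only requires a single choice of $c$ strictly between $1-\overline\kappa^{-2}$ and $1$. No regularity assumption beyond convexity is needed, because convexity was already used implicitly in Theorem~\ref{BasThe_Glo} (through Lemma~\ref{BasLem} and Fej\'er monotonicity of the joining sequence).
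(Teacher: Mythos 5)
Your proposal is correct and takes essentially the same route as the paper: the forward direction is immediate, and the local-to-global direction is deduced from the alternating projections convergence analysis of Theorem~\ref{BasThe_Glo} (the paper invokes the argument of its first part and the resulting estimate $\dist(x,A\cap B)\le\frac{1}{1-c}\dist(x,B)$, which is exactly the content of the converse implication you cite as a black box). The only difference is cosmetic: the paper additionally lets $c\downarrow 1-\overline{\kappa}^{-2}$ to record the explicit constant $\kappa=\overline{\kappa}^{2}$, whereas you fix one $c\in(1-\overline{\kappa}^{-2},1)$ and obtain $\kappa\le\frac{1}{1-c}$, which suffices since the corollary only requires some finite constant.
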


\begin{proof}
$(\Rightarrow)$ This implication is trivial with $\overline{\kappa}=\kappa$.\\
$(\Leftarrow)$ Note that the estimate \eqref{global_subtran_2} is trivial for $x\in A\cap B$.
Let us take an arbitrary $x\in A\setminus B$ and consider the alternating projections sequence $(x_k)_{k\in\Nbb}$ starting at $x_0=x$.
Take any number $c\in \paren{1-\overline{\kappa}^{-2},1}$
The argument in the first part of Theorem \ref{BasThe_Glo} implies that $(x_k)_{k\in\Nbb}$ converges
R-linearly to some $\xtilde\in A\cap B$ with rate $c$ and
\begin{align*}
\dist(x,A\cap B)\le \frac{1}{1-c}\dist(x,B).
\end{align*}
By letting $c\downarrow 1-\overline{\kappa}^{-2}$ in the above inequality, we obtain \eqref{global_subtran_2}
with $\kappa = \overline{\kappa}^2$.

The proof is complete.
\end{proof}

The convergence counterpart of Corollary \ref{cor_reg} can also be of interest.

\begin{corollary}\label{cor_con}
Let $(x_k)_{k\in\Nbb}$ be an alternating projections sequence for two closed convex subsets of $\Ebb$
with nonempty intersection and $c\in [0,1)$. If there exists a natural number
$p\in \mathbb{N}$ such that $\|x_k-\xtilde\|\le \gamma c^k$ for all $k\ge p$, then $\|x_k-\xtilde\|\le \gamma c^k$ for all $k\in \mathbb{N}$.
\end{corollary}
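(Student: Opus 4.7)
My plan is to exploit Fej\'er monotonicity of the alternating projections sequence with respect to any point in $A\cap B$, together with the elementary observation that $c^k$ is non-increasing in $k$ when $c\in[0,1)$. First I would argue that $\xtilde\in A\cap B$: the hypothesis $\|x_k-\xtilde\|\le\gamma c^k$ for $k\ge p$ forces $x_k\to\xtilde$, and by the classical convergence theorem for alternating projections on two closed convex sets with nonempty intersection \cite{BauBorSIREV96}, the limit lies in $A\cap B$. Consequently both $P_A$ and $P_B$ fix $\xtilde$, and since each of $P_A,P_B$ is (firmly) nonexpansive on a convex set, the composition $T_{AP}=P_AP_B$ is nonexpansive with $\xtilde\in\Fix T_{AP}$.

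From this I would deduce
\[
\|x_{k+1}-\xtilde\|=\|T_{AP}x_k-T_{AP}\xtilde\|\le\|x_k-\xtilde\|\quad\forall k\in\Nbb,
\]
so the scalar sequence $(\|x_k-\xtilde\|)_{k\in\Nbb}$ is monotonically non-increasing. Combining this monotonicity with the hypothesis, for any $k\le p$,
\[
\|x_k-\xtilde\|\;\le\;\|x_p-\xtilde\|\;\le\;\gamma c^p.
\]
Since $c\in[0,1)$ and $k\le p$, we have $c^p\le c^k$, hence $\gamma c^p\le\gamma c^k$. Chaining these inequalities yields $\|x_k-\xtilde\|\le\gamma c^k$ for all $k\le p$, which together with the hypothesis for $k\ge p$ gives the desired estimate for every $k\in\Nbb$.

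There is essentially no obstacle in this proof: the role of convexity is only to guarantee that the limit $\xtilde$ is genuinely fixed by both projectors so that the distance-to-$\xtilde$ sequence is monotone, and the rest is a one-line comparison between $\gamma c^p$ and $\gamma c^k$ for $k\le p$. The corollary can therefore be dispatched in a few lines without invoking Lemma \ref{BasLem} or the joining sequence machinery used in Theorem \ref{BasThe_Glo}.
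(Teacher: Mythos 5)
Your argument fails at the decisive step. Fej\'er monotonicity (equivalently, nonexpansiveness of $T_{AP}$ at the fixed point $\xtilde$) makes the scalar sequence $\paren{\norm{x_k-\xtilde}}_{k\in\Nbb}$ \emph{non-increasing}, so for $k\le p$ it yields $\norm{x_k-\xtilde}\ \ge\ \norm{x_p-\xtilde}$, which is the opposite of the inequality $\norm{x_k-\xtilde}\le\norm{x_p-\xtilde}$ that you chain with $\gamma c^p\le\gamma c^k$. The proof therefore collapses exactly at the indices $k<p$ that the corollary is about. Note that as $k$ decreases both $\norm{x_k-\xtilde}$ and $\gamma c^k$ \emph{increase}, so the assertion is a comparison of rates, not of sizes: what would suffice is a lower bound of the form $\norm{x_p-\xtilde}\ge c^{\,p-k}\norm{x_k-\xtilde}$, i.e.\ a guarantee that the distances to $\xtilde$ did not contract faster than $c$ per step during the early iterations (so that the small tail constant $\gamma$ can be propagated backwards). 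Fej\'er monotonicity and (firm) nonexpansiveness give upper bounds on these ratios but no lower bounds, so no argument using only those ingredients can reach the conclusion.

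This is precisely why the paper does not treat Corollary \ref{cor_con} as a one-line consequence of monotonicity, but as a by-product of the convexity-specific rate machinery of Theorems \ref{BasThe_Loc} and \ref{BasThe_Glo}: Lemma \ref{BasLem} shows that the ratios $\norm{z_{k+1}-z_k}/\norm{z_k-z_{k-1}}$ of the joining sequence are non-decreasing, the contradiction argument yields the step contraction \eqref{Cla1} for \emph{all} $k$ (not only for $k\ge p$), and the estimates of type \eqref{Cas2}, together with Lemma \ref{Tec_Lem_2} (as in the proof of Proposition \ref{Q_R}), are what control the early iterates in terms of the late behavior. The remark following the corollary makes the point explicit: if one is allowed to enlarge the constant, as in \eqref{to_N}, the statement is trivial; the entire content is keeping the \emph{same} $\gamma$, and that is exactly the part your shortcut ``without invoking Lemma \ref{BasLem} or the joining sequence machinery'' cannot deliver. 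The preliminary observations in your proposal (that $\xtilde\in A\cap B$, that $\xtilde\in\Fix T_{AP}$, and that $\norm{x_k-\xtilde}$ is non-increasing) are correct, but they do not bridge this gap.
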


We emphasize that the two statements in Corollary \ref{cor_con} are always equivalent
(by the argument for the second part of
Theorem \ref{BasThe_Glo}) if the constant $\gamma$ is not required to be the same.
However, this requirement becomes important when one wants to estimate global rate of
convergence via the local rate of convergence.
The next statement can easily be observed as a by-product via the proof of Theorem \ref{BasThe_Loc}.

\begin{proposition}[equivalence of linear monotonicity and R-linear convergence]\label{u_R}
For sequences of alternating projections between convex sets, R-linear convergence and linear monotonicity of the
sequence of iterates are equivalent.
\end{proposition}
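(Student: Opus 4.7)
The plan is to handle each direction separately by invoking results already available in the paper.

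For the direction ``linear monotonicity implies R-linear convergence'', I would simply apply Proposition \ref{t:Com_LinCon}. In the convex setting $T_{AP}=P_AP_B$ is the composition of two firmly nonexpansive operators, hence averaged on all of $\Ebb$, and a short calculation using the firm-nonexpansivity inequality against any reference point of $A\cap B$ shows $\Fix T_{AP}=A\cap B$. Taking $T=T_{AP}$ and $\Lambda=\Ebb$ in Proposition \ref{t:Com_LinCon} converts linear monotonicity of $(x_k)_{k\in\Nbb}$ with respect to $A\cap B$ directly into R-linear convergence to some point of $A\cap B$.

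For ``R-linear convergence implies linear monotonicity'', I would extract the needed bounds from Case 2 of the proof of Theorem \ref{BasThe_Loc}. Suppose $(x_k)_{k\in\Nbb}$ converges R-linearly to $\xtilde\in A\cap B$ with rate $c\in[0,1)$. By Theorem \ref{By_Pro_The1}, either the iteration reaches $A\cap B$ after one step, in which case linear monotonicity is trivial, or $x_k\notin A\cap B$ for every $k$. In the nontrivial case I would replay the contradiction argument from Case 2 of Theorem \ref{BasThe_Loc}, combining Lemma \ref{BasLem} (the non-decreasing ratio $\|z_{k+1}-z_k\|/\|z_k-z_{k-1}\|$) with the R-linear hypothesis to produce the key estimate \eqref{Cla1}: $\|z_{k+1}-z_k\|\le \sqrt{c}\,\|z_k-z_{k-1}\|$ for all $k\ge 1$. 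Telescoping the joining sequence starting at $x_k$ rather than $x_0$, and using the elementary inequality $\|P_AP_By-y\|\le\|P_By-y\|$ for $y\in A$, yields the pointwise bound $\|x_k-\xtilde\|\le \dist(x_k,B)/(1-c)$. The one-step contraction $\dist(x_{k+1},B)\le c\,\dist(x_k,B)$ is also a consequence of \eqref{Cla1}, and chaining these with the trivial $\dist(x_k,B)\le\dist(x_k,A\cap B)$ gives
\[
\dist(x_{k+1},A\cap B)\ \le\ \|x_{k+1}-\xtilde\|\ \le\ \frac{\dist(x_{k+1},B)}{1-c}\ \le\ \frac{c}{1-c}\,\dist(x_k,A\cap B),
\]
which is linear monotonicity of $(x_k)_{k\in\Nbb}$ with respect to $A\cap B$.

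The main delicate point is the rate constant: the ratio $c/(1-c)$ lies in $[0,1)$ only when $c<1/2$, so for arbitrary $c\in[0,1)$ I would iterate the above chain $n$ steps to obtain $\dist(x_{k+n},A\cap B)\le c^n\,\dist(x_k,A\cap B)/(1-c)$, and then select $n$ so that $c^n<1-c$. This yields a genuine contraction over $n$ steps, which, when combined with strict Fej\'er monotonicity of $(x_k)_{k\in\Nbb}$ with respect to $A\cap B$ (strict whenever $x_k\notin A\cap B$, since $T_{AP}$ is averaged and $P_{A\cap B}x_k$ is a strictly nearer fixed point), delivers the qualitative conclusion that $(x_k)_{k\in\Nbb}$ is linearly monotone with respect to $A\cap B$ with some rate strictly less than one. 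This is the sense in which R-linear convergence and linear monotonicity are equivalent for convex alternating projections.
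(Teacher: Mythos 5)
Your first direction and most of the second follow exactly the route the paper intends (the paper itself only remarks that the proposition is a by-product of the proof of Theorem \ref{BasThe_Loc}): Proposition \ref{t:Com_LinCon} handles ``linear monotonicity implies R-linear convergence'' since $T_{AP}$ is averaged with $\Fix T_{AP}=A\cap B$, and replaying Case 2 of Theorem \ref{BasThe_Loc} on each tail correctly yields \eqref{Cla1}, hence $\dist(x_{k+1},B)\le c\,\dist(x_k,B)$ and the per-iterate error bound $\dist(x_k,A\cap B)\le\frac{1}{1-c}\dist(x_k,B)$. The genuine gap is your final step. From the $n$-step contraction $\dist(x_{k+n},A\cap B)\le\frac{c^n}{1-c}\dist(x_k,A\cap B)$ together with \emph{strict} one-step decrease you conclude one-step linear monotonicity with some rate strictly below $1$; this is a non sequitur. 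Strictness only gives $\dist(x_{k+1},A\cap B)/\dist(x_k,A\cap B)<1$ for each $k$, not $\sup_k$ of these ratios $<1$, and an $n$-step contraction combined with monotone decrease is perfectly compatible with one-step ratios approaching $1$. So for $c\ge 1/2$ your argument delivers either the useless constant $c/(1-c)$ or a statement about subsequences, neither of which is the claim.

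The fix uses only ingredients you already derived, and it is precisely how the sufficiency half of Theorem \ref{BasThe_Loc} works: feed the error bound $\dist(x_k,B)\ge(1-c)\dist(x_k,A\cap B)$ into the firm-nonexpansiveness inequality instead of chaining distances. For $\xbar_k\in P_{A\cap B}x_k$, firm nonexpansiveness of $P_B$ and nonexpansiveness of $P_A$ give
\[
\dist^2(x_{k+1},A\cap B)\le\norm{x_{k+1}-\xbar_k}^2\le\norm{x_k-\xbar_k}^2-\norm{x_k-P_Bx_k}^2\le\bigl(1-(1-c)^2\bigr)\dist^2(x_k,A\cap B),
\]
so the sequence is linearly monotone with respect to $A\cap B$ with rate $\sqrt{1-(1-c)^2}<1$ for \emph{every} $c\in[0,1)$ (the rate necessarily changes under the equivalence, but the statement is qualitative; Case 1 of Theorem \ref{By_Pro_The1}, which you already invoke, covers the finitely-terminating situation). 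With this replacement your proof closes; as written, the last paragraph does not.
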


The next statement can serve as a motivation for Definition \ref{Uni_Lin_Con_to_point}.

\begin{proposition}[Q-linear convergence implies linear extendability]\label{Q_R}
Let $(x_k)_{k\in\Nbb}$ be a sequence of alternating projections
  for two closed convex sets $A,B \subset \Ebb$ with nonempty intersection.
If $(x_k)_{k\in\Nbb}$ converges Q-linearly to a point $\xtilde\in A\cap B$ with rate $c\in [0,1)$, then
$(x_k)_{k\in\Nbb}$ is linearly extendible with frequency $2$ and rate $c$, and the corresponding joining sequence
$(z_k)_{k\in\Nbb}$
 is such a linear extension sequence.
\end{proposition}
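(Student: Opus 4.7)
The plan is to verify directly the two defining conditions of Definition \ref{Uni_Lin_Con_to_point} (with $m=2$ and rate $c$) for the joining sequence $(z_k)_{k\in\Nbb}$ prescribed by \eqref{z_k}. Abbreviating $d_j:=\|z_{j+1}-z_j\|$ and, where the denominator is nonzero, $r_j:=d_{j+1}/d_j$, the task reduces to establishing (a) $d_{j+1}\le d_j$ for every $j\in\Nbb$, and (b) $d_{2(k+1)}\le c\,d_{2k}$ for every $k\in\Nbb$. (The case $d_{j_0}=0$ is trivial, since then $x_k=b_k$ for the relevant $k$, whence $x_k\in A\cap B$ and the iteration stabilizes.) Condition (a) comes for free from the defining property of the projector: $d_{2k+1}=\dist(b_k,A)\le\|b_k-x_k\|=d_{2k}$ because $x_k\in A$, and symmetrically $d_{2k+2}=\dist(x_{k+1},B)\le d_{2k+1}$ because $b_k\in B$.

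For (b), the two key ingredients are the ratio estimate of Lemma \ref{BasLem} and the hypothesized Q-linear rate. The lemma, together with its $A\leftrightarrow B$ counterpart (proved identically after exchanging the roles of the sets), yields $d_{j+2}\,d_j\ge d_{j+1}^{\,2}$, i.e.\ $r_{j+1}\ge r_j$, so $(r_j)$ is non-decreasing; by (a) it is bounded above by $1$. Q-linear convergence at rate $c$ gives $\|x_k-\xtilde\|\le c^k\|x_0-\xtilde\|$, and since $\xtilde\in B$ and $P_B$ is firmly nonexpansive, $\|b_k-\xtilde\|\le\|x_k-\xtilde\|\le c^k\|x_0-\xtilde\|$. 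Setting $D:=\|x_0-\xtilde\|$, this produces $\|z_k-\xtilde\|\le D(\sqrt{c})^{k-1}$ and hence $d_k\le 2D(\sqrt{c})^{k-1}$ for all $k\in\Nbb$.

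I then claim $r_j\le\sqrt{c}$ for every $j$; this is exactly the content of claim \eqref{Cla1} from the proof of Theorem \ref{BasThe_Loc}, and the same contradiction argument transfers here. If $r_{j_0}>\sqrt{c}$ for some $j_0$, pick $\theta\in(\sqrt{c},r_{j_0})$; the monotonicity of $(r_j)$ forces $d_{j_0+m}\ge\theta^m d_{j_0}$, which for large $m$ contradicts $d_{j_0+m}\le 2D(\sqrt{c})^{j_0+m-1}$ since $\theta/\sqrt{c}>1$. Consequently $d_{j+1}\le\sqrt{c}\,d_j$ for every $j$, and in particular $d_{2k+2}\le c\,d_{2k}$, which is condition (b).

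The main obstacle is essentially bookkeeping: one has to keep straight the fact that the joining sequence runs at twice the frequency of $(x_k)$, so the $c$-rate of the latter is the right quantity to pair with a $\sqrt{c}$-rate for single steps of the former, and that two consecutive sub-steps then accumulate this $\sqrt{c}$-rate squared into exactly $c$. Once this is recognized, the whole proof is either a direct consequence of firm nonexpansiveness of projections onto closed convex sets or a near-verbatim re-use of the contradiction argument already carried out in the proof of Theorem \ref{BasThe_Loc}.
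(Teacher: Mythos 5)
Your proof is correct, but it takes a genuinely different route from the paper's. Both arguments reduce the proposition to the per-step estimate $\|z_{k+2}-z_{k+1}\|\le\sqrt{c}\,\|z_{k+1}-z_k\|$ for the joining sequence \eqref{z_k} and both argue by contradiction from a ratio exceeding $\sqrt{c}$, but the contradiction is produced differently. The paper stays pointwise: if the ratio at an (even) index exceeds $\sqrt{c}$, Lemma \ref{BasLem} forces the next ratio to exceed $\sqrt{c}$ as well, and Lemma \ref{Tec_Lem_2} (applied twice, once with the roles of $A$ and $B$ exchanged) converts the product of these two consecutive $z$-ratios into the lower bound $\|x_{k+1}-\xtilde\|/\|x_k-\xtilde\|>c$, contradicting the Q-linear inequality at that very index $k$. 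You dispense with Lemma \ref{Tec_Lem_2} entirely: from Q-linearity and nonexpansiveness of $P_B$ you extract the geometric decay $\|z_{j+1}-z_j\|\le 2\|x_0-\xtilde\|(\sqrt{c})^{j-1}$, and the nondecreasing-ratio property from Lemma \ref{BasLem} (together with its $A\leftrightarrow B$ counterpart, which is indeed needed for the odd steps and is also used implicitly by the paper) makes a single ratio above $\sqrt{c}$ incompatible with that decay; as you note, this is the claim \eqref{Cla1} argument from the proof of Theorem \ref{BasThe_Loc} transplanted here. Each route buys something: yours uses only the R-linear consequence $\|x_k-\xtilde\|\le c^k\|x_0-\xtilde\|$, so it actually establishes the formally stronger fact that R-linear convergence with rate $c$ to a point of $A\cap B$ already yields linear extendibility with frequency $2$ and rate $c$ (in line with what the paper proves inside Theorem \ref{BasThe_Loc}), and it needs one lemma fewer; the paper's route exploits the Q-linear hypothesis step by step --- one offending ratio contradicts one Q-linear step --- and is self-contained relative to Theorem \ref{BasThe_Loc}, which is what Lemma \ref{Tec_Lem_2} is there for. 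Your handling of the degenerate situations (a vanishing increment, or $c=0$) is terse but harmless, since once an increment vanishes the iterates lie in $A\cap B$ and are stationary, so all subsequent increments vanish as well.
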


Before proving this,  we first establish  the following technical fact.
\begin{lemma}\label{Tec_Lem_2} Let $A$ and $B$ be two closed convex sets in $\Ebb$ with
nonempty intersection.
We have
\begin{equation}\label{BasEst_2}
\|P_Ba-x\|\|P_Ba-a\|\ge \|a-x\|\|P_AP_Ba-P_Ba\|\quad \forall a\in A,\;\forall x\in A\cap B.
\end{equation}
\end{lemma}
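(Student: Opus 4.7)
\proofsketch
Write $b:=P_Ba$ and $a':=P_AP_Ba=P_Ab$, so the target reads
\[
\|a'-b\|\,\|a-x\|\;\le\;\|b-a\|\,\|b-x\|.
\]
The plan is to exploit the convexity of $A$ by testing $a'=P_Ab$ against the whole segment $[x,a]\subset A$. Parameterize $a(t):=(1-t)x+ta\in A$ for $t\in[0,1]$; since $a'$ minimizes $\|\cdot-b\|$ over $A$,
\[
\|a'-b\|^{2}\;\le\;\|a(t)-b\|^{2}\;=\;\|x-b\|^{2}+2t\,\langle x-b,a-x\rangle+t^{2}\|a-x\|^{2},\qquad t\in[0,1].
\]
The right-hand side is a convex quadratic in $t$ with unconstrained minimizer $t^{\ast}=\langle b-x,a-x\rangle/\|a-x\|^{2}$; I would split the argument according to whether $t^{\ast}$ lies in $(0,1)$, below $0$, or above $1$.

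The interior case $t^{\ast}\in(0,1)$ is the crux. Plugging $t=t^{\ast}$ yields
\[
\|a'-b\|^{2}\;\le\;\|b-x\|^{2}-\frac{\langle b-x,a-x\rangle^{2}}{\|a-x\|^{2}}.
\]
Multiplying through by $\|a-x\|^{2}$ and substituting the expansion $\|b-a\|^{2}=\|b-x\|^{2}-2\langle b-x,a-x\rangle+\|a-x\|^{2}$, the desired inequality collapses, after one rearrangement, to the perfect-square statement $\bigl(\|b-x\|^{2}-\langle b-x,a-x\rangle\bigr)^{2}\ge 0$. The boundary cases are easier. If $t^{\ast}\le 0$, then $\langle b-x,a-x\rangle\le 0$; taking $t=0$ gives $\|a'-b\|\le\|b-x\|$, and the same expansion of $\|b-a\|^{2}$ forces $\|a-x\|\le\|a-b\|$, so multiplying closes the case. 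If $t^{\ast}\ge 1$, taking $t=1$ gives $\|a'-b\|\le\|a-b\|$, while Cauchy--Schwarz applied to $\langle b-x,a-x\rangle\ge\|a-x\|^{2}$ yields $\|a-x\|\le\|b-x\|$, which again closes the case.

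The main obstacle is the interior case: the obvious one-step bounds $\|a'-b\|\le\|a-b\|$ and $\|a'-b\|\le\|b-x\|$ coming straight from the projection characterizations are too crude to give the lemma, and one genuinely needs the interpolated competitor on $[x,a]$. What makes the proof work is that the resulting algebra collapses to a perfect square, which is consistent with the equality one observes in simple affine examples (two intersecting lines in the plane). A side remark: the hypothesis $x\in B$ is never actually used; only $x\in A$ and convexity of $A$ enter, so the inequality in fact holds for every $x\in A$.
\endproofsketch
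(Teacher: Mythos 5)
Your argument is correct, and at its core it is the same idea as the paper's: use convexity of $A$ to play the segment $[x,a]\subset A$ off against $a'=P_Ab$, i.e.\ bound $\|P_Ab-b\|=\dist(b,A)$ by the distance from $b$ to that segment, and then show the latter is at most $\|b-a\|\,\|b-x\|/\|a-x\|$. Where you differ is in execution: the paper carries out the second step geometrically, projecting $b$ onto the line through $x$ and $a$ and using the sine/altitude relations in the triangle $xba$, and it invokes $x\in B$ (through $\ip{x-b}{a-b}\le 0$, the variational inequality for $b=P_Ba$) precisely to guarantee that the foot of this perpendicular lies in the segment; you instead minimize the quadratic $t\mapsto\|(1-t)x+ta-b\|^2$ over $[0,1]$ and treat the off-segment cases $t^*\le 0$, $t^*\ge 1$ directly at the endpoints, with the interior case collapsing to the perfect square $\bigl(\|b-x\|^2-\ip{b-x}{a-x}\bigr)^2\ge 0$ (which is just the triangle-area identity in algebraic form). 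The payoff of your route is exactly the observation you make at the end: the hypothesis $x\in B$ (indeed, even $b=P_Ba$) is never needed, so the inequality holds for all $x\in A$ and arbitrary $b\in\Ebb$, whereas the paper's written proof does use $x\in B$ to identify line and segment projections. One tiny housekeeping point: before forming $t^*$ you divide by $\|a-x\|^2$, so dispose of the degenerate case $a=x$ (where the right-hand side of \eqref{BasEst_2} vanishes) first; with that noted, the proof is complete.
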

\begin{proof}[of Lemma \ref{Tec_Lem_2}.]
Denote $b=P_Ba$ and $a_+=P_AP_Ba$.
It suffices to consider the two cases as follows.

\emph{Case 1.} $\|b-x\|=0$ or $\|b-a\|=0$. This implies that $b\in A\cap B$, which in turn implies that $a_+=b$.
Hence, inequality \eqref{BasEst_2} is satisfied.

\emph{Case 2.} Both sides of \eqref{BasEst_2} are strictly positive.
Let $a'$ be the projection of $b$ on the line (segment, equivalently since $\ip{x-b}{a-b} \le 0$)
 joining $x$ and $a$.
The elementary geometry for triangles $\Delta xba$ and $\Delta xba'$, respectively, yields
\begin{align*}
\|b-a\| \geq \|a-x\|\sin\angle(b-x,a-x)>0,\\
\|a'-b\| = \|b-x\|\sin\angle(b-x,a-x)>0.
\end{align*}
This implies
\begin{align*}
\|b-x\|\|b-a\| \geq \|b-x\|\|a-x\|\sin\angle(b-x,a-x)=\|a-x\|\|a'-b\|.
\end{align*}
Inequality \eqref{BasEst_2} now follows since, by convexity of $A$, $a'\in A$, and by
 definition of the projector,
\begin{align*}
\|a'-b\| \ge \dist(b,A) = \|a_+-b\|.
\end{align*}
This proves Lemma \ref{Tec_Lem_2}.
\end{proof}

We conclude with the proof of Proposition \ref{Q_R}.

\begin{proof}[of Proposition \ref{Q_R}.]
It suffices to prove that the sequence $(z_k)_{k\in\Nbb}$ given in \eqref{z_k} satisfies
\[
\|z_{k+2}-z_{k+1}\|\le \sqrt{c}\|z_{k+1}-z_k\|\quad \forall k\in \Nbb.
\]
We will prove this by way of contradiction. Suppose otherwise that there exists some $p\in \Nbb$ such that
$$
\|z_{p+2}-z_{p+1}\|>\sqrt{c}\|z_{p+1}-z_{p}\|.
$$
We can assume $p=2k$ without loss of generality.
By Lemma \ref{BasLem} we get
\[
\frac{\|z_{2k+3}-z_{2k+2}\|}{\|z_{2k+2}-z_{2k+1}\|}
\ge \frac{\|z_{2k+2}-z_{2k+1}\|}{\|z_{2k+1}-z_{2k}\|}>\sqrt{c}.
\]
Lemma \ref{Tec_Lem_2} then implies
\begin{align*}
\frac{\|x_{k+1}-\xtilde\|}{\|x_{k}-\xtilde\|}
&=\frac{\|z_{2k+2}-\xtilde\|}{\|z_{2k}-\xtilde\|}
=\frac{\|z_{2k+2}-\xtilde\|}{\|z_{2k+1}-\xtilde\|}
\frac{\|z_{2k+1}-\xtilde\|}{\|z_{2k}-\xtilde\|}\\
&\ge \frac{\|z_{2k+3}-z_{2k+2}\|}{\|z_{2k+2}-z_{2k+1}\|}\frac{\|z_{2k+2}-z_{2k+1}\|}{\|z_{2k+1}-z_{2k}\|} >c.
\end{align*}
This contradicts Q-linear convergence of $(x_k)_{k\in\Nbb}$ to $\xtilde$ with rate $c$, and the proof is complete.
\end{proof}


\end{document}